\DeclareMathOperator*{\esssup}{ess\,sup}
\newcommand{\R}{\mathbb{R}}
\newcommand{\N}{\mathbb{N}}
\renewcommand{\d}{{\mathrm d}}
\renewcommand{\div}{\mathrm{div}}
\newcommand{\defeq}{\vcentcolon=}
\newcommand{\eqdef}{=\vcentcolon}
\DeclareRobustCommand\onedot{\futurelet\@let@token\@onedot}
\def\@onedot{\ifx\@let@token.\else.\null\fi\xspace}
\def\eg{\emph{e.g}\onedot} 
\def\ie{\emph{i.e}\onedot} 
\def\cf{\emph{cf}\onedot}
\newcommand{\info}[1]{\todo[inline, color = yellow, disable]{#1}}
\newcommand{\notinclude}[1]{}
\newtheorem{theorem}{Theorem}[section]
\newtheorem{lemma}[theorem]{Lemma}
\newtheorem{proposition}[theorem]{Proposition}
\newtheorem{corollary}[theorem]{Corollary}
\newtheorem{assumption}[theorem]{Assumption}
\theoremstyle{definition}
\newtheorem{remark}[theorem]{Remark}
\title{On MAP estimates and source conditions for drift identification in SDEs}
\author[1]{Daniel Tenbrinck}
\author[2]{Nikolas Uesseler\footnote{Corresponding author:  nikolas.uesseler@uni-muenster.de,
	    phone +49\,251\,83-30410,
	    fax +49\,251\,83-32729}}
\author[3]{Philipp Wacker}
\author[2]{Benedikt Wirth}
\affil[1]{Department of Data Science (DDS), Friedrich-Alexander-Universit\"at Erlangen-N\"urnberg, N\"urnberger Str.\ 74, 91058 Erlangen}
\affil[2]{Applied Mathematics M\"unster, University of M\"unster, Einsteinstr.\ 62, 48149 M\"unster}
\affil[3]{Department of Mathematics and Statistics, University of Canterbury, Private Bag 4800, Christchurch 8140, New Zealand}
\date{}
\begin{document}
	\maketitle
	
\begin{abstract}
We consider the inverse problem of identifying the drift in an SDE from $n$ observations of its solution at $M+1$ distinct time points.
We derive a corresponding MAP estimate, we prove differentiability properties as well as a so-called tangential cone condition for the forward operator,
and we review the existing theory for related problems, which under a slightly stronger tangential cone condition would additionally yield convergence rates for the MAP estimate as $n\to\infty$.
Numerical simulations in 1D indicate that such convergence rates indeed hold true.
\end{abstract}
%% maketitle must follow the abstract.
\maketitle                   % Produces the title.

\section{Introduction}
We are considering the following problem.
At fixed time points $0=t_0< t_1< \ldots< t_M = T$ we observe the position of $n$ distinguishable particles which stochastically move around in some bounded smooth domain $\Omega\subset\R^d$.
From this observation we aim to estimate the drift $\mu:\Omega\to\R^d$ in the stochastic differential equation (SDE)
\begin{equation}\label{eqn:SDE}
\d X_t=\mu(X_t)\d t+\sigma\d W_t
\end{equation}
that governs the motion of each particle ($W_t$ denotes the Wiener process with reflection at $\partial\Omega$ and $\sigma>0$ a known coefficient;
furthermore we assume the normal component $\mu\cdot\nu$ of the drift to vanish on the boundary $\partial\Omega$).
Our aim is to develop and analyse a corresponding Bayesian maximum a posteriori (MAP) estimate.

An example application for which this inverse problem is relevant is the probing of tissue properties in zebrafish embryos:
During embryonic development, so-called primordial germ cells (PGCs) migrate from the location of their differentiation to the site where the gonads develop.
However, if directional migration is abolished in those cells by genetic modification, they migrate randomly within the embryo \cite{Gross-Thebing}.
The drift $\mu$ then indicates areas of in-/decreased cell attraction within the embryo (\cf \cref{fig:embryo}).
\begin{figure}[h]
    \centering
    \includegraphics[width=0.475\textwidth,trim=0 30 0 50,clip]{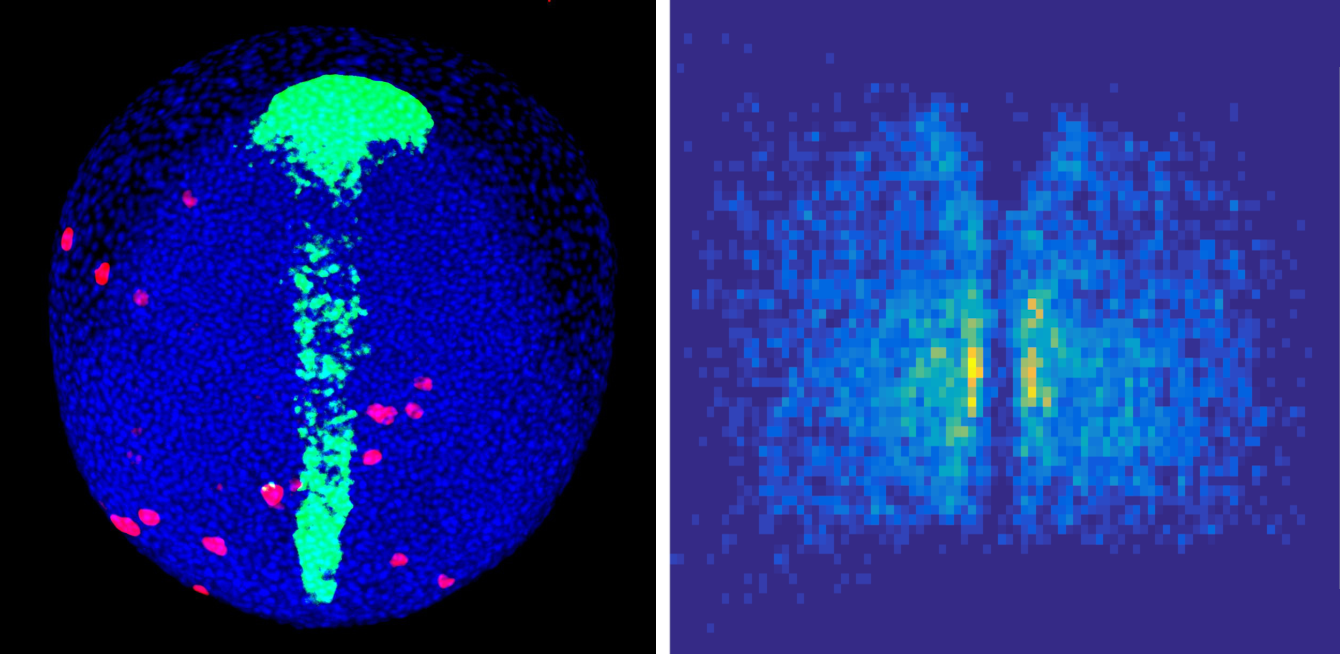}
    \caption{Left: Epifluorescence microscopy data of a zebrafish embryo (PGCs in red). Right: PGC distribution after registration of 934 embryos reveals anatomical structures as barrier for cells \cite{Gross-Thebing}.}
    \label{fig:embryo}
\end{figure}

As usual, our MAP estimate will be the minimizer of the (slightly modified) log-posterior
\begin{equation*}
\mu\mapsto S_\tau(F(\mu),G^n)+\alpha\|\mu\|_{H^r}^2,
\end{equation*}
where $G^n$ is the empirical measure of the observed particles, the forward operator $F$ yields the probability density of particle observations under a given drift $\mu$,
$S$ is the cross-entropy and $S_\tau$ a shifted version, and $\alpha>0$ a regularization parameter.
The parameter identification being a nonlinear inverse problem, we are interested in
whether or under what conditions one can obtain rates for how fast the minimizer of the above functional converges to the ground truth $\mu^\dagger$
as the particle number $n$ increases to infinity and the regularization weight $\alpha$ is decreased correspondingly.
To this end we extensively review the existing theory and perform first steps to applying this theory to our setting:
\begin{itemize}
\item
We give an overview of the historical development of source and nonlinearity conditions for nonlinear inverse problems,
culminating in variational source conditions that are particularly pertinent to our framework (\cref{sec:history,sec:generalizedFidelity}).
\item
We next present the forward operator relevant to our study and derive the corresponding log-likelihood for the associated inverse problem (\cref{sec:forwardOp}).
\item
The following sections review theoretical results from \cite{Dunker-Hohage} (which considered the stationary version of our inverse problem) on stochastic convergence rates in the context of stochastic inverse problems,
highlighting how these (albeit non-explicit) rates may be established using the variational source conditions previously discussed and tailored to our specific setup (\cref{sec:probabilisticRate,sec:reduction}).
\item
Within this framework, the variational source conditions can be reduced to a tangential cone condition.
In the penultimate section, we prove a weaker form of this condition; although it does not suffice to guarantee convergence rates, it nevertheless provides supporting evidence that such rates may hold (\cref{sec:tangentialCone}).
\item
We conclude by presenting numerical experiments that illustrate and support our theoretical findings (\cref{sec:numerics}).
\end{itemize}

%relation between the SDE and the Fokker Planck equation, heuristic explanation of the result of dunker and hohage in the stationary case, heuristic explantion of the new result, structure of the paper and notation.
% section 2: general tikhonov functional, general operator
% section 3: presenting our tikhonov functional and our operator
% section 4: Convergence rate for specific tikhonov functional, general operator whose images are probability kernels, and under the assumption of the variational source condition.
%section 5: Convergence rate for specific tikhonov functional and specific operator
%%---------------------------------------------------------------------------------------------------
\section{Historical note on convergence rates for nonlinear inverse problems}\label{sec:history}
In their 1988 paper \cite{Seidman-Vogel}, Seidmann and Vogel extended the theory of ill-posed problems to the case of nonlinear operators $F:X\to Y$.
As an approximate solution to $F(x)=y^\delta$ for some noisy measurement $y^\delta$ they consider the minimizer $x_\alpha^\delta$ of the Tikhonov functional
\begin{equation*}
	T_{\alpha}^{y^\delta}(x) \defeq \|F(x)-y^\delta\|^2 + \alpha  \|x - x^\star\|^2  \quad \text{for } x \in X\text{ and some fixed }x^{\star}\in X
\end{equation*}
and study its existence, stability, and convergence to the ground truth $x^\dagger=F^{-1}(y^\dagger)$ for $y^\delta\to y^\dagger$ and an appropriate choice of the regularization parameter $\alpha>0$.
%These are results of great theoretical importance but in practical application it is as important to have information about the speed of the convergence.
In 1989, Engl, Kunisch, and Neubauer then published the first result on corresponding convergence rates \cite{Engl-Kunisch-Neubauer},
for which one needs to impose so-called source conditions on the ground truth.
In this section we give a historical account on the development of these ideas to the later employed so-called variational source condition.

The convergence of Tikhonov regularization can be arbitrarily slow without a source condition, the concept of which we briefly motivate: Assume that $x^\dagger$ is an $x^\star$-minimum norm solution, \ie a solution of
\begin{equation}\label{eqn:minimumNormSolution}
	\text{minimize}\qquad  \tfrac{1}{2}\|x^\star - x \|^2 \qquad  \text{subject to}\qquad F(x) = y^\dagger.
\end{equation}
Naturally we can consider the Lagrangian
\begin{equation*} \label{eqn:ClassicalLagrangian}
	L(\omega, x) = \tfrac{1}{2}\|x^\star - x \|^2 + \langle\omega, F(x) - y^\dagger\rangle
\quad\text{for }x\in X,\ \omega \in Y^*.
\end{equation*}
% for $\langle\cdot,\cdot\rangle$ the respective dual pairing.
Let us assume that $F$ is Fréchet-differentiable and that strong duality holds, \ie if $x^\dagger \in X$ minimizes \eqref{eqn:minimumNormSolution} then there exists $\hat{\omega} \in Y^*$ such that $(\hat{\omega}, x^\dagger)$ is a saddle point of $L$. In particular, under this assumption it holds
\begin{equation*}
	\tfrac{\partial}{\partial x} L(\hat{\omega},x)|_{x^\dagger} = 0
\qquad\text{or equivalently}\qquad
	x^\star - x^\dagger = F'(x^\dagger)^\# \hat{\omega},
\end{equation*}
understood as an equality in $X^*$.
For instance, in the case of $x^\star = 0$ and linear compact $F$ between Hilbert spaces $X,Y$ we arrive at the well-known source condition $x^\dagger = F^\# \hat\omega = (F^\#F)^\frac{1}{2}p$ for some $p\in X$, which is to be understood as imposed regularity of $x^\dagger$ measured in terms of $F$. Thus, this regularity assumption is generalized by the assumption of strong duality, which, for the general nonlinear problem, indeed allows to achieve convergence rates. Let us summarize the result from \cite{Engl-Kunisch-Neubauer}.

\begin{assumption}[Conditions for convergence rates I, \cite{Engl-Kunisch-Neubauer}] \label{assumption Neubauer}
\begin{enumerate}
\item
	Let $X,Y$ be Hilbert spaces and $F:X \to Y$ Fréchet-differentiable.% with convex domain $\mathcal{D}(F)$.
\item
	Let $x^\dagger$ be an $x^\star$-minimum norm solution that fulfils the following \emph{source condition}: There exists $\omega \in Y$ satisfying
	\begin{equation*}
		x^\dagger - x^\star = F'(x^\dagger)^\# \omega.
	\end{equation*}
\item
	Let $F'$ fulfil the following \emph{Lipschitz condition}: There exists $L<1/\|\omega\|$ with
	\begin{equation*}
		\| F'(x^\dagger) - F'(x) \| \leq L \| x^\dagger - x \| \qquad\text{ for all } x \in X.
	\end{equation*}
% 	The Lagrange multiplier $\omega$ is small enough, namely
% 	\begin{equation*}
% 		L \| \omega \| < 1.
% 	\end{equation*}
\end{enumerate}
\end{assumption}

\begin{theorem}[Convergence rate I, \protect{\cite[Thm.\,2.4]{Engl-Kunisch-Neubauer}}] \label{Convergence rates Neubauer}
	Let assumption \ref{assumption Neubauer} hold and %let $y^\delta \in Y$ be measurements of $y^\dagger$ with
  $\| y^\delta - y^\dagger \| \leq \delta$.
  %Let as before $x_\alpha^\delta$ be the minimizer of the Tikhonov functional $T_\alpha^\delta$.
	Then for the choice $\alpha \sim \delta$ it holds 
	$
		\|x_\alpha^\delta - x^\dagger \| \lesssim \sqrt{\delta}.
	$
\end{theorem}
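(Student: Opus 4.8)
The plan is to exploit the minimality of $x_\alpha^\delta$ through a single comparison inequality and then to convert the resulting cross term into a controllable quantity via the source condition. First I would test the Tikhonov functional at the true solution: since $x_\alpha^\delta$ minimizes $T_\alpha^{y^\delta}$ and $F(x^\dagger)=y^\dagger$ with $\|y^\delta-y^\dagger\|\le\delta$, minimality gives
\begin{equation*}
\|F(x_\alpha^\delta)-y^\delta\|^2+\alpha\|x_\alpha^\delta-x^\star\|^2\le T_\alpha^{y^\delta}(x^\dagger)\le\delta^2+\alpha\|x^\dagger-x^\star\|^2.
\end{equation*}
Expanding $\|x_\alpha^\delta-x^\star\|^2=\|x_\alpha^\delta-x^\dagger\|^2+2\langle x_\alpha^\delta-x^\dagger,x^\dagger-x^\star\rangle+\|x^\dagger-x^\star\|^2$ and cancelling the term $\alpha\|x^\dagger-x^\star\|^2$ on both sides leaves
\begin{equation*}
\|F(x_\alpha^\delta)-y^\delta\|^2+\alpha\|x_\alpha^\delta-x^\dagger\|^2\le\delta^2-2\alpha\langle x_\alpha^\delta-x^\dagger,x^\dagger-x^\star\rangle.
\end{equation*}

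The next step is to bound the cross term. Here the source condition $x^\dagger-x^\star=F'(x^\dagger)^\#\omega$ is essential: it moves the inner product from $X$ into $Y$, namely $\langle x_\alpha^\delta-x^\dagger,x^\dagger-x^\star\rangle=\langle F'(x^\dagger)(x_\alpha^\delta-x^\dagger),\omega\rangle$. To relate the linearization $F'(x^\dagger)(x_\alpha^\delta-x^\dagger)$ to the observable residual, I would write $F(x_\alpha^\delta)-F(x^\dagger)$ using the integral form of the mean value theorem and invoke the Lipschitz condition on $F'$ to obtain the Taylor remainder bound $\|F(x_\alpha^\delta)-F(x^\dagger)-F'(x^\dagger)(x_\alpha^\delta-x^\dagger)\|\le\tfrac{L}{2}\|x_\alpha^\delta-x^\dagger\|^2$. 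Combining this with $F(x^\dagger)=y^\dagger$, $\|y^\delta-y^\dagger\|\le\delta$ and the Cauchy--Schwarz inequality yields
\begin{equation*}
|\langle x_\alpha^\delta-x^\dagger,x^\dagger-x^\star\rangle|\le\|\omega\|\Bigl(\|F(x_\alpha^\delta)-y^\delta\|+\delta+\tfrac{L}{2}\|x_\alpha^\delta-x^\dagger\|^2\Bigr).
\end{equation*}

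Finally I would insert this estimate, abbreviating $r\defeq\|F(x_\alpha^\delta)-y^\delta\|$ and $e\defeq\|x_\alpha^\delta-x^\dagger\|$, to arrive at $r^2+\alpha e^2\le\delta^2+2\alpha\|\omega\|(r+\delta)+\alpha L\|\omega\|e^2$. The Lipschitz bound $L<1/\|\omega\|$ now pays off: the coefficient $(1-L\|\omega\|)$ of $\alpha e^2$ is strictly positive, so the quadratic remainder can be absorbed on the left. Applying Young's inequality $2\alpha\|\omega\|r\le r^2+\alpha^2\|\omega\|^2$ cancels $r^2$ on both sides and leaves
\begin{equation*}
(1-L\|\omega\|)\,\alpha e^2\le\delta^2+\alpha^2\|\omega\|^2+2\alpha\delta\|\omega\|.
\end{equation*}
Dividing by $\alpha$ and substituting the parameter choice $\alpha\sim\delta$ makes every term on the right of order $\delta$, hence $e^2\lesssim\delta$ and $\|x_\alpha^\delta-x^\dagger\|\lesssim\sqrt\delta$, as claimed.

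I expect the main obstacle to lie in the interplay between the source condition and the nonlinearity, concentrated in the two requirements that together permit the remainder to be absorbed: one needs the Taylor remainder to be genuinely quadratic in $e$ (supplied by the Lipschitz condition on $F'$), and one needs its prefactor $L\|\omega\|$ to remain below $1$ so that $(1-L\|\omega\|)>0$. This is precisely why \cref{assumption Neubauer} couples the size of the source element $\omega$ to the Lipschitz constant; without the smallness condition $L<1/\|\omega\|$ the quadratic term could not be moved to the left-hand side and the argument would break down.
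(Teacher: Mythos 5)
Your argument is correct and is essentially the classical proof of \cite[Thm.\,2.4]{Engl-Kunisch-Neubauer}, which the paper cites without reproducing: compare $T_\alpha^{y^\delta}$ at $x^\dagger$, expand the penalty, shift the cross term into $Y$ via the source condition, control the Taylor remainder by $\tfrac{L}{2}\|x_\alpha^\delta-x^\dagger\|^2$, and absorb it using $L<1/\|\omega\|$ before completing the square to get $(1-L\|\omega\|)\alpha e^2\le(\delta+\alpha\|\omega\|)^2$. No gaps; the parameter choice $\alpha\sim\delta$ then yields the rate $\sqrt\delta$ exactly as in the original reference.
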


The next step, made by Burger and Osher in 2004 \cite{Burger-Osher}, was to replace the squared Hilbert space norm $\|x- x^\star \|^2$ by a general convex penalty functional $J$, in which case one can still estimate the reconstruction error in terms of the \emph{Bregman distance}
\begin{equation*}
\mathcal D_J^\xi(x,x^\dagger)=J(x)-J(x^\dagger)-\langle\xi,x-x^\dagger\rangle\geq0\qquad\text{for }\xi\in\partial J(x^\dagger).
% ,\qquad\mathcal D_J(x,x^\dagger)=\{\mathcal D_J^\xi(x,x^\dagger)\,|\,\xi\in\partial J(x^\dagger)\}.
\end{equation*}
% In their paper they rewrite the source condition we discussed in last section in terms of subdifferentiability.
As before, let $x^\dagger$ be a $J$-minimizing solution of $F(x)=y^\dagger$ and consider the corresponding Lagrangian
\begin{align*}
	L(\omega, x) = J(x) + \langle \omega, F(x) - y^\dagger\rangle.
\end{align*}
%Using the fact that for a convex proper function $f$ minimality at $x$ is equivalent to $0 \in \partial f(x)$ 
Now the strong duality assumption is equivalent to the existence of a Lagrange multiplier $\omega$ such that $(\omega, x^\dagger)$ is a saddle point of the Lagrangian, which with convexity of $J$ and differentiability of $F$ implies the (generalized) source condition
\begin{align*}
	F'(x^\dagger)^\# \omega \in \partial J(x^\dagger).
\end{align*}
Again, a condition controlling the nonlinearity of operator $F$ is needed in addition. %Let us formulate the assumptions and the result.
\begin{assumption}[Conditions for convergence rates II, \cite{Burger-Osher}]\label{assumption Burger}
\begin{enumerate}
\item
	Let $X$ be a Banach space carrying also a potentially weaker topology $\tau_X$, $Y$ a Hilbert space, and $F:X \to Y$ sequentially continuous w.r.t.\ $\tau_X$ and Fréchet-differentiable. % on the (nonempty) interior of its convex domain $\mathcal{D}(F)$.
  Let $J$ be convex and sequentially lower semi-continuous w.r.t.\ $\tau_X$ with $\tau_X$-sequentially precompact sublevel sets.
\item
  Let $x^\dagger$ be a $J$-minimizing solution that fulfils the following \emph{source condition}: There exists $\omega \in Y$ satisfying
	\begin{align*}
		\xi\defeq F'(x^\dagger)^\# \omega \in \partial J(x^\dagger).
	\end{align*}
\item
	Let $F$ fulfil the following \emph{nonlinearity condition}: There exists $\eta > 0$ %and $r>0$ with
	\begin{align*}
		\langle F(x) - F(x^\dagger) -F'(x^\dagger)(x-x^\dagger), \omega\rangle \leq \eta \| F(x^\dagger) - F(x) \| \|\omega \| && \text{ for all } x \in X.% \mathcal{B}_r(x^\dagger).
	\end{align*}
\end{enumerate}
\end{assumption}

\begin{theorem}[Convergence rate II, \protect{\cite[Sec.\,3.3]{Burger-Osher}}]\label{convergence rates Burger}
	Let assumption \ref{assumption Burger} hold and $\| y^\delta - y^\dagger \| \leq \delta$.
	Then for the choice $\alpha \sim \delta$  it holds $\mathcal{D}_J^\xi(x_\alpha^\delta,x^\dagger) \lesssim \delta$.
\end{theorem}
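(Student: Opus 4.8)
The plan is to run the standard Tikhonov comparison argument for the functional $T_\alpha^{y^\delta}(x)=\|F(x)-y^\delta\|^2+\alpha J(x)$, whose minimizer $x_\alpha^\delta$ exists by the direct method under the topological hypotheses in \cref{assumption Burger}. The first step is to test minimality against the ground truth: since $F(x^\dagger)=y^\dagger$ and $\|y^\delta-y^\dagger\|\le\delta$, the inequality $T_\alpha^{y^\delta}(x_\alpha^\delta)\le T_\alpha^{y^\delta}(x^\dagger)$ together with the definition of the Bregman distance, $J(x_\alpha^\delta)-J(x^\dagger)=\mathcal D_J^\xi(x_\alpha^\delta,x^\dagger)+\langle\xi,x_\alpha^\delta-x^\dagger\rangle$, rearranges to
\[
\|F(x_\alpha^\delta)-y^\delta\|^2+\alpha\,\mathcal D_J^\xi(x_\alpha^\delta,x^\dagger)\le\delta^2-\alpha\langle\xi,x_\alpha^\delta-x^\dagger\rangle .
\]

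The conceptual heart of the argument is to convert the leftover linear term $-\alpha\langle\xi,x_\alpha^\delta-x^\dagger\rangle$ into something controllable by the residual. The source condition $\xi=F'(x^\dagger)^\#\omega$ moves the pairing into the image space, $\langle\xi,x_\alpha^\delta-x^\dagger\rangle=\langle\omega,F'(x^\dagger)(x_\alpha^\delta-x^\dagger)\rangle$; splitting off the linearization defect and estimating it by the nonlinearity condition (evaluated at $z=x_\alpha^\delta$), while bounding the remaining genuinely linear piece by Cauchy--Schwarz, yields $-\langle\xi,x_\alpha^\delta-x^\dagger\rangle\le(1+\eta)\|\omega\|\,\|F(x_\alpha^\delta)-F(x^\dagger)\|$.

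To close the estimate I would bound the residual by the triangle inequality, $\|F(x_\alpha^\delta)-F(x^\dagger)\|\le\|F(x_\alpha^\delta)-y^\delta\|+\delta$, and then absorb the cross term $\alpha(1+\eta)\|\omega\|\,\|F(x_\alpha^\delta)-y^\delta\|$ into the quadratic residual $\|F(x_\alpha^\delta)-y^\delta\|^2$ via Young's inequality. Discarding the (nonnegative) residual square and dividing by $\alpha$ leaves a bound of the form
\[
\mathcal D_J^\xi(x_\alpha^\delta,x^\dagger)\lesssim\frac{\delta^2}{\alpha}+\alpha+\delta ,
\]
so the balanced choice $\alpha\sim\delta$ gives the claimed rate $\mathcal D_J^\xi(x_\alpha^\delta,x^\dagger)\lesssim\delta$. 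I expect the second step to be the only delicate one: the rest is routine Tikhonov bookkeeping, but the interplay of the source and nonlinearity conditions is precisely what makes the otherwise uncontrollable subgradient pairing collapse onto the residual. A notable feature to verify along the way is that the nonlinearity constant enters only through the harmless factor $(1+\eta)$ and need not be small, in contrast to the smallness $L<1/\|\omega\|$ demanded by the Lipschitz condition of \cref{assumption Neubauer}.
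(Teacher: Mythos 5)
Your argument is correct and coincides with the standard Burger--Osher proof that the paper invokes by citation: the key step, converting $-\langle\xi,x_\alpha^\delta-x^\dagger\rangle$ into $(1+\eta)\|\omega\|\,\|F(x_\alpha^\delta)-F(x^\dagger)\|$ via the source and nonlinearity conditions, is exactly the computation the paper carries out in its remark on source conditions implying the variational source condition, and the remaining Tikhonov bookkeeping (triangle inequality, Young, $\alpha\sim\delta$) is the same. No gaps.
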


In 2006, Resmerita and Scherzer \cite{Resmerita-Scherzer} allowed also $Y$ to be Banach and replaced the nonlinearity control on $F$ by a Bregman distance. 
\begin{assumption}[Conditions for convergence rates III, \cite{Resmerita-Scherzer}] \label{assumption Resmerita}
\begin{enumerate}
\item
	Let $X,Y$ be Banach spaces, both carrying potentially weaker topologies $\tau_X$ and $\tau_Y$, and $F:X \to Y$ sequentially continuous w.r.t.\ $\tau_X$ and $\tau_Y$ and Fréchet-differentiable. %on the (nonempty) interior of its convex domain $\mathcal{D}(F)$.
  Let $J$ be convex and sequentially lower semi-continuous w.r.t.\ $\tau_X$ with $\tau_X$-sequentially precompact sublevel sets, and let the norm on $Y$ be sequentially lower semi-continuous w.r.t.\ $\tau_Y$.
\item
  Let $x^\dagger$ be a $J$-minimizing solution that fulfils the following \emph{source condition}: There exists $\omega \in Y$ satisfying
	\begin{align*}
		\xi\defeq F'(x^\dagger)^\# \omega \in \partial J(x^\dagger).
	\end{align*}
\item
	Let $F$ fulfil the following \emph{nonlinearity condition}: There exists $\eta<1/\|\omega\|$ %and $r>0$ with
	\begin{equation*}
		\|F(x) - F(x^\dagger) - F'(x^\dagger)(x - x^\dagger) \| \leq \eta\mathcal{D}_J^\xi(x, x^\dagger) \qquad \text{ for all } x\in X.% \in \mathcal{B}_r(x^\dagger).
	\end{equation*}
\end{enumerate}
\end{assumption}

\begin{theorem}[Convergence rate III, \protect{\cite[Thm.\,3.2]{Resmerita-Scherzer}}] \label{convergence rates Resmerita}
	Let assumption \ref{assumption Resmerita} hold and $\| y^\delta - y^\dagger \| \leq \delta$.
	Then for the choice $\alpha \sim \delta$ it holds
	$
		\| F(x_\alpha^\delta) - F(x^\dagger) \| \lesssim \delta
	$
	and
	$
		\mathcal{D}_J^\xi(x_\alpha^\delta, x^\dagger) \lesssim \delta.
	$
\end{theorem}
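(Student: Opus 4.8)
The plan is to run the standard comparison argument for Tikhonov-type regularization, now measured in the Bregman distance, with the Tikhonov functional $T_\alpha^{y^\delta}(x)=\|F(x)-y^\delta\|^2+\alpha J(x)$ (the natural Banach-space analogue obtained from \cref{assumption Burger} by keeping the quadratic discrepancy). First I would secure existence of a minimizer $x_\alpha^\delta$ by the direct method: along a minimizing sequence the values of $J$ are bounded, so $\tau_X$-sequential precompactness of the sublevel sets yields a $\tau_X$-convergent subsequence; $\tau_X$-sequential lower semicontinuity of $J$, $\tau_X$-to-$\tau_Y$ sequential continuity of $F$, and $\tau_Y$-sequential lower semicontinuity of the $Y$-norm together make $T_\alpha^{y^\delta}$ sequentially lower semicontinuous along it, so the limit is a minimizer. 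This is exactly what the topological part of Assumption (1) is designed to provide.

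The core estimate starts from minimality, comparing $x_\alpha^\delta$ against the exact solution $x^\dagger$ (for which $F(x^\dagger)=y^\dagger$), giving $\|F(x_\alpha^\delta)-y^\delta\|^2+\alpha\bigl(J(x_\alpha^\delta)-J(x^\dagger)\bigr)\le\delta^2$. Next I would rewrite the penalty gap through the Bregman distance: since $\xi=F'(x^\dagger)^\#\omega\in\partial J(x^\dagger)$, we have $J(x_\alpha^\delta)-J(x^\dagger)=\langle\xi,x_\alpha^\delta-x^\dagger\rangle+\mathcal D_J^\xi(x_\alpha^\delta,x^\dagger)$. The decisive move is the source condition, which lets me pass the subgradient pairing through the forward operator, $\langle\xi,x_\alpha^\delta-x^\dagger\rangle=\langle\omega,F'(x^\dagger)(x_\alpha^\delta-x^\dagger)\rangle$, followed by the nonlinearity condition, which bounds the linearization error $\|F(x_\alpha^\delta)-F(x^\dagger)-F'(x^\dagger)(x_\alpha^\delta-x^\dagger)\|$ by $\eta\,\mathcal D_J^\xi(x_\alpha^\delta,x^\dagger)$. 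Writing $F'(x^\dagger)(x_\alpha^\delta-x^\dagger)$ as $\bigl(F(x_\alpha^\delta)-F(x^\dagger)\bigr)$ minus that error and estimating by the norm duality pairing yields
\[
-\alpha\langle\xi,x_\alpha^\delta-x^\dagger\rangle\le\alpha\|\omega\|\,\|F(x_\alpha^\delta)-F(x^\dagger)\|+\alpha\eta\|\omega\|\,\mathcal D_J^\xi(x_\alpha^\delta,x^\dagger).
\]

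Substituting this bound and collecting the Bregman term on the left yields
\[
\|F(x_\alpha^\delta)-y^\delta\|^2+\alpha(1-\eta\|\omega\|)\,\mathcal D_J^\xi(x_\alpha^\delta,x^\dagger)\le\delta^2+\alpha\|\omega\|\,\|F(x_\alpha^\delta)-F(x^\dagger)\|.
\]
Here the assumption $\eta<1/\|\omega\|$ is essential: it makes the coefficient $1-\eta\|\omega\|$ strictly positive, so the Bregman term survives on the left rather than being swallowed. To close the estimate I would control the residual via $\|F(x_\alpha^\delta)-F(x^\dagger)\|\le\|F(x_\alpha^\delta)-y^\delta\|+\delta$ and split the resulting cross term by Young's inequality, $\alpha\|\omega\|\,\|F(x_\alpha^\delta)-y^\delta\|\le\tfrac12\|F(x_\alpha^\delta)-y^\delta\|^2+\tfrac12\alpha^2\|\omega\|^2$, absorbing the quadratic part into the left-hand side. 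This leaves
\[
\tfrac12\|F(x_\alpha^\delta)-y^\delta\|^2+\alpha(1-\eta\|\omega\|)\,\mathcal D_J^\xi(x_\alpha^\delta,x^\dagger)\le\delta^2+\tfrac12\alpha^2\|\omega\|^2+\alpha\|\omega\|\delta.
\]

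Finally I would insert the parameter choice $\alpha\sim\delta$, making the right-hand side $O(\delta^2)$. Reading off the two nonnegative terms on the left separately then gives $\|F(x_\alpha^\delta)-y^\delta\|^2\lesssim\delta^2$, hence $\|F(x_\alpha^\delta)-F(x^\dagger)\|\lesssim\delta$ by the triangle inequality once more, and $\mathcal D_J^\xi(x_\alpha^\delta,x^\dagger)\lesssim\delta^2/\alpha\sim\delta$. I expect the only genuinely delicate point to be the bookkeeping in the absorption step: one must peel off the Bregman term and the quadratic residual in the right order so that the factor $(1-\eta\|\omega\|)$ stays positive and the residual is fully absorbed; everything else is the source/nonlinearity mechanism together with elementary inequalities.
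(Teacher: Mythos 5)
The paper does not prove this theorem itself --- it is quoted from \cite{Resmerita-Scherzer} as part of the historical survey --- so there is no in-paper proof to compare against, but your argument is the standard (and correct) one for this result: minimality comparison against $x^\dagger$, Bregman decomposition of the penalty gap, the source condition to rewrite $\langle\xi,x_\alpha^\delta-x^\dagger\rangle$ via $F'(x^\dagger)^\#\omega$, the nonlinearity condition together with $\eta<1/\|\omega\|$ to absorb the linearization error, and Young's inequality with $\alpha\sim\delta$ to close the estimate. The only cosmetic point is that $\omega$ should be read as an element of $Y^*$ (with $\|\omega\|$ the dual norm), consistent with the paper's notation.
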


To this point the nonlinearity conditions on $F$ and the source conditions on $x^\dagger$ require differentiability of $F$ at $x^\dagger$.
In 2007, Hofmann, Kaltenbacher, Pöschl, and Scherzer managed to remove this restriction via a reformulation as a variational inequality \cite{Hofmann-Kaltenbacher-Poschl-Scherzer},
arriving at a so-called \emph{variational source condition}. %(which we later relate to the previous conditions).
They also slightly generalize the Tikhonov functional to
\begin{equation} \label{eqn:TikhonovFunctionalPNorm}
	T_{\alpha}^{\delta}(x) \defeq \|F(x)-y^\delta\|^p + \alpha J(x)\qquad \text{ for } p>1.
\end{equation}

\begin{assumption}[Conditions for convergence rates IV, \cite{Hofmann-Kaltenbacher-Poschl-Scherzer}] \label{assumption Hofmann}
\begin{enumerate}
\item
	Let assumption \ref{assumption Resmerita} (1) hold, except Fréchet differentiability of $F$ is no longer required.
\item
	Let $x^\dagger$ be a $J$-minimizing solution, and let there exist $\xi \in \partial J(x^\dagger)$, $\beta_1 \in[0,1)$ and $\beta_2\geq0$ such that
	\begin{equation*}
		-\langle\xi, x - x^\dagger\rangle \leq \beta_1  \mathcal{D}_J^\xi(x, x^\dagger) + \beta_2 \| F(x) - F(x^\dagger) \| \qquad \text{ for all } x \in X.% \mathcal{B}_r(x^\dagger).
	\end{equation*}
\end{enumerate}
\end{assumption}

\begin{remark}[Source and nonlinearity condition imply variational source condition]
  Each of the previous assumptions implies assumption \ref{assumption Hofmann}, so the latter is the weakest.
  Indeed, assumption \ref{assumption Resmerita} is already weaker than assumption \ref{assumption Neubauer}, and given assumption \ref{assumption Resmerita} or \ref{assumption Burger} we can argue as follows:
  Pick $\xi=F'(x^\dagger)^\# \omega$, then
  \begin{equation*}
		\langle\xi, x - x^\dagger\rangle
		= \langle\omega,F'(x^\dagger) (x - x^\dagger) + F(x^\dagger) - F(x)\rangle + \langle\omega, F(x) - F(x^\dagger)\rangle,
  \end{equation*}
  which under assumption \ref{assumption Burger} results in
	%In the special case where assumption \ref{assumption Burger} and in particular differentiability of $F$ is given, the following holds
	\begin{equation*}
		-\langle\xi, x - x^\dagger\rangle
		%&= \langle F'(x^\dagger)^\# \omega, x - x^\dagger\rangle \\
		%&= \langle\omega,F'(x^\dagger) (x - x^\dagger)\rangle \\
		%&= \langle\omega,F'(x^\dagger) (x - x^\dagger) + F(x^\dagger) - F(x) - F(x^\dagger) + F(x)\rangle \\
		%&= \langle\omega,F'(x^\dagger) (x - x^\dagger) + F(x^\dagger) - F(x)\rangle + \langle\omega, F(x) - F(x^\dagger)\rangle \\
		\leq \eta \|\omega \| \| F(x^\dagger) - F(x) \|  + \| \omega \| \| F(x) - F(x^\dagger) \|
		= \underbrace{(1 + \eta) \| \omega \|}_{\eqdef \beta_2} \| F(x^\dagger) - F(x) \|
	\end{equation*}
	and under assumption \ref{assumption Resmerita} results in
	\begin{equation*}
		-\langle\xi, x - x^\dagger\rangle
		%&= \langle F'(x^\dagger)^\# \omega, x - x^\dagger\rangle \\
		%&= \langle\omega,F'(x^\dagger) (x - x^\dagger)\rangle \\
		%&= \langle\omega,F'(x^\dagger) (x - x^\dagger) + F(x^\dagger) - F(x) - F(x^\dagger) + F(x)\rangle \\
		%&= \langle\omega,F'(x^\dagger) (x - x^\dagger) + F(x^\dagger) - F(x)\rangle + \langle\omega, F(x) - F(x^\dagger)\rangle \\
		\leq \| \omega \| \| F'(x^\dagger) (x - x^\dagger) + F(x^\dagger) - F(x) \| + \| \omega \| \|  F(x) - F(x^\dagger)\|
		\leq \underbrace{\eta \| \omega \|}_{\eqdef \beta_1}  \mathcal{D}_J^\xi(x, x^\dagger) + \underbrace{\| \omega \|}_{\eqdef \beta_2} \|  F(x) - F(x^\dagger)\|.
	\end{equation*}
	% which is assumption \ref{assumption Hofmann} again. \\
	% Finally recall that assumption \ref{assumption Neubauer} could already be replaced by assumption \ref{assumption Resmerita}. \\
	% All together we see that assumption \ref{assumption Hofmann} covers all other conditions we had studied in their respective context. In other words the following theorem can be applied in all the former situations.
\end{remark}

\begin{theorem}[Convergence rate IV, \protect{\cite[Thm.\,4.4]{Hofmann-Kaltenbacher-Poschl-Scherzer}}] \label{convergence rates Hofmann}
	Let assumption \ref{assumption Hofmann} hold and $\| y^\delta - y^\dagger \| \leq \delta$. %Let $x_\alpha^\delta$ be a minimizer of the Tikhonov functional $T_\alpha^\delta$ in \eqref{formula Tikhonov functional with p}, where $p > 1$.
	Then for the choice $\alpha \sim \delta^{p-1}$ it holds 
	$
		\| F(x_\alpha^\delta) - F(x^\dagger) \| \lesssim \delta
	$
	and
	$
		\mathcal{D}_J^\xi(x_\alpha^\delta, x^\dagger) \lesssim \delta.
	$
\end{theorem}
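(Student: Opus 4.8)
The plan is to run the standard comparison argument for Tikhonov minimizers: exploit that $x_\alpha^\delta$ minimizes the functional $T_\alpha^\delta$ from \eqref{eqn:TikhonovFunctionalPNorm}, and use that the variational source condition in Assumption \ref{assumption Hofmann}\,(2) lets us bound the penalty gap from below by the Bregman distance. Crucially, this argument never invokes $F'$, which is exactly the point of the variational reformulation. Existence of a minimizer $x_\alpha^\delta$ follows from the direct method using the $\tau_X$-sequential lower semicontinuity and precompact sublevel sets of $J$, the $\tau_X$-to-$\tau_Y$ sequential continuity of $F$, and the $\tau_Y$-lower semicontinuity of the norm; I would only remark on this rather than detail it.

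First I would record the minimality inequality. Comparing $T_\alpha^\delta(x_\alpha^\delta)\le T_\alpha^\delta(x^\dagger)$ and using $F(x^\dagger)=y^\dagger$ together with $\|y^\delta-y^\dagger\|\le\delta$ yields
\begin{equation*}
\|F(x_\alpha^\delta)-y^\delta\|^p + \alpha\bigl(J(x_\alpha^\delta)-J(x^\dagger)\bigr) \le \delta^p.
\end{equation*}
Next, expanding $J(x_\alpha^\delta)-J(x^\dagger)=\mathcal{D}_J^\xi(x_\alpha^\delta,x^\dagger)+\langle\xi,x_\alpha^\delta-x^\dagger\rangle$ and inserting the variational source condition at $x=x_\alpha^\delta$ to lower-bound $\langle\xi,x_\alpha^\delta-x^\dagger\rangle$, I obtain
\begin{equation*}
\|F(x_\alpha^\delta)-y^\delta\|^p + \alpha(1-\beta_1)\,\mathcal{D}_J^\xi(x_\alpha^\delta,x^\dagger) \le \delta^p + \alpha\beta_2\,\|F(x_\alpha^\delta)-F(x^\dagger)\|,
\end{equation*}
where $\beta_1<1$ guarantees both left-hand terms are nonnegative.

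The key remaining step is to tame the residual cross term on the right. I would bound $\|F(x_\alpha^\delta)-F(x^\dagger)\|\le\|F(x_\alpha^\delta)-y^\delta\|+\delta$ by the triangle inequality, and then absorb the resulting multiple of $\|F(x_\alpha^\delta)-y^\delta\|$ into the $p$-th power on the left via Young's inequality $ab\le\tfrac{\epsilon}{p}a^p+\tfrac{1}{q\epsilon^{q/p}}b^q$ with conjugate exponent $q=p/(p-1)$. Choosing $\epsilon$ small enough that the coefficient of $\|F(x_\alpha^\delta)-y^\delta\|^p$ stays positive leaves
\begin{equation*}
\|F(x_\alpha^\delta)-y^\delta\|^p + \alpha(1-\beta_1)\,\mathcal{D}_J^\xi(x_\alpha^\delta,x^\dagger) \lesssim \delta^p + \alpha\delta + \alpha^q.
\end{equation*}
This is precisely where $p>1$ is essential: for $p=1$ the cross term cannot be absorbed into the residual power.

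Finally I would insert the parameter choice $\alpha\sim\delta^{p-1}$. Since $q=p/(p-1)$ gives $\alpha^q\sim\delta^p$ and also $\alpha\delta\sim\delta^p$, the right-hand side is $\lesssim\delta^p$, whence $\|F(x_\alpha^\delta)-y^\delta\|\lesssim\delta$ and, via the triangle inequality once more, $\|F(x_\alpha^\delta)-F(x^\dagger)\|\lesssim\delta$. Dividing the Bregman term by $\alpha$ yields $\mathcal{D}_J^\xi(x_\alpha^\delta,x^\dagger)\lesssim\delta^p/\alpha\sim\delta$, the second claimed rate. The main obstacle is the bookkeeping in this Young-inequality step: one must verify that the single scaling $\alpha\sim\delta^{p-1}$ simultaneously balances all three terms $\delta^p$, $\alpha\delta$, and $\alpha^q$ after division by $\alpha$, and indeed requiring each of $\delta^p/\alpha$, $\delta$, and $\alpha^{q-1}=\alpha^{1/(p-1)}$ to be $\lesssim\delta$ is what forces this choice of $\alpha$ from both sides.
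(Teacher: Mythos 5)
Your argument is correct and complete. Note that the paper itself does not prove this statement---it is quoted verbatim from the literature (Theorem 4.4 of Hofmann--Kaltenbacher--P\"oschl--Scherzer) as part of a historical survey---but your proof is exactly the standard one underlying that reference: minimality of $x_\alpha^\delta$, insertion of the variational source condition to convert the penalty gap into a Bregman distance, the triangle inequality on the residual, absorption of the cross term via Young's inequality with conjugate exponents $p$ and $q=p/(p-1)$, and the balancing choice $\alpha\sim\delta^{p-1}$. Your closing observation that the three terms $\delta^p/\alpha$, $\delta$, and $\alpha^{1/(p-1)}$ pin down the parameter choice from both sides is also accurate.
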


In his 2009 joint work with Yamamoto \cite{Hofmann-Yamamoto}, Hofmann himself points out a slight generalization of theorem \ref{convergence rates Hofmann}
in which the only change is a relaxation of the nonlinearity condition by some exponent $\kappa$.
 
\begin{assumption}[Conditions for convergence rates V, \cite{Hofmann-Yamamoto}]\label{assumption Yamamoto}
\begin{enumerate}
\item
	Let assumption \ref{assumption Hofmann} (1) hold.
\item
	Let $x^\dagger$ be a $J$-minimizing solution, and let there exist $\xi \in \partial J(x^\dagger)$, $\beta_1 \in[0,1)$, $\beta_2\geq0$, and $\kappa\in(0,1]$ such that
	\begin{equation*}
		-\langle\xi, x - x^\dagger\rangle \leq \beta_1  \mathcal{D}_J^\xi(x, x^\dagger) + \beta_2 \| F(x) - F(x^\dagger) \|^\kappa \qquad \text{ for all } x \in X.% \mathcal{B}_r(x^\dagger).
	\end{equation*}
\end{enumerate}
\end{assumption}
% This assumption only differs from assumption \ref{assumption Hofmann} in the exponent $\kappa$ in the right-hand side of the inequality. Now the corresponding result, whose proof is analogous to the one of theorem \eqref{convergence rates Hofmann} and hence also based on the Young inequality.
 
\begin{theorem}[Convergence rate V, \protect{\cite[Thm.\,3.3]{Hofmann-Yamamoto}}]
Let assumption \ref{assumption Yamamoto} hold and $\| y^\delta - y^\dagger \| \leq \delta$. %Let $x_\alpha^\delta$ be a minimizer of the Tikhonov functional $T_\alpha^\delta$ in \eqref{eqn:TikhonovFunctionalPNorm}, where $p > 1$.
Then for the choice $\alpha \sim \delta^{p-\kappa}$ it holds
$
  \mathcal{D}_J^\xi(x_\alpha^\delta, x^\dagger) \sim \delta^\kappa.
$
\end{theorem}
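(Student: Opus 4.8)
The plan is to combine the minimizing property of $x_\alpha^\delta$ with the variational source condition of assumption \ref{assumption Yamamoto}(2), exactly as in the proof of \cref{convergence rates Hofmann}, and then carry the extra exponent $\kappa$ through a Young-type inequality. Throughout I abbreviate $\rho \defeq \|F(x_\alpha^\delta)-y^\delta\|$ and $D \defeq \mathcal{D}_J^\xi(x_\alpha^\delta,x^\dagger) \ge 0$. First I would use that $x_\alpha^\delta$ minimizes the Tikhonov functional \eqref{eqn:TikhonovFunctionalPNorm} together with $\|F(x^\dagger)-y^\delta\| = \|y^\dagger-y^\delta\| \le \delta$, which gives $\rho^p + \alpha J(x_\alpha^\delta) \le \delta^p + \alpha J(x^\dagger)$. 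Rewriting $J(x_\alpha^\delta)-J(x^\dagger) = D + \langle\xi,x_\alpha^\delta-x^\dagger\rangle$ (valid since $\xi\in\partial J(x^\dagger)$) and applying the source condition to the inner product yields
\begin{equation*}
\rho^p + \alpha(1-\beta_1)\,D \le \delta^p + \alpha\beta_2\,\|F(x_\alpha^\delta)-F(x^\dagger)\|^\kappa,
\end{equation*}
where crucially $1-\beta_1>0$.

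Next I would estimate the residual term on the right. By the triangle inequality $\|F(x_\alpha^\delta)-F(x^\dagger)\| = \|F(x_\alpha^\delta)-y^\dagger\| \le \rho+\delta$, and since $t\mapsto t^\kappa$ is subadditive for $\kappa\in(0,1]$ this gives $\|F(x_\alpha^\delta)-F(x^\dagger)\|^\kappa \le \rho^\kappa + \delta^\kappa$. The term $\alpha\beta_2\rho^\kappa$ is then absorbed into $\rho^p$ by Young's inequality with the conjugate exponents $p/\kappa$ and $p/(p-\kappa)$: for a constant $C$ depending only on $p$ and $\kappa$,
\begin{equation*}
\alpha\beta_2\,\rho^\kappa \le \tfrac12\rho^p + C\,(\alpha\beta_2)^{p/(p-\kappa)}.
\end{equation*}
Substituting, discarding the nonnegative $\tfrac12\rho^p$ on the left, dividing by $\alpha(1-\beta_1)$, and using $p/(p-\kappa)-1 = \kappa/(p-\kappa)$ leaves
\begin{equation*}
D \lesssim \frac{\delta^p}{\alpha} + \beta_2\,\delta^\kappa + \beta_2^{p/(p-\kappa)}\,\alpha^{\kappa/(p-\kappa)}.
\end{equation*}

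Finally I would insert the parameter choice $\alpha\sim\delta^{p-\kappa}$. Then $\delta^p/\alpha\sim\delta^\kappa$ and $\alpha^{\kappa/(p-\kappa)}\sim\delta^\kappa$, so all three terms on the right are of order $\delta^\kappa$, giving the claimed rate $\mathcal{D}_J^\xi(x_\alpha^\delta,x^\dagger)\lesssim\delta^\kappa$ (existence of $x_\alpha^\delta$ being guaranteed by assumption \ref{assumption Yamamoto}(1) via the usual direct-method argument, which I would cite rather than repeat). The computation is largely bookkeeping; the only genuine subtlety — and the sole place the exponent $\kappa$ enters — is the Young inequality step, where one must split $\rho^\kappa$ against $\rho^p$ so that the leftover power of $\alpha$ combines with $\delta^p/\alpha$ to the \emph{same} exponent $\delta^\kappa$. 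That these bounds balance precisely is what forces the scaling $\alpha\sim\delta^{p-\kappa}$, and checking this balance is the crux of the argument.
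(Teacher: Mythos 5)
Your proof is correct and follows the standard argument (minimality of $x_\alpha^\delta$ against $x^\dagger$, insertion of the variational inequality from assumption \ref{assumption Yamamoto}(2), triangle inequality plus subadditivity of $t\mapsto t^\kappa$, and a Young-inequality absorption of $\alpha\beta_2\rho^\kappa$ into $\rho^p$) that underlies the cited result; the paper itself states this theorem without proof as part of its historical survey, so there is nothing to diverge from. The only caveat is that you establish the one-sided bound $\mathcal{D}_J^\xi(x_\alpha^\delta,x^\dagger)\lesssim\delta^\kappa$, which is what the cited theorem actually asserts despite the ``$\sim$'' in the statement as reproduced here.
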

 
This result suggests that the variational inequality as well as the Tikhonov functional, which both simply use powers of the norm on Banach space $Y$, could be generalized and still provide rates of convergence. In fact, the same year Hofmann and Bo\textcommabelow{t} consider the Tikhonov functional \cite{Hofmann-Bot}
\begin{align*}
  T_{\alpha}^{\delta}(x) \defeq \psi(\|F(x)-y^\delta\|) + \alpha J(x).
\end{align*}

\begin{assumption}[Conditions for convergence rates VI, \cite{Hofmann-Bot}]\label{assumption Bot}
\begin{enumerate}
\item
Let assumption \ref{assumption Resmerita} (1) hold and $J$ be Gateaux-differentiable in $x^\dagger$.
\item
Let $\psi,\phi$ be twice differentiable index functions, \ie strictly increasing functions on $[0,\infty)$ with $\psi(0) = 0 = \phi(0)$. Function $\psi$ shall be strictly convex and $\phi$ concave.
\item
Let $x^\dagger$ be a $J$-minimizing solution, and let there exist $\xi \in \partial J(x^\dagger)$, $\beta_1 \in[0,1)$, $\beta_2\geq0$ such that
\begin{equation*}
  -\langle\xi, x - x^\dagger\rangle \leq \beta_1  \mathcal{D}_J^\xi(x, x^\dagger) + \beta_2 \phi(\| F(x) - F(x^\dagger) \|) \qquad \text{ for all } x \in X.% \mathcal{B}_r(x^\dagger).
\end{equation*}
\end{enumerate}
\end{assumption}

\begin{theorem}[Convergence rate VI, \protect{\cite[Thm.\,4.3]{Hofmann-Bot}}]\label{thm:HofmannBot}
Let assumption \ref{assumption Bot} hold and $\| y^\delta - y^\dagger \| \leq \delta$. %Let $x_\alpha^\delta$ be a minimizer of the Tikhonov functional in the assumption.
Then for the choice
$
  \alpha \sim \frac{1}{\beta_2} \frac{\psi'}{\phi'}(\delta)
$
it holds
$
  \mathcal{D}_J^\xi(x_\alpha^\delta, x^\dagger) \lesssim \phi(\delta).
$
\end{theorem}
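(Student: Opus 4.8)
The proof follows the standard template for variational Tikhonov regularization (as in Theorem~\ref{convergence rates Hofmann}), the only twist being that the index functions $\psi,\phi$ take over the role of fixed powers of the norm; consequently the algebra with squared norms is replaced throughout by convexity and concavity estimates. Existence of a minimizer $x_\alpha^\delta$ of $T_\alpha^\delta$ would first be secured by the direct method, for which the topological part of Assumption~\ref{assumption Bot}(1) is precisely tailored: $\tau_X$-lower semicontinuity of $J$ with $\tau_X$-precompact sublevel sets, $\tau_X$--$\tau_Y$-continuity of $F$, and $\tau_Y$-lower semicontinuity of the $Y$-norm composed with the monotone $\psi$.

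The core estimate starts from minimality of $x_\alpha^\delta$ tested against $x^\dagger$. Using $F(x^\dagger)=y^\dagger$, $\|y^\dagger-y^\delta\|\leq\delta$ and monotonicity of $\psi$,
\[
\psi(\|F(x_\alpha^\delta)-y^\delta\|)+\alpha J(x_\alpha^\delta)\leq\psi(\|F(x^\dagger)-y^\delta\|)+\alpha J(x^\dagger)\leq\psi(\delta)+\alpha J(x^\dagger).
\]
Subtracting $\alpha J(x^\dagger)+\alpha\langle\xi,x_\alpha^\delta-x^\dagger\rangle$ and recognizing the Bregman distance on the left yields
\[
\alpha\,\mathcal D_J^\xi(x_\alpha^\delta,x^\dagger)\leq\psi(\delta)-\psi(\|F(x_\alpha^\delta)-y^\delta\|)-\alpha\langle\xi,x_\alpha^\delta-x^\dagger\rangle.
\]
I would then apply the variational source condition of Assumption~\ref{assumption Bot}(3) with $x=x_\alpha^\delta$ to control $-\langle\xi,x_\alpha^\delta-x^\dagger\rangle$, moving the resulting $\alpha\beta_1\mathcal D_J^\xi$ term to the left---this is exactly where $\beta_1<1$ enters---which leaves, with the shorthand $s=\|F(x_\alpha^\delta)-y^\delta\|$ and $r=\|F(x_\alpha^\delta)-F(x^\dagger)\|$,
\[
\alpha(1-\beta_1)\,\mathcal D_J^\xi(x_\alpha^\delta,x^\dagger)\leq\psi(\delta)-\psi(s)+\alpha\beta_2\,\phi(r).
\]

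The decisive step is a purely scalar estimate. Since $\phi$ is a concave index function it is subadditive, and $r\leq s+\delta$ by the triangle inequality, so $\phi(r)\leq\phi(s)+\phi(\delta)$; this reduces everything to the single variable $s$. It then suffices to bound $h(s)\defeq\alpha\beta_2\phi(s)-\psi(s)$ from above. As $h$ is concave (a sum of the concave $\alpha\beta_2\phi$ and the strictly concave $-\psi$), any stationary point is its global maximum, and the prescribed choice $\alpha\beta_2=\psi'(\delta)/\phi'(\delta)$ is exactly the stationarity condition $h'(\delta)=-\psi'(\delta)+\alpha\beta_2\phi'(\delta)=0$. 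Hence $h(s)\leq h(\delta)=\alpha\beta_2\phi(\delta)-\psi(\delta)$, and substituting this back collapses the right-hand side of the last display to $2\alpha\beta_2\phi(\delta)$, giving $\mathcal D_J^\xi(x_\alpha^\delta,x^\dagger)\leq\frac{2\beta_2}{1-\beta_1}\phi(\delta)\lesssim\phi(\delta)$, as claimed.

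I expect the genuinely delicate point to be this scalar optimization and the interplay of the two distinct index functions with their different arguments $r$ and $s$: one must see that the $\alpha$-choice is not an arbitrary normalization but is forced by the stationarity of $h$, and that the twice differentiability together with strict convexity of $\psi$ and concavity of $\phi$ really pin $s=\delta$ as the maximizer of $h$ on $[0,\infty)$ and keep $\alpha\beta_2=\psi'(\delta)/\phi'(\delta)$ positive. The Gâteaux differentiability of $J$ at $x^\dagger$ in Assumption~\ref{assumption Bot}(1) serves to make $\xi$, and hence the Bregman distance, unambiguous throughout the argument.
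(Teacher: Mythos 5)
The paper does not prove this statement itself---it is quoted verbatim from the cited reference [Hofmann--Bo\textcommabelow{t}, Thm.~4.3]---so there is no in-paper proof to compare against; your argument is, however, a correct reconstruction of the standard one: minimality against $x^\dagger$, the variational source condition with $\beta_1<1$, subadditivity of the concave index function $\phi$ after the triangle inequality $r\leq s+\delta$, and the scalar maximization of $h(s)=\alpha\beta_2\phi(s)-\psi(s)$, whose unique critical point is pinned at $s=\delta$ precisely by the prescribed parameter choice since $h'$ is strictly decreasing. The only caveat worth noting is that you use the exact equality $\alpha\beta_2=\psi'(\delta)/\phi'(\delta)$, whereas the statement only asserts $\alpha\sim\frac{1}{\beta_2}\frac{\psi'}{\phi'}(\delta)$; for a proportionality constant $c\neq1$ the maximizer of $h$ shifts away from $\delta$ and recovering the same rate requires a little extra regularity of $\psi'/\phi'$, but this does not affect the substance of your proof.
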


\begin{remark}[Local nonlinearity conditions]\label{rem:localConditions}
For simplicity we assumed the forward operator $F$ to be defined on all of $X$, which is actually not required in the above strand of literature (instead there are conditions on its domain).
Furthermore, it suffices to require the nonlinearity condition only on a sublevelset of $T_{\bar\alpha}^0$ for some fixed but arbitrary $\bar\alpha$ and correspondingly chosen level \cite[Rem.\,3.6]{Hofmann-Kaltenbacher-Poschl-Scherzer}.
Alternatively, one could obviously restrict the Tikhonov functional to a subset $\mathcal B\subset X$ (closed w.r.t.\ $\tau_X$ and containing $x^\dagger$) and then require the nonlinearity condition only on $\mathcal B$.
% Alternatively, one could require the nonlinearity condition only in a small neighbourhood of $x^\dagger$,
% in which case the convergence rates only hold for reconstructions $x_\alpha^\delta$ converging to $x^\dagger$ as $\delta\to0$
% (the Tikhonov reconstructions $x_\alpha^\delta$ may in principle have multiple limit points as the nonlinear inverse problem may have multiple solutions).
\end{remark}

%%---------------------------------------------------------------------------------------------------
\section{Recap of Tikhonov regularization for generalized fidelity terms}\label{sec:generalizedFidelity}

In the previous section the fidelity term of our Tikhonov functionals was based on the norm of Banach space $Y$.
We now recapitulate the extension from \cite{Poschl,Flemming,Werner,Werner-Hohage} to fidelity measures $S(F(x),z)$ that are useful for a bigger class of ill-posed problems.
In particular, they allow the measurement $y^\delta$ to lie in a different space $Z$ than the forward operator $F$ maps into
(as is often relevant for Poisson type data, where the measurement is an empirical measure, while the range of $F$ consists of smooth probability densities).
Hence we define the (generalized) Tikhonov functional as
\begin{align*}
  T_\alpha^\delta (x) \defeq S(F(x),y^\delta) + \alpha J(x).
\end{align*}
Existence, stability, and convergence of the Tikhonov regularization can then be shown under the following assumptions.

\begin{assumption}[Tikhonov functional properties, \cite{Poschl,Flemming,Werner}] \label{setting poschl}
\begin{enumerate}
\item
	Let $X,Y,Z$ be Banach spaces (or affine subspaces) with potentially weaker topologies $\tau_X,\tau_Y,\tau_Z$, let $F:X \to Y$ sequentially continuous w.r.t.\ $\tau_X$ and $\tau_Y$.
  Let $J$ be convex and sequentially lower semi-continuous w.r.t.\ $\tau_X$ with $\tau_X$-sequentially precompact sublevel sets.
	%Consider the forward operator $F: X \to Y$, where $X$ and $Y$ are Banach spaces carrying topologies $\tau_X$ and $\tau_Y$, such that $F$ is sequentially continuous with respect to $\tau_X$ and $\tau_Y$. We also assume that $\mathcal{D}(F)$ is closed in $\tau_X$. Moreover, we assume attainability, meaning, for the exact data $y^\dagger$, there exists a solution $x^\dagger$ of $F(x) = y^\dagger$. The measured data lives in the Banach space $Z$, which also carries a topology $\tau_{Z}$.
	%Here, $J:X \to [0,\infty]$ is assumed to be proper and lower semi-continuous with respect to  $\tau_X$.
\item
  Let $S:Y \times Z \to \R$ satisfy
	\begin{itemize}
		\item $S$ is sequentially lower semi-continuous with respect to  $\tau_Y \otimes\tau_{Z}$,
		\item $S$ is bounded from below on $\mathrm{range}(F)\times Z$,
		\item $z_n \to z$ in $\tau_Z$ implies $S(y,z_n) \to S(y,z)$ for all $y \in Y$ with $S(y,z) < \infty$.% and hence $S(y,z_n)$ is a bounded sequence.
	\end{itemize}
% 	Finally, the condition that replaces the use of Banach--Alouglu: For every $\alpha > 0 $, $z \in Z$ and $M>0$ the sublevel sets
% 	\begin{align*}
% 		\mathcal{M}_{\alpha,z}(M) \defeq \{x \in \mathcal{D}(J) \cap \mathcal{D}(F) : T_\alpha^z(x) \leq M \}
% 	\end{align*}
% 	are sequentially compact with respect to $\tau_X$, that is, every sequence in $ \mathcal{M}_{\alpha,y}(M)$ has a subsequence that converges with respect to $\tau_X$.
% \todo[inline]{The previous coercivity condition is a not so important generalization of requiring that $J$ has precompact sublevel sets, thus we should instead use that simpler condition.}
\end{enumerate}
\end{assumption}

Note that the proofs of the previous convergence rate results \crefrange{Convergence rates Neubauer}{thm:HofmannBot} all exploited that the norm inside the fidelity term of the Tikhonov functional satisfies the triangle inequality
\begin{equation*}
\|F(x_\alpha^\delta)-y^\dagger\|\leq\|F(x_\alpha^\delta)-y^\delta\|+\|y^\dagger-y^\delta\|.
\end{equation*}
The second summand was no larger than $\delta$ by definition, and the first forms part of the Tikhonov functional and thus could be bounded using the minimization property of $x_\alpha^\delta$.
Having Kullback--Leibler-type fidelities $S$ in mind, however, we can no longer assume a triangle inequality of the form $S(F(x),y^\dagger)\leq S(F(x),y^\delta)+S(y^\dagger,y^\delta)$
and then take the last term as quantification of the noise.
In his 2011 dissertation \cite{Werner}, Werner thus described an alternative noise quantification that can be used when the triangle inequality is not available.
He introduced a second functional $\mathcal{T}(\cdot,\cdot)$ to measure the similarity between the reconstruction $F(x)$ and the exact data $y^\dagger=F(x^\dagger)$
(which naturally has to satisfy $\mathcal{T} \geq 0$ and $\mathcal{T}(y,y) = 0$)
and then estimated $\mathcal{T}(F(x_\alpha^\delta),y^\dagger)$ based on the following new noise quantification:
It is assumed that
\begin{equation*}
	\mathrm{Err}(y)(y^\dagger,y^\delta) \leq \delta
  \text{ for all }y\in Y
  \qquad\text{with }
  \mathrm{Err}(y)(y^\dagger,y^\delta) \defeq  \mathcal{T}(y,y^\dagger) - S(y,y^\delta) + S(y^\dagger,y^\delta).
\end{equation*}
With this definition one gets $\mathcal{T}(F(x_\alpha^\delta),y^\dagger)\leq\delta+S(F(x_\alpha^\delta),y^\delta) - S(y^\dagger,y^\delta)$,
of which the last two summands form part of $T_\alpha^\delta(x_\alpha^\delta)-T_\alpha^\delta(x^\dagger)$ and thus can be estimated via the minimization property of $x_\alpha^\delta$.

A possible interpretation of this noise quantification follows from rewriting $\mathrm{Err}(y)(y^\dagger,y^\delta) \leq \delta$ as
\begin{align*}
	\mathcal{T}(y,y^\dagger) - \delta \leq  S(y,y^\delta) - S(y^\dagger,y^\delta).
\end{align*}
The right-hand side essentially is $S$, corrected in such a way that it vanishes when $y$ coincides with $y^\dagger$.
It gives an upper bound for the similarity $\mathcal{T}(y,y^\dagger)$ up to an error $\delta$.
$\mathcal{T}$ should be as strong (\ie large) as possible to give good rates of convergence, but needs to be weaker (\ie smaller) than the right-hand side of the inequality in order to be estimated based on it.
The latter condition might or might not be possible with $\mathcal{T} = S$, which is the reason why the second functional $\mathcal{T}$ became necessary.
Moreover, the fact $\mathcal{T} \geq 0$ enforces $S(y,y^\delta) - S(y^\dagger,y^\delta)$ to stay above $-\delta$, thus the minimal value of $S(\cdot,y^\delta)$ differs from $S(y^\dagger,y^\delta)$ by no more than $\delta$.
In other words, $y^\dagger$ is close to being a minimizer of $S(\cdot,y^\delta)$, which can be understood as similarity between $y^\dagger$ and $y^\delta$ (see \cref{fig:noiseQuantification}).
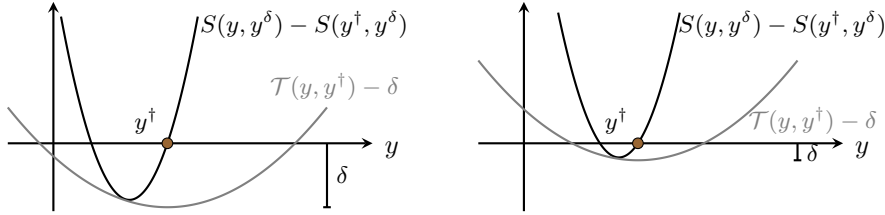
\begin{figure}[h]  % Positioning: here, top, bottom, page
\centering
\begin{tikzpicture}
  \begin{axis}[
    x=1cm,
    y=.75cm,
    axis x line=middle,
    axis y line=middle,
    xtick=\empty,
    ytick=\empty,
    xlabel={$y$},
    ylabel={},
    axis line style=thick,
    ymin=-1.2, ymax=2.5,
    xmin=-0.6, xmax=4.2,
    clip=false,
    every axis x label/.style={at={(current axis.right of origin)},anchor=north west,yshift=4pt,xshift=1pt}
    ]

    % S(y,y^\delta) - S(y^\dagger,y^\delta)
    \addplot[black, thick, samples=200,domain=.1:1.9] {4*(x-1)^2 - 1};
    \node[black] at (axis cs:3.3,2.1) {\small$S(y,y^\delta) - S(y^\dagger,y^\delta)$};

    % \mathcal{T}(y,y^\dagger) - \delta
    \addplot[gray, thick, samples=200,domain=-0.6:3.6] {.4*(x-1.5)^2 - 1.13};
    \node[gray] at (axis cs:3.7,1.0) {\small$\mathcal{T}(y,y^\dagger) - \delta$};

    % x-intercept of first curve: x = 1.5
    \addplot+[mark=*,black,only marks] coordinates {(1.5,0)};
    \node[above left] at (axis cs:1.5,0) {\small$y^\dagger$};

    % Measurement bar from axis
    \draw[thick] (axis cs:3.6,0) -- (axis cs:3.6,-1.13);
    % Add hook-style bars at ends
    \draw[thick] (axis cs:3.55,-1.13) -- (axis cs:3.65,-1.13);
    % Label the bar with delta
    \node[right] at (axis cs:3.6,-0.56) {\small$\delta$};
  \end{axis}
\end{tikzpicture}
\qquad
\begin{tikzpicture}
  \begin{axis}[
    x=1cm,
    y=.75cm,
    axis x line=middle,
    axis y line=middle,
    xtick=\empty,
    ytick=\empty,
    xlabel={$y$},
    ylabel={},
    axis line style=thick,
    ymin=-1.2, ymax=2.5,
    xmin=-0.6, xmax=4.2,
    clip=false,
    every axis x label/.style={at={(current axis.right of origin)},anchor=north west,yshift=4pt,xshift=1pt}
    ]

    % S(y,y^\delta) - S(y^\dagger,y^\delta)
    \addplot[black, thick, samples=200,domain=.45:2.05] {4*(x-1.25)^2 - .25};
    \node[black] at (axis cs:3.4,2.1) {\small$S(y,y^\delta) - S(y^\dagger,y^\delta)$};

    % \mathcal{T}(y,y^\dagger) - \delta
    \addplot[gray, thick, samples=200,domain=-0.6:3.6] {.4*(x-1.5)^2 - .3};
    \node[gray] at (axis cs:3.8,0.4) {\small$\mathcal{T}(y,y^\dagger) - \delta$};

    % x-intercept of first curve: x = 1.5
    \addplot+[mark=*,black,only marks] coordinates {(1.5,0)};
    \node[above left] at (axis cs:1.5,0) {\small$y^\dagger$};

    % Measurement bar from
    \draw[thick] (axis cs:3.6,0) -- (axis cs:3.6,-.3);
    % Add hook-style bars at ends
    \draw[thick] (axis cs:3.55,-.3) -- (axis cs:3.65,-.3);
    % Label the bar with delta
    \node[right] at (axis cs:3.6,-0.17) {\small$\delta$};
  \end{axis}
\end{tikzpicture}
\caption{The sketch illustrates that for smaller $\delta$ the minimal value of $S(\cdot,y^\delta)$ approaches $S(y^\dagger,y^\delta)$. Thus for small $\delta$, $y^\dagger$ \emph{almost} minimizes $S(\cdot,y^\delta)$. Depending on the specific properties of $S$, this implies information on how close $y^\delta$ is to $y^\dagger$.}
\label{fig:noiseQuantification}
\end{figure}

For rates of convergence we need to generalize the previous variational source conditions.
\begin{assumption}[Conditions for convergence rates VII, \cite{Werner-Hohage}]\label{assumption Hohage}
% 	assumption \ref{setting poschl} holds and $J$ is also convex. There exists a constant $\delta \geq 0$ such that
% 	\begin{align*}
% 		\mathrm{Err}(z)(y^\dagger,y^\delta) \leq \delta
% 	\end{align*}
% 	independently of $z$, with $\mathrm{Err} $ as defined in \eqref{eqn:GeneralDefinitionErr}.
\begin{enumerate}
\item
Let $\phi$ be a concave index function (\ie monotonically increasing with $\phi(0)=0$).
\item
Let $x^\dagger$ be a $J$-minimizing solution, and let there exist $\xi \in \partial J(x^\dagger)$ and $\beta_1\in[0,1)$ such that %and some $r>0$
% There exists an index function $\phi$ (\ie $\phi(0) = 0 $ and it is monotonically increasing) such that $\phi$ is concave,  as well as a parameter $\beta > 0$ and $\xi \in \partial J(x^\dagger)$ and some $r>0$, such that for all $ x \in \mathcal{D}(F) \cap \mathcal{B}_r(x^\dagger)$
	\begin{equation} \label{eqn:VariationalSourceCondition}
		%\beta  \mathcal{D}_J^\xi(x, x^\dagger) \leq J(x) - J(x^\dagger) + \phi\left( \mathcal{T}(F(x_\alpha^\delta),F(x^\dagger))\right)
		-\langle\xi,x-x^\dagger\rangle  \leq \beta_1\mathcal{D}_J^\xi(x, x^\dagger) + \phi\left( \mathcal{T}(F(x),y^\dagger)\right)
    \qquad\text{for all }x\in X.
	\end{equation}
\info{is equivalent to original formulation under $\beta=1-\beta_1$}
\end{enumerate}
\end{assumption}

% In order to formulate the general convergence result, we need the Fenchel conjugate of a function $\gamma: \mathbb{R} \to (-\infty,\infty]$, namely
% \begin{align*}
% 	\gamma^*(s) = \sup_{\tau \in \mathbb{R}} (s \tau - \gamma(\tau)).
% \end{align*}

\begin{theorem}[Convergence rate VII, \protect{\cite[Thm.\,3.3]{Werner-Hohage}}] \label{convergence rates Hohage}
	Let assumptions \ref{setting poschl} and \ref{assumption Hohage} hold and $\mathrm{Err}(y)(y^\dagger,y^\delta) \leq \delta$ for all $y\in Y$. %let $x_\alpha^\delta$ be a minimizer of the Tikhonov functional in the assumption.
  Then for the choice $-1/\alpha\in\partial (-\phi) (\delta)$ it holds
%   Then for all $\alpha>0$ we have
% 	\begin{align*}
% 		\beta \mathcal{D}_J^\xi(x_\alpha^\delta, x^\dagger) \leq \frac{ \delta}{\alpha} + (-\phi)^*(-\frac{1}{\alpha}).
% 	\end{align*}
% 	Moreover, the optimal choice $\hat\alpha$, \ie the minimizer of the right-hand side, is given if and only if
% 	\begin{align*}
% 		-\frac{1}{\hat\alpha} \in \partial (-\phi) (\delta),
% 	\end{align*}
% 	which gives the optimal rate
	$
		\mathcal{D}_J^\xi(x_{\alpha}^\delta, x^\dagger) \leq \phi(\delta)/(1-\beta_1).
	$
\end{theorem}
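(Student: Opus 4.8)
The plan is to chain together four ingredients: the minimization property of $x_\alpha^\delta$, the noise quantification through $\mathrm{Err}$, the defining identity of the Bregman distance, and the variational source condition \eqref{eqn:VariationalSourceCondition}; the resulting inequality is then turned into the claimed bound by means of the subgradient rule prescribing $\alpha$.

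First I would exploit that $x_\alpha^\delta$ minimizes $T_\alpha^\delta$, so that $T_\alpha^\delta(x_\alpha^\delta)\le T_\alpha^\delta(x^\dagger)$; since $F(x^\dagger)=y^\dagger$, this rearranges to
\[
  S(F(x_\alpha^\delta),y^\delta)-S(y^\dagger,y^\delta)\le \alpha\bigl(J(x^\dagger)-J(x_\alpha^\delta)\bigr).
\]
Inserting $y=F(x_\alpha^\delta)$ into the assumed noise bound $\mathrm{Err}(y)(y^\dagger,y^\delta)\le\delta$ yields
\[
  \mathcal{T}(F(x_\alpha^\delta),y^\dagger)\le\delta+S(F(x_\alpha^\delta),y^\delta)-S(y^\dagger,y^\delta)\le\delta+\alpha\bigl(J(x^\dagger)-J(x_\alpha^\delta)\bigr).
\]
Rewriting the penalty difference via the definition of the Bregman distance as $J(x^\dagger)-J(x_\alpha^\delta)=-\mathcal{D}_J^\xi(x_\alpha^\delta,x^\dagger)-\langle\xi,x_\alpha^\delta-x^\dagger\rangle$ and estimating $-\langle\xi,x_\alpha^\delta-x^\dagger\rangle$ through \eqref{eqn:VariationalSourceCondition} evaluated at $x=x_\alpha^\delta$, I obtain after collecting terms and abbreviating $t\defeq\mathcal{T}(F(x_\alpha^\delta),y^\dagger)\ge0$ that
\[
  \alpha(1-\beta_1)\,\mathcal{D}_J^\xi(x_\alpha^\delta,x^\dagger)\le \delta-t+\alpha\phi(t).
\]

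The decisive step is to bound the right-hand side uniformly in $t\ge0$. Since $\phi$ is concave, $-\phi$ is convex, and the parameter choice $-1/\alpha\in\partial(-\phi)(\delta)$ is precisely the subgradient inequality $-\phi(t)\ge-\phi(\delta)-(t-\delta)/\alpha$ for all $t$; multiplying by $\alpha>0$ and rearranging gives $\delta-t+\alpha\phi(t)\le\alpha\phi(\delta)$. Substituting this and dividing by $\alpha(1-\beta_1)>0$ produces exactly $\mathcal{D}_J^\xi(x_\alpha^\delta,x^\dagger)\le\phi(\delta)/(1-\beta_1)$.

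I expect the only genuine obstacle to be this last step: recognizing that the opaque-looking rule $-1/\alpha\in\partial(-\phi)(\delta)$ is engineered so that the affine minorant of $-\phi$ at $\delta$ turns the concave expression $\delta-t+\alpha\phi(t)$ into the clean upper bound $\alpha\phi(\delta)$, independently of the unknown value $t=\mathcal{T}(F(x_\alpha^\delta),y^\dagger)$. The remaining steps are bookkeeping; existence of the minimizer $x_\alpha^\delta$ and the lower semicontinuity needed to make the minimization comparison rigorous are supplied by Assumption~\ref{setting poschl}.
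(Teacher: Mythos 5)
Your argument is correct and is essentially the standard proof of this result: the paper itself only cites \cite[Thm.\,3.3]{Werner-Hohage} but sketches exactly your first two steps in \cref{sec:generalizedFidelity} (bounding $\mathcal{T}(F(x_\alpha^\delta),y^\dagger)$ via $\mathrm{Err}$ and the minimization property), and your final step with the subgradient inequality for $-\phi$ at $\delta$ is just the explicit form of the Fenchel--Young equality that the reference uses to evaluate $(-\phi)^*(-1/\alpha)$. Nothing to add beyond the usual tacit assumption that $S(y^\dagger,y^\delta)$ is finite so the rearrangement is legitimate.
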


%%---------------------------------------------------------------------------------------------------
\section{Forward operator and fidelity term for drift estimation in SDE}\label{sec:forwardOp}

Recall that at fixed time points $0=t_0< t_1< \ldots< t_M = T$ we observe the position of $n$ distinguishable particles whose random motion in the smooth domain $\Omega\subset\R^d$ is governed by SDE \eqref{eqn:SDE}.
From this observation we aim to estimate the drift $\mu:\Omega\to\R^d$ (while $\sigma>0$ is assumed known).
We denote by $q_j=(q_j^0,\ldots,q_j^M)\in\Omega^{M+1}$ the measured position of the $j$th particle at times $t_0,\ldots,t_M$.
The $q_j^i$ are realizations of random variables $Q_j^i$, where $Q_j = (Q_j^0,\ldots,Q_j^M)$ are independent and identically distributed for different $j$.
For a MAP estimate of $\mu$ we will need to evaluate the log-likelihood of the measurements, for which in turn we need to find the density (\eg w.r.t.\ the Lebesgue measure) of the joint law of all $Q_j^i$.
For fixed $j$ let the probability measure
\begin{equation*}
	\nu(\d x^0,\ldots,\d x^M)
\end{equation*}
on $\Omega^{M+1}$ denote the joint law of $(Q_j^0,\ldots,Q_j^M)$.
By the Markov property of the process, the law of $Q_j^{i+1}$ only depends on the realization of $Q_j^i$ but is independent of the realizations of $Q_j^0,\ldots,Q_j^{i-1}$.
Thus, by the disintegration theorem backwards in time we obtain
\begin{equation*}
	\nu (\d x^0, \ldots,\d x^M) = \nu_{x^{M-1}} (\d x^M)\nu_{x^{M-2}} (\d x^{M-1}) \cdot\cdot\cdot \nu_{x^0} (\d x^1) \hat\nu_0(\d x^0),
\end{equation*}
where $\hat\nu_0$ denotes the law of $Q_j^0$ (which we assume to have density $g^0>0$ w.r.t.\ the Lebesgue measure)
and the measure $\nu_{x^{i-1}} (\d x^i)$ describes the law of $Q_j^i$ given $Q_j^{i-1} = x^{i-1}$.
Its Lebesgue density is given by the solution at time $t_i$ of the Fokker--Planck equation associated with the SDE,
\begin{equation}\label{eqn:FokkerPlanck}
\partial_tp=-\div(\mu p)+\Delta(\tfrac{\sigma^2}2p)
\qquad\text{in }[0,T]\times\Omega,
\end{equation}
with (total mass preserving) homogeneous Neumann boundary conditions
\begin{equation}\label{eqn:bc}
\tfrac{\sigma^2}2\nabla p\cdot\nu=(\tfrac{\sigma^2}2\nabla p-\mu p)\cdot\nu=0
\qquad\text{on }\partial\Omega\text{ for }\nu\text{ the outward normal}
\end{equation}
(recall $\mu\cdot\nu=0$ on $\partial\Omega$)
and for initial condition $p(t_{i-1})=\delta_{x^{i-1}}$ a Dirac measure in $x^{i-1}$.
This is exactly the Green's function $g_\mu(t_i,x^i;t_{i-1},x^{i-1})$ of the Fokker--Planck equation, thus we can write
\begin{equation*}
	\nu (\d x^0, \ldots,\d x^M) =g^0(x^0) \prod_{i=1}^M g_\mu(t_i,x^i;t_{i-1},x^{i-1}) \,\d x^0\cdot\cdot\cdot \d x^M.
\end{equation*}
The likelihood $S_L(q)$ of $q=(q_1,\ldots,q_n)$ (w.r.t.\ the Lebesgue measure on $(\Omega^{M+1})^n$) and the log-likelihood $S_l(q)$ thus read
% Having the density of $\nu$ at hand, we can define the likelihood of the density $g^0(x^0) \prod_{i=1}^M g_\mu(t_i,x^i;t_{i-1},x^{i-1})$ given the measurements $q_j = (q^0,\ldots,q^M)_j$, namely
\begin{equation*}
	S_L(q) =  \prod_{j=1}^n g^0(q^0_j) \prod_{i=1}^M g_\mu(t_i,q^i_j;t_{i-1},q^{i-1}_j),\qquad
% \end{align*}
% and the log-likelihood
% \begin{align*}
	S_l(q) =  \sum_{j=1}^n \left[\ln g^0(q_j^0) + \sum_{i=1}^M \ln g_\mu(t_i,q^i_j;t_{i-1},q^{i-1}_j)\right].
\end{equation*}
We now define the forward operator as
\begin{equation*}
	F : L^\infty(\Omega;\mathbb{R}^d) \longrightarrow \left(L^2(\Omega \times \Omega) \right)^{M},
  \qquad
	\mu \mapsto \begin{bmatrix}
		g_\mu(t_1,\cdot;t_0,\cdot) \\
		g_\mu(t_2,\cdot;t_1,\cdot) \\
		\vdots \\
		g_\mu(t_M,\cdot;t_{M-1},\cdot)
	\end{bmatrix},
\end{equation*}
and the measurement as the empirical probability measure of the particle distribution on $\Omega^{M+1}$,
\begin{align*} 
	G^n \defeq \frac{1}{n} \sum_{j=1}^n \delta_{q_j}.
\end{align*}
The fidelity term $S(F(\mu),G^n)$ in a MAP functional is given by the negative log-likelihood, thus
\begin{equation*}
S(F(\mu),G^n)
=-S_l(q)/n
= - \int_{\Omega^{M+1}} \left[\ln g^0(x^0) + \sum_{i=1}^M \ln g_\mu(t_i,x^i;t_{i-1},x^{i-1})\right]\,\d G^n(x^0,\ldots,x^M)
\end{equation*}
(note that for notational simplicity we rescaled the log-likelihood by a constant factor, which just corresponds to considering the likelihood w.r.t.\ a rescaled base measure).
As function space setting for $S$ we can for instance pick the following:
Let $\mathcal{MZ}(\Omega)$ denote the convex set of Markov kernels from $\Omega$ to $\Omega$ which are absolutely continuous w.r.t.\ the Lebesgue measure.

Note that $F_i(\mu)$ lies in that set, since the map $F_i(\mu)(\cdot,x) = g_\mu(t_i,\cdot;t_{i-1},x)$ is a probability densitiy on $\Omega$ for every $x \in \Omega$.
Note also that if drift $\mu$ in \eqref{eqn:FokkerPlanck} is divergence-free, then $F_i(\mu)$ is even a doubly stochastic Markov kernel.
Indeed, this can be seen from $g_\mu(t_i,x;t_{i-1},\cdot)$ being the Green's function for the adjoint equation to \eqref{eqn:FokkerPlanck}, which for $\div\mu=0$ is of the same type (except for being backward parabolic).
Equivalently one can check that the solution $p(t,x)$ to \eqref{eqn:FokkerPlanck} for constant initial condition $p(t_{i-1},\cdot)\equiv1$
satisfies $p(t,\cdot)\equiv1$ if and only if $\div\mu=0$, thus also $1=p(t_i,x)=\int g_\mu(t_i,x;t_{i-1},x^{i-1}) \,\d x^{i-1}$, implying the double stochasticity.
Let further $\mathcal{P}(\Omega^{M+1})$ be the set of probability measures on $\Omega^{M+1}$ and notice that $\mathcal{MZ}(\Omega)^M$ can be understood as a subset of $\mathcal{P}(\Omega^{M+1})$, since for $(z_1,\ldots,z_M)^T \in \mathcal{MZ}(\Omega)^M$ the product $z^\pi \defeq g^0\prod_{i=1}^M  z_i$ is a probability density on $\Omega^{M+1}$ w.r.t.\ the Lebesgue measure. Now we set
\begin{align*}
	S: \mathcal{MZ}(\Omega)^M \times &\mathcal{P}(\Omega^{M+1}) \to \mathbb{R},\qquad
	\left(\begin{pmatrix}
		z_1 \\
		\vdots \\
		z_M
	\end{pmatrix} , G^n\right) \mapsto \begin{cases}
		-\int_{\Omega^{M+1}} \ln  z^\pi  \,\d G^n & z^\pi >  0   \text{ $G^n$-a.e.}\\
		\infty & \text{else}.
	\end{cases} %\\
% 	&= \begin{cases}
% 		-\int  \left[\ln g^0(q_j^0) + \sum_{i=1}^M \ln g_\mu(t_i,q^i_j;t_{i-1},q^{i-1}_j)\right] \,\d G^n & z >  0   \textit{ a.e.}\\
% 		\infty & \textit{else},
% 	\end{cases}
\end{align*}
Let us further recall the Kullback--Leibler divergence $\mathrm{KL}(z,u)=\int u\ln\frac uz+z-u\,\d x$ and consider its version
\begin{equation*}
	\mathrm{KL}^\pi: \mathcal{MZ}(\Omega)^M \times \mathcal{MZ}(\Omega)^M \to \mathbb{R},\quad
	\left(\begin{pmatrix}
		z_1 \\
		\vdots \\
		z_M
	\end{pmatrix} , \begin{pmatrix}
		u_1 \\
		\vdots \\
		u_M
	\end{pmatrix}\right) \mapsto  \mathrm{KL}(z^\pi,u^\pi)
	=\int_{\Omega^{M+1}} \left(u^\pi \ln\frac{u^\pi}{z^\pi} + z^\pi - u^\pi\right)\, \d x.
\end{equation*}
$\mathrm{KL}^\pi$ will play the role of $\mathcal T$ from the previous section,
so the error functional to measure the difference between the exact data $g^\dagger\defeq F(\mu^\dagger) \in \mathcal{MZ}(\Omega)^M$ and the measurement $G^n \in \mathcal{P}(\Omega^{M+1})$ is taken as
\begin{align}\label{eqn:Err}
	\mathrm{Err}(y)(g^\dagger,G^n) = \mathrm{KL}^\pi(y,g^\dagger) - S(y,G^n) + S(g^\dagger,G^n) = \int \ln \frac{y^\pi}{(g^\dagger)^\pi} \ (\d G^n - (g^\dagger)^\pi \d x).
\end{align}    
To use the general convergence rate result from last section, we would need to bound $\mathrm{Err}(y)(g^\dagger,G^n)$ independently of $y$.
To this end we will in the next section need to slightly alter the fidelity term.
Also, since $G^n$ is a random variable, we can only hope for a probabilistic bound so that in turn the resulting convergence statement (which holds under the variational source condition \eqref{eqn:VariationalSourceCondition}) will be probabilistic.

%%---------------------------------------------------------------------------------------------------
\section{Probabilistic convergence rates under stochastic bounds on the noise}\label{sec:probabilisticRate}
Here we aim to apply \cref{convergence rates Hohage}
(or, appealing to remark \ref{rem:localConditions}, its version when restricting the reconstruction to some $\mathcal B$)
to our specific setting in which the noise can only be quantified in a probabilistic sense.
To this end we can follow \cite{Dunker-Hohage}, who considered the stationary version of our problem.
We first specialize assumption \ref{setting poschl} towards our setting as follows:
\begin{enumerate}
	\item $(X,\| \cdot \|_X)$ is a Banach space and $\mathcal{B} \subset X$ is a convex subset and $\tau_X$ is the weak topology on $X$.
  (In our specific parameter identification problem, $\mathcal B$ is the set of allowed drifts on $\Omega$ and will later be taken as a ball in some Hilbert--Sobolev space.)
  \info{for us later $(H^r,\| \|_{H^r})$ with $r$, such that a drift here is also in $L^\infty$. $r>d/2 +1$ implies $C^{1,\alpha}$ Alt Thm.\,8.13. So we get the estimates and If $\Omega$ bounded we get also $L^\infty$. $r$ also has to be large enough, such that weak convergence in $H^r$ implies strong convergence in $H^{\tilde{r}}$, where $\tilde{r}$ is relevant in the continuity of $F$.}
	
	\item $Y = (H^s(\Omega \times \Omega)^M,\| \cdot \|_{H^s(\Omega \times \Omega)^M})$ for a bounded smooth domain $\Omega \subset \mathbb{R}^d$ (in fact, within this section Lipschitz regularity would be sufficient), where $s > \frac{(M+1)d}{2}$. Moreover, $\tau_Y$ is the strong topology in $Y$. \info{ $s > \frac{(M+1)d}{2}$ implies $C^{0,\alpha}$ with Alt}
	
	\item $F: (X,\| \cdot \|_X) \supset \mathcal{B} \longrightarrow Y$ is continuous with respect to $\tau_X$ and $\tau_Y$.
	\info{for us the continuity w.r.t. $L^\infty$ implies continuity w.r.t. $\tau_X$ since this is a stronger topology, again if $r$ is large enough. We actually proved continuity from $H^{\tilde{r}}$ to $H^s$ for an $\tilde{r} \geq s, d/2 +1$, for this reason we choose $r$, such that $\tau_X$ is stronger than the strong topology of $H^{\tilde{r}}$.}
	
	\item We assume $\sup_{y \in \mathcal{B}} \| F(y) \|_{H^s(\Omega \times \Omega)^M} < Q$ for some $Q > 0$.
	\item The regularization functional $J:\mathcal B\to \mathbb{R}$ is convex and sequentially lower semi-continuous with precompact sublevel sets in the topology $\tau_X$. \info{for us $J = \| \|_{H^r}^2$, so it holds with $\tau_X$ being weak topology in $H^r$}

  \item
  We take $Z=\mathcal P(\Omega^{M+1})$ the probability measures on $\Omega^{M+1}$ with $\tau_Z$ the weak-* topology.

  \item
  The fidelity functional $S_\tau$ (which will replace $S$ and will be introduced momentarily) satisfies assumption \ref{setting poschl} (2).

	\item In addition we will make use of the fact that $F(x)\in\mathcal{MZ}(\Omega)^M$ for all $x \in \mathcal B$, and we assume $g^\dagger\in F(\mathcal B)$ and $g^0 \in H^s(\Omega)\cap\mathcal P(\Omega)$ (\ie a probability density of $H^s$-regularity).% such that the probability density $g^0\prod_{i = 1}^{M}F_i(x)$ on $\Omega^{M+1}$ is in $H^s(\Omega^{M+1})$.
\end{enumerate}

To bound $\mathrm{Err}(y)(g^\dagger,G^n)$, we first note that $G^n$ is the empirical measure of $n$ realizations of the measure $(g^\dagger)^\pi \d x$. Hence to estimate the right-hand side of \eqref{eqn:Err} we consider the following result.
\begin{theorem}[Concentration inequality, \protect{\cite[Cor.\,5]{Dunker-Hohage}}] \label{corollary Massart Hohage}
	Let $B_{\mathfrak r}^s$ be the ball of radius ${\mathfrak r}$ in $H^s(\Omega^{M+1})$ with $s>(M+1)d/2$. There exists a constant $C \geq 1$ depending only on $\Omega$ and $s$ such that for $\rho \geq{\mathfrak r}C$ and for all $n \in \N$ it holds
	\begin{align*}
		\mathbb{P} \left[ \sup_{y \in B_{\mathfrak r}^s} \left| \int y (d G^n - (g^\dagger)^\pi \d x)\right|  \geq \frac{\rho}{\sqrt{n}}  \right] \leq \exp{\left(-\frac{\rho}{\mathfrak r C}\right)}.
	\end{align*}
\end{theorem}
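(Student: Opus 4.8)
The plan is to recognize the quantity
$\int y\,(\d G^n - (g^\dagger)^\pi\,\d x) = \tfrac1n\sum_{j=1}^n y(q_j) - \mathbb{E}[y(q_1)]$
as a centered empirical process indexed by the Sobolev ball $B_{\mathfrak r}^s$ (recall that the $q_j$ are i.i.d.\ with density $(g^\dagger)^\pi$ on $\Omega^{M+1}$, so that $\int y\,(g^\dagger)^\pi\,\d x = \mathbb{E}[y(q_1)]$). Its supremum will be controlled by combining two ingredients: a bound on its expectation via metric entropy, and a Talagrand/Bousquet-type concentration inequality (in the explicit-constant form due to Massart) around that expectation.

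First I would exploit the hypothesis $s>(M+1)d/2$, which is exactly the threshold for the Sobolev embedding $H^s(\Omega^{M+1})\hookrightarrow C^{0,\alpha}(\Omega^{M+1})$ on the bounded smooth domain $\Omega^{M+1}$. This yields that every $y\in B_{\mathfrak r}^s$ is uniformly bounded, $\|y\|_\infty\le c_0\mathfrak r$, and, since $y$ is integrated against a bounded probability density, its variance obeys $\mathrm{Var}(y(q_1))\le c_1\mathfrak r^2$. Thus the index class is uniformly bounded and has uniformly bounded variance, both at the scale $\mathfrak r$, with constants depending only on $\Omega$ and $s$.

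Next I would bound $\mathbb{E}\big[\sup_{y\in B_{\mathfrak r}^s}|\tfrac1n\sum_j(y(q_j)-\mathbb{E}y)|\big]$ by symmetrization followed by Dudley's entropy integral, so that it is dominated by $\tfrac{c}{\sqrt n}\int_0^{c_0\mathfrak r}\sqrt{\log N(\varepsilon,B_{\mathfrak r}^s,\|\cdot\|_\infty)}\,\d\varepsilon$. The classical metric-entropy estimate for Sobolev balls gives $\log N(\varepsilon,B_{\mathfrak r}^s,\|\cdot\|_\infty)\lesssim(\mathfrak r/\varepsilon)^{(M+1)d/s}$, and because $s>(M+1)d/2$ the exponent $(M+1)d/s$ is strictly below $2$; hence the entropy integral converges and scales linearly in $\mathfrak r$, giving $\mathbb{E}[\sup\ldots]\le C_2\,\mathfrak r/\sqrt n$ with $C_2=C_2(\Omega,s)$. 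Massart's version of Talagrand's inequality, applied to this bounded, bounded-variance class, then furnishes a Bernstein-type tail above the mean that interpolates between a Gaussian regime at small deviations and an exponential regime at large ones, with the crossover at scale $\sim\mathfrak r/\sqrt n$. For $\rho\ge\mathfrak r C$ the threshold $\rho/\sqrt n$ exceeds both the mean bound $C_2\mathfrak r/\sqrt n$ and the crossover scale, placing us in the exponential regime, where the tail takes the asserted form $\exp(-\rho/(\mathfrak r C))$ after absorbing the boundedness and variance constants into $C$.

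The main obstacle I anticipate is precisely the bookkeeping that produces the clean linear-in-$\rho$ exponent $\exp(-\rho/(\mathfrak r C))$ with a single constant depending only on $\Omega$ and $s$: one must verify that in the regime $\rho\ge\mathfrak r C$ the sub-Gaussian part of the Talagrand bound is genuinely dominated by the Poissonian/boundedness part (so the exponent is linear, not quadratic, in $\rho$), and that the embedding constant, the entropy constant $C_2$, and the Talagrand constants can all be consolidated into one $C=C(\Omega,s)\ge1$ — all while keeping the estimate uniform over every $n\in\N$.
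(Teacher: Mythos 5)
The paper gives no proof of this statement---it is imported verbatim as \cite[Cor.~5]{Dunker-Hohage}---and your sketch reproduces exactly the route by which that cited result is established: Sobolev embedding of $H^s(\Omega^{M+1})$ into $C^0$ under $s>(M+1)d/2$ to get uniform bounds and variances of order $\mathfrak r$, a Dudley entropy-integral bound $\mathbb{E}[\sup]\lesssim \mathfrak r/\sqrt n$ made convergent by the entropy exponent $(M+1)d/s<2$, and Massart's form of Talagrand's inequality with the deviation level $\rho\geq \mathfrak r C$ chosen so that the expectation, sub-Gaussian, and Poissonian terms are all absorbed into $\rho/\sqrt n$. Your identification of the final bookkeeping (checking that for $\rho\geq\mathfrak r C$ the linear-in-$\rho$ exponent dominates) as the only delicate point is accurate, so the proposal is correct and essentially coincides with the proof in the cited reference.
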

\info{One can probably reduce the condition on $s$ to $s>d/2$ on the expense of exploiting that $y$ is of the form $\ln(\tau+\text{product of $H^s$-functions})/(\tau+\text{product of $H^s$-functions})$.}
However we cannot use this result directly to bound \eqref{eqn:Err}, because $\ln \frac{z^\pi}{(g^\dagger)^\pi}$ is not bounded without assuming the reconstuctions $z^\pi$ to be bounded away from zero. For this reason we consider a modification of our setting. From now on, as in \cite{Dunker-Hohage}, we will work with the following functionals modified by a shift parameter $\tau > 0$,
\begin{align*}
	S_\tau\!:\mathcal{MZ}(\Omega)^M \!\times\! \mathcal{P}(\Omega^{M+1}) &\to \mathbb{R},&&&
	S_\tau(y,G^n) &\defeq -\int \ln  (y^\pi +\tau) \  (G^n +\tau \d x),\\
	\mathrm{KL}^\pi_\tau\!:\mathcal{MZ}(\Omega)^M \times \mathcal{MZ}(\Omega)^M &\to \mathbb{R},  &&&
	\mathrm{KL}^\pi_\tau(y,u) &\defeq\mathrm{KL}(y^\pi\!\!+\!\tau,u^\pi\!\!+\!\tau)= \!\int\! (u^\pi\!\!+\!\tau) \ln\frac{u^\pi\!\!+\!\tau}{y^\pi\!\!+\!\tau} \!+\! y^\pi \!-\! u^\pi\,\d x.
\end{align*}
\begin{remark}[Properties of $S_\tau$]
Note that $S_\tau$ indeed satisfies the properties of assumption \ref{setting poschl} (2) if $g^0\in H^s(\Omega)$ and $g^\dagger\in H^s(\Omega\times\Omega)^M$.
Indeed, for $y_n\to y$ in $H^s(\Omega\times\Omega)^M$ and thus by Sobolev--H\"older embedding also in $C^0(\Omega\times\Omega)^M$ we obtain $y_n^\pi\to y^\pi$ in $C^0(\Omega^{M+1})$
and thus $\ln\frac{y_n^\pi+\tau}{(g^\dagger)^\pi+\tau}\to\ln\frac{y^\pi+\tau}{(g^\dagger)^\pi+\tau}$ in $C^0(\Omega^{M+1})$.
Together with $G^n\stackrel*\rightharpoonup g$ one obtains $S_\tau(y_n,G^n)\to S_\tau(y,g)$ so that $S_\tau$ is continuous w.r.t.\ $\tau_Y\otimes\tau_Z$.
Moreover, the range of $F$ over $\mathcal B$ contains only uniformly bounded nonnegative continuous functions so that $S_\tau$ is bounded below.
\end{remark}
We correspondingly modify the Tikhonov functional to
\begin{align} \label{eqn:TikhonovFunctionalShifted}
	T_{\alpha}^n(x) \defeq S_\tau(F(x), G^n) + \alpha J(x)\qquad\text{for }x\in\mathcal B.
\end{align}
The advantage of introducing the shift parameter $\tau$ becomes clear when calculating $\mathrm{Err}$ for the shifted fidelity functional,
\begin{align*}
	\mathrm{Err}_\tau(y)(g^\dagger,G^n) &\defeq \mathrm{KL}_\tau(y,g^\dagger) - S_\tau(y,G^n) + S_\tau(g^\dagger,G^n) = \int \ln \frac{y^\pi + \tau}{(g^\dagger)^\pi + \tau} \ (G^n - (g^\dagger)^\pi \d x).
\end{align*}
This can indeed be bounded probabilistically, since all probability densities $y$ fulfil $y^\pi +\tau > \tau > 0$ and thus $\ln \frac{y^\pi + \tau}{(g^\dagger)^\pi + \tau}\in H^s$ if $y^\pi$ and $g^\dagger$ are. %It follows as a consequence of corollary \ref{corollary Massart Hohage} in fact
\begin{corollary}[Probabilistic shifted error bound] \label{corollary probabilistic bound for Err_sigma}
	Let $g^\dagger\in B_Q^s$, the ball of radius $Q$ in $H^s(\Omega\times\Omega)^M$ with $s>(M+1)d/2$. There exists $C \geq 1$ depending only on $\Omega$, $s$, and $g^0\in H^s(\Omega)$ such that for $\rho \geq Q^MC$ and for all $n \in \N$ it holds
	\begin{align*}
		\mathbb{P} \left[ \sup_{y \in B_Q^s,y\geq0} \left| \mathrm{Err}_\tau(y)(g^\dagger,G^n) \right|  \geq \frac{\rho}{\sqrt{n}}  \right] \leq \exp{\left(-\frac{\rho}{Q^MC}\right)}.
	\end{align*}    
\end{corollary}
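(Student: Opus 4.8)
The plan is to reduce the statement to the concentration inequality \cref{corollary Massart Hohage} by recognizing the shifted error functional as the integral of a single $H^s$-function against the centred empirical fluctuation $\d G^n-(g^\dagger)^\pi\d x$. From the displayed formula for $\mathrm{Err}_\tau$ we have
\begin{equation*}
\mathrm{Err}_\tau(y)(g^\dagger,G^n)=\int_{\Omega^{M+1}} w_y\,\bigl(\d G^n-(g^\dagger)^\pi\d x\bigr),\qquad w_y\defeq\ln\frac{y^\pi+\tau}{(g^\dagger)^\pi+\tau}.
\end{equation*}
Since both $G^n$ and $(g^\dagger)^\pi\d x$ are probability measures on $\Omega^{M+1}$, the signed measure $\d G^n-(g^\dagger)^\pi\d x$ has vanishing total mass, so adding constants to $w_y$ leaves the integral unchanged; the integrand thus enters only through its $H^s$-regularity. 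If I can exhibit a radius $\mathfrak r$, growing like $Q^M$, such that $w_y\in B_{\mathfrak r}^s\subset H^s(\Omega^{M+1})$ for every admissible $y$, then the event $\{\sup_{y}|\mathrm{Err}_\tau(y)(g^\dagger,G^n)|\ge\rho/\sqrt n\}$ is contained in $\{\sup_{w\in B_{\mathfrak r}^s}|\int w\,(\d G^n-(g^\dagger)^\pi\d x)|\ge\rho/\sqrt n\}$, and \cref{corollary Massart Hohage} applies to the latter verbatim.

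The technical core is therefore the uniform bound $\|w_y\|_{H^s(\Omega^{M+1})}\le\mathfrak r\lesssim Q^M$ over $y\in B_Q^s$ with $y\ge0$. First I would view each factor of $y^\pi=g^0\prod_{i=1}^M y_i$ as a function on the full product $\Omega^{M+1}$ (depending on only one, resp.\ two, of the coordinates); its $H^s(\Omega^{M+1})$-norm is comparable to the corresponding $H^s(\Omega)$-, resp.\ $H^s(\Omega\times\Omega)$-norm, up to a constant depending on $|\Omega|$ and $M$ from the inert variables. Because $s>(M+1)d/2$, the space $H^s(\Omega^{M+1})$ is a Banach algebra, so
\begin{equation*}
\|y^\pi\|_{H^s(\Omega^{M+1})}\le C_{\mathrm{alg}}^{M}\,\|g^0\|_{H^s(\Omega)}\prod_{i=1}^M\|y_i\|_{H^s(\Omega\times\Omega)}\lesssim \|g^0\|_{H^s}\,Q^M,
\end{equation*}
and likewise $\|(g^\dagger)^\pi\|_{H^s}\lesssim\|g^0\|_{H^s}Q^M$; the Sobolev--Hölder embedding $H^s\hookrightarrow C^0$ then yields the same order of bound for $\|y^\pi\|_{L^\infty}$ and $\|(g^\dagger)^\pi\|_{L^\infty}$. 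Since $y\ge0$ and $g^0>0$ force $y^\pi\ge0$, the argument of the logarithm stays in $[\tau,K_0+\tau]$ with $K_0\lesssim Q^M$, on which $t\mapsto\ln(t+\tau)$ is smooth. A Moser-type composition estimate (the Sobolev algebra property together with the Faà di Bruno chain rule and Gagliardo--Nirenberg interpolation in $H^s$) then gives $\|\ln(y^\pi+\tau)\|_{H^s}\le C(\tau,K_0)\,(1+\|y^\pi\|_{H^s})$, and subtracting the fixed term $\ln((g^\dagger)^\pi+\tau)$ (bounded in the same way) produces $\|w_y\|_{H^s}\le\mathfrak r$ with $\mathfrak r=c\,Q^M$ for a constant $c$ depending on $\Omega,s,g^0$ and the fixed shift $\tau$.

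With this radius in hand I would invoke \cref{corollary Massart Hohage} for the ball $B_{\mathfrak r}^s$, whose constant $C_0\ge1$ depends only on $\Omega$ and $s$: for $\rho\ge\mathfrak r C_0=cC_0Q^M$ it yields the bound $\exp(-\rho/(\mathfrak r C_0))=\exp(-\rho/(cC_0Q^M))$, and by the inclusion of events noted above the same bound controls $\mathbb P[\sup_{y\in B_Q^s,\,y\ge0}|\mathrm{Err}_\tau(y)(g^\dagger,G^n)|\ge\rho/\sqrt n]$. Setting $C\defeq cC_0\ge1$ (enlarging if necessary) reproduces both the threshold $\rho\ge Q^MC$ and the exponent $-\rho/(Q^MC)$ in the claim.

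I expect the composition estimate of the second paragraph to be the \emph{main obstacle}: unlike the product bound, which is immediate from the algebra property, controlling $\ln(y^\pi+\tau)$ in $H^s$ requires a nonlinear Moser/chain-rule estimate and careful bookkeeping to extract exactly the power $Q^M$ (it is the $M$-fold product that produces it) while keeping the dependence on the strictly positive lower bound $\tau$ of the argument transparent — this dependence enters through the derivatives $\ln^{(k)}(t+\tau)=(-1)^{k-1}(k-1)!/(t+\tau)^k$, so it is the source of any residual $\tau$-dependence of the constant. The remaining steps (the algebra estimate, the embedding, the event inclusion, and the final application of the concentration inequality) are routine once the radius $\mathfrak r\lesssim Q^M$ is established.
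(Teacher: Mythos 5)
Your proposal is correct and takes essentially the same route as the paper's proof: bound $w_y=\ln\frac{y^\pi+\tau}{(g^\dagger)^\pi+\tau}$ uniformly in $H^s(\Omega^{M+1})$ via the Sobolev algebra property for $s>(M+1)d/2$ and the smoothness of $z\mapsto\ln(z+\tau)$ on $[0,\infty)$, then apply the concentration inequality of \cref{corollary Massart Hohage} with radius $\mathfrak r\propto Q^M$. The paper's proof is a terse version of exactly this argument, and the Moser-type composition estimate you flag as the main obstacle is precisely the step the paper leaves implicit.
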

\begin{proof}
This is a direct consequence from the fact that boundedness and nonnegativity of $y$ in $H^s(\Omega\times\Omega)^M$
implies boundedness and nonnegativity of $y^\pi$ in $H^s(\Omega^{M+1})$ (since multiplication of functions in $H^s(\Omega\times\Omega)$ is bounded for $s>\frac d2+1$)
and thus boundedness of $\ln \frac{y^\pi + \tau}{(g^\dagger)^\pi + \tau}=\ln(y^\pi)-\ln((g^\dagger)^\pi+\tau)$ (since $z\mapsto\ln(z+\tau)$ is smooth for $z\geq0$).
Hence \cref{corollary Massart Hohage} can be applied for an appropriate choice of ${\mathfrak r}$.
\end{proof}

Assuming the variational source condition, this allows to obtain the following result by Dunker and Hohage about the convergence rate in expectation.
\begin{theorem} [Convergence rate in expectation, \protect{\cite[Thm.\,6]{Dunker-Hohage}}]\label{convergence rates Dunker Expectation}
  Let assumption \ref{assumption Hohage} hold for $\mathcal T=\mathrm{KL}^\pi_\tau$ and $y^\dagger=g^\dagger$, and let
	\begin{equation*}
		a \defeq \left.\frac{Q^MC}{1-\beta_1} \sum_{k=1}^\infty k\exp (-(k-1))\right/\sum_{k=1}^\infty \exp (-(k-1))>1
    \qquad\text{for $C$ from corollary \ref{corollary probabilistic bound for Err_sigma}.}
	\end{equation*}
  Then for the choice $-1/\alpha\in\partial (-\phi) (\frac{a}{\sqrt{n}})$, the minimizer $x_\alpha^n$ of \eqref{eqn:TikhonovFunctionalShifted} satisfies
% 	Let $x_\alpha^n$ be a minimizer of the Tikhonov functional in \eqref{eqn:TikhonovFunctionalShifted} with $n$ denoting the number of observations. Let $x^\dagger$ be a $J$-minimizing solution of $F(x) = g^\dagger$ and let it fulfil a variational source condition of the form
% 	\begin{align} \label{eqn:VariationalSourceConditionKL}
% 		\beta  \mathcal{D}_J^\xi(x, x^\dagger) \leq J(x) - J(x^\dagger) + \phi\left( \mathrm{KL}^\pi_\tau(F(x) , g^\dagger)\right) \textit{ for all } x \in \mathcal{D}(F)
% 	\end{align}
% 	with $\phi, \xi$ and $\beta$ as in assumption \ref{assumption Hohage}. Let $\alpha$ be chosen such that
% 	\begin{align*}
% 		-\frac{1}{\alpha} \in \partial (-\phi) (\frac{a}{b\sqrt{n}}),
% 	\end{align*}
% 	where
% 	\begin{align*}
% 		a \defeq \frac{KC}{\beta} \left(\sum_{k=1}^\infty k\exp (-(k-1))\right) \  \textit{ and } \   b \defeq \left(\sum_{k=1}^\infty \exp (-(k-1)) \right)
% 	\end{align*}
% 	Then
	$
		\mathbb{E} \left[ \mathcal{D}_J^\xi(x_\alpha^n, x^\dagger) \right] \lesssim \phi\left( \frac{a}{\sqrt{n}} \right)\lesssim\phi\left( \frac1{\sqrt{n}} \right).
	$
\end{theorem}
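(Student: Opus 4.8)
The plan is to run the deterministic argument behind \cref{convergence rates Hohage} pathwise in the random measurement $G^n$, while keeping the realization-dependent noise level
\[
N\defeq\sup_{y\in B_Q^s,\,y\ge0}\bigl|\mathrm{Err}_\tau(y)(g^\dagger,G^n)\bigr|
\]
explicit in place of the deterministic bound $\delta$, and then to control its expectation via the concentration inequality of \cref{corollary probabilistic bound for Err_sigma}. First I would use the minimization property $T_\alpha^n(x_\alpha^n)\le T_\alpha^n(x^\dagger)$ of the minimizer $x_\alpha^n$ of \eqref{eqn:TikhonovFunctionalShifted}, together with $F(x^\dagger)=g^\dagger$, to obtain $\alpha\bigl(J(x_\alpha^n)-J(x^\dagger)\bigr)\le S_\tau(g^\dagger,G^n)-S_\tau(F(x_\alpha^n),G^n)$. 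Splitting $J(x_\alpha^n)-J(x^\dagger)=\mathcal D_J^\xi(x_\alpha^n,x^\dagger)+\langle\xi,x_\alpha^n-x^\dagger\rangle$, inserting the variational source condition of \cref{assumption Hohage} with $\mathcal T=\mathrm{KL}^\pi_\tau$, and using the rearrangement $S_\tau(g^\dagger,G^n)-S_\tau(y,G^n)=\mathrm{Err}_\tau(y)(g^\dagger,G^n)-\mathrm{KL}^\pi_\tau(y,g^\dagger)$ that is built into the definition of $\mathrm{Err}_\tau$, yields the pathwise estimate
\[
\alpha(1-\beta_1)\,\mathcal D_J^\xi(x_\alpha^n,x^\dagger)\le \mathrm{Err}_\tau(F(x_\alpha^n))(g^\dagger,G^n)+\alpha\phi(t)-t,\qquad t\defeq\mathrm{KL}^\pi_\tau(F(x_\alpha^n),g^\dagger)\ge0 .
\]

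Next I would observe that $F(x_\alpha^n)$ is nonnegative and lies in $B_Q^s$ (since $F(\mathcal B)\subset\mathcal{MZ}(\Omega)^M$ and $\sup_{y\in\mathcal B}\|F(y)\|_{H^s}<Q$), so that the first right-hand term is at most $N$. The choice $-1/\alpha\in\partial(-\phi)(a/\sqrt n)$ means $1/\alpha$ is a supergradient of the concave $\phi$ at $\bar\delta\defeq a/\sqrt n$, hence $\alpha\phi(t)-t\le\alpha\phi(\bar\delta)-\bar\delta$ for every $t\ge0$. Combining gives the purely pathwise bound
\[
\mathcal D_J^\xi(x_\alpha^n,x^\dagger)\le\frac{1}{1-\beta_1}\Bigl(\phi(\bar\delta)+\tfrac{N-\bar\delta}{\alpha}\Bigr),
\]
valid for every realization of $G^n$. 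As $\alpha$ is deterministic, taking expectations reduces the whole theorem to establishing $\mathbb E[N]\le\bar\delta=a/\sqrt n$: then the linear correction term is non-positive in expectation and $\mathbb E\!\left[\mathcal D_J^\xi(x_\alpha^n,x^\dagger)\right]\le\phi(a/\sqrt n)/(1-\beta_1)$.

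To bound $\mathbb E[N]$ I would use the layer-cake formula and dominate the (non-increasing) tail on dyadic-free blocks of width $Q^MC$ in the rescaled variable $\rho=\sqrt n\,t$, bounding each block by the tail at its left endpoint: trivially by $1$ on the first block and by $\exp(-(k-1))$ on the $k$-th block for $k\ge2$ via \cref{corollary probabilistic bound for Err_sigma}, which applies there because the argument $(k-1)Q^MC$ exceeds $Q^MC$. This produces exactly the geometric series
\[
\mathbb E[N]=\int_0^\infty\mathbb P[N\ge t]\,\d t\le\frac{Q^MC}{\sqrt n}\sum_{k=1}^\infty e^{-(k-1)}=\frac{Q^MC}{\sqrt n}\,\frac{\sum_{k=1}^\infty k\,e^{-(k-1)}}{\sum_{k=1}^\infty e^{-(k-1)}}=(1-\beta_1)\,\frac{a}{\sqrt n}\le\frac{a}{\sqrt n},
\]
where the middle identity is the elementary fact $\sum_{k}e^{-(k-1)}=\sum_{k}k\,e^{-(k-1)}/\sum_{k}e^{-(k-1)}=(1-e^{-1})^{-1}$, and the last step uses $1/(1-\beta_1)\ge1$. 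This delivers the required estimate, and since $\phi$ is a concave index function and $a>1$ a fixed constant one has $\phi(a/\sqrt n)\le a\,\phi(1/\sqrt n)$, giving the claimed $\mathbb E[\mathcal D_J^\xi(x_\alpha^n,x^\dagger)]\lesssim\phi(a/\sqrt n)\lesssim\phi(1/\sqrt n)$.

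The pathwise estimate is routine, amounting to a rerun of the proof of \cref{convergence rates Hohage} with $N$ carried along. I expect the main obstacle to be the probabilistic step: extracting the \emph{expectation} bound $\mathbb E[N]\le a/\sqrt n$ from the one-sided tail bound, which is only valid above the threshold $Q^MC$. This forces the separate treatment of the first block in the layer-cake decomposition and the precise accounting of the geometric series, and it is exactly the reason the constant $a$ is inflated by the factor $1/(1-\beta_1)$, so that the noise-correction term is absorbed in expectation rather than merely with high probability.
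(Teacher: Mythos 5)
Your argument is correct and complete. Note, however, that the paper does not prove this theorem at all --- it imports it verbatim as \cite[Thm.\,6]{Dunker-Hohage} --- so the comparison can only be against the standard argument behind that result. Your pathwise estimate $\alpha(1-\beta_1)\,\mathcal D_J^\xi(x_\alpha^n,x^\dagger)\le \mathrm{Err}_\tau(F(x_\alpha^n))(g^\dagger,G^n)+\alpha\phi(\bar\delta)-\bar\delta$ is exactly the deterministic core of \cref{convergence rates Hohage} with the realization-dependent noise carried along, and all the ingredients you invoke (nonnegativity and $H^s$-boundedness of $F(x_\alpha^n)$ so that $\mathrm{Err}_\tau(F(x_\alpha^n))\le N$, the supergradient inequality from $-1/\alpha\in\partial(-\phi)(\bar\delta)$, and $\mathrm{KL}^\pi_\tau\ge0$) are available in the paper's setting. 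Where you genuinely deviate from the usual Dunker--Hohage bookkeeping is the probabilistic step: they condition on the shells $\{(k-1)Q^MC/\sqrt n\le N<kQ^MC/\sqrt n\}$, apply the deterministic rate with noise level $kQ^MC/\sqrt n$ on each, and use concavity of $\phi$ to extract the factor $k$ --- which is how $\sum_k k e^{-(k-1)}$ enters $a$ --- whereas you exploit that the correction term is \emph{linear} in $N$ and simply bound $\mathbb E[N]\le a/\sqrt n$ by the layer-cake formula, recovering the same constant via the identity $\sum_k e^{-(k-1)}=\sum_k k e^{-(k-1)}/\sum_k e^{-(k-1)}$. Your route is slightly cleaner for this particular (linear) correction term, at the cost of making the appearance of $\sum_k k e^{-(k-1)}$ in $a$ look coincidental rather than structural; both give $\mathbb E[\mathcal D_J^\xi(x_\alpha^n,x^\dagger)]\le\phi(a/\sqrt n)/(1-\beta_1)$, and your final reduction $\phi(a/\sqrt n)\le a\,\phi(1/\sqrt n)$ is the same concavity argument the authors indicate.
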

\info{Last inequality follows from $\phi(x)/x$ being monotonically decreasing.}
\info{To reduce the conditions on $s$ one could consider the following setting, however, then lemma \ref{thm:KLdivergenceEstimate} fails:
Introduce $u_\tau^\pi=g^0\prod_i(u_i+\tau)$ and make $\mathrm{KL}_\tau^\pi(y,u)=\mathrm{KL}(y_\tau^\pi,u_\tau^\pi)$ (and similarly adapt the fidelity term), then use that
$\mathbb{P} \left[ \sup_{y \in B_Q^s} \left| \mathrm{Err}_\tau(y)(g^\dagger,G^n) \right|  \geq \frac{\rho}{\sqrt{n}}  \right]$
is less than probability that any of the $i$ components is greater than $\rho/\sqrt n/M$
}

%%---------------------------------------------------------------------------------------------------
\section{Reducing the variational source condition to a tangential cone condition}\label{sec:reduction}
The task now is to understand better when the variational source condition \ref{assumption Hohage} for $\mathcal T=\mathrm{KL}_\tau^\pi$ holds.
Following \cite{Dunker-Hohage}, it turns out that, choosing the regularization term $J$ to be a sufficiently high Hilbert--Sobolev norm squared, the variational source condition reduces to the following simpler nonlinearity condition (which was introduced in \cite{Hanke_Neubauer_Scherzer} in the context of Landweber iteration),
called \emph{tangential cone condition} in \cite{Dunker-Hohage},
\begin{equation} \label{eqn:TangentialConeCondition}
	\left\| F(\mu^\dagger\!\!+\!h) \!-\! F(\mu^\dagger) \!-\! F'(\mu^\dagger)[h] \right\|_{L^2(\Omega \times \Omega)^M} \lesssim \| h \|_{L^\infty(\Omega;\R^d)} \| F(\mu^\dagger\!\!+\!h) \!-\! F(\mu^\dagger) \|_{L^2(\Omega \times \Omega)^M}
  \ \ \text{for all }h\in B_R^r,\!
\end{equation}
where again $B_R^r$ is a ball of some radius $R$ in $H^r(\Omega;\R^d)$ and $r$ is chosen large enough to make $F$ continuous from $H^r(\Omega;\R^d)$ to $Y$
(here, $F'(\mu^\dagger)$ stands for the Fr\'echet derivative of $F$ in $\mu^\dagger$ as a map from $L^\infty(\Omega)$ to $L^2(\Omega\times\Omega)^M$).
Indeed,
making the simplifying, yet natural assumption that
\begin{equation}\label{eqn:divergenceFreeAssumption}
\text{all time intervals $[t_{i-1},t_i]$ have equal length \qquad or \qquad all admissible drifts $\mu$ are divergence-free,}
\end{equation}

one can prove the following implication.
\begin{theorem}[Tangential cone implies variational source condition]\label{thm:tangentialConeImpliesVariationalSource}
	Let $F$ fulfil conditions \eqref{eqn:TangentialConeCondition}, \eqref{eqn:divergenceFreeAssumption} and $J(\mu) = \| \mu \|_{H^{r}}^2$ with $r>\frac{d}{2} + 1$, then every admissible $\mu^\dagger \in H^{r}(\Omega,\mathbb{R}^d)$ satisfies a variational source condition of the form
	\begin{align*}
		%\beta  \mathcal{D}_J^\xi(\mu, \mu^\dagger) \leq J(\mu) - J(\mu^\dagger) + \phi \left(\mathrm{KL}_\tau^\pi(F(\mu),u^\dagger) \right)
		-\langle\xi,\mu-\mu^\dagger\rangle  \leq \beta_1\mathcal{D}_J^\xi(\mu, \mu^\dagger) + \phi \left(\mathrm{KL}_\tau^\pi(F(\mu),g^\dagger) \right)
    \qquad\text{for all admissible }\mu-\mu^\dagger\in B^r_R
	\end{align*}
	so that \cref{convergence rates Dunker Expectation} can be applied.
\end{theorem}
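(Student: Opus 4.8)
The plan is to exploit that $J=\|\cdot\|_{H^r}^2$ is a squared Hilbert space norm, so $J$ is Fréchet differentiable with $\partial J(\mu^\dagger)=\{\xi\}$, $\langle\xi,h\rangle=2\langle\mu^\dagger,h\rangle_{H^r}$, and a one-line computation gives $\mathcal D_J^\xi(\mu,\mu^\dagger)=\|\mu-\mu^\dagger\|_{H^r}^2$. Writing $h=\mu-\mu^\dagger$, the claimed variational source condition reduces to
\[
	-2\langle\mu^\dagger,h\rangle_{H^r}\ \le\ \beta_1\|h\|_{H^r}^2+\phi\!\left(\mathrm{KL}_\tau^\pi(F(\mu^\dagger+h),g^\dagger)\right)\qquad\text{for all }h\in B_R^r,
\]
and the whole task is to exhibit a concave index function $\phi$ for which this holds. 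I would treat the linearisation $A\defeq F'(\mu^\dagger)$ as a bounded linear operator $H^r(\Omega;\R^d)\to L^2(\Omega\times\Omega)^M$ and reduce the right-hand side entirely to the behaviour of $\|Ah\|_{L^2}$.

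Two analytic reductions feed into this. First, the tangential cone condition \eqref{eqn:TangentialConeCondition} gives, after a triangle inequality and after absorbing $c_{\mathrm{tc}}\|h\|_{L^\infty}\le c_{\mathrm{tc}}C_{\mathrm{emb}}\|h\|_{H^r}<1$ (possible after shrinking $R$, using $H^r\hookrightarrow L^\infty$ for $r>\tfrac d2+1$), the two-sided equivalence $\|F(\mu^\dagger+h)-F(\mu^\dagger)\|_{L^2}\sim\|Ah\|_{L^2}$ uniformly on $B_R^r$. Second, since all shifted densities $y^\pi+\tau$ occurring here are bounded below by $\tau$ and above uniformly on $\mathcal B$ (by the uniform $C^0$ bound on $F(\mathcal B)$), a Pinsker-type pointwise estimate $u\ln(u/z)+z-u\ge(z-u)^2/(2\max(z,u))$ together with the product structure of $y^\pi$ yields $\mathrm{KL}_\tau^\pi(F(\mu),g^\dagger)\gtrsim\|F(\mu)-g^\dagger\|_{L^2}^2$. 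Chaining the two gives $\|Ah\|_{L^2}^2\lesssim\mathrm{KL}_\tau^\pi(F(\mu^\dagger+h),g^\dagger)$, so it suffices to bound $-2\langle\mu^\dagger,h\rangle_{H^r}$ by $\beta_1\|h\|_{H^r}^2$ plus a concave increasing function of $\|Ah\|_{L^2}^2$.

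The heart of the argument is a spectral, approximate-source-condition construction. As $H^r\hookrightarrow L^\infty$ is compact and $F'(\mu^\dagger):L^\infty\to L^2$ is bounded, $A$ is compact, so $A^*A$ (formed with the $H^r$ and $L^2$ inner products) is compact, self-adjoint and nonnegative; let $(\lambda_k,\varphi_k)_k$ be its eigensystem with $\{\varphi_k\}$ an orthonormal basis of $(\ker A)^\perp$. Expanding $h=\sum_k h_k\varphi_k$ and $\mu^\dagger=\sum_k m_k\varphi_k$ one has $\|h\|_{H^r}^2=\sum_k h_k^2$ and $\|Ah\|_{L^2}^2=\sum_k\lambda_k h_k^2$. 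Splitting the frequencies at a level $\epsilon>0$, estimating the low modes $(\lambda_k<\epsilon)$ by Young's inequality against the Bregman term and the high modes $(\lambda_k\ge\epsilon)$ by Cauchy--Schwarz against $\|Ah\|_{L^2}$, gives
\[
	-2\langle\mu^\dagger,h\rangle_{H^r}\ \le\ \beta_1\|h\|_{H^r}^2+\tfrac1{\beta_1}\!\!\sum_{\lambda_k<\epsilon}\!\! m_k^2+\tfrac{2\|\mu^\dagger\|_{H^r}}{\sqrt\epsilon}\,\|Ah\|_{L^2}.
\]
Optimising over $\epsilon$ and substituting $\|Ah\|_{L^2}\lesssim\sqrt{\mathrm{KL}_\tau^\pi(F(\mu),g^\dagger)}$ then defines
\[
	\phi(t)\defeq\inf_{\epsilon>0}\Big[\tfrac1{\beta_1}\!\!\sum_{\lambda_k<\epsilon}\!\! m_k^2+\tfrac{2C\|\mu^\dagger\|_{H^r}}{\sqrt\epsilon}\sqrt t\Big],
\]
which, being an infimum of functions $t\mapsto a_\epsilon+b_\epsilon\sqrt t$, is concave and increasing and satisfies the required inequality.

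The main obstacle — and the point deciding whether $\phi$ is a genuine index function — is that $\phi(0)=\inf_\epsilon\tfrac1{\beta_1}\sum_{\lambda_k<\epsilon}m_k^2$ vanishes only if the tail $\sum_{\lambda_k<\epsilon}m_k^2\to0$ as $\epsilon\to0$, i.e.\ only if $\mu^\dagger$ has no component in $\ker A$. Indeed, if $Ah_0=0$ with $\langle\mu^\dagger,h_0\rangle_{H^r}\ne0$, then testing the target inequality with $h=-s\,h_0$ and $s\downarrow0$ makes its left-hand side linear and its right-hand side quadratic in $s$, so the condition fails. Hence the construction requires injectivity of $A=F'(\mu^\dagger)$ on $H^r$, which I would obtain from (global) injectivity of $F$ together with the tangential cone condition itself: for $F'(\mu^\dagger)[h_0]=0$ the condition forces $F(\mu^\dagger+s\,h_0)=F(\mu^\dagger)$ for small $s$, whence $h_0=0$. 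The remaining ingredients — the precise Pinsker constant from the uniform $C^0$ bound, and the product-structure estimate relating $\|y^\pi-u^\pi\|_{L^2(\Omega^{M+1})}$ to $\|y-u\|_{L^2(\Omega\times\Omega)^M}$ — are routine given the uniform bounds, and I would isolate the resulting Kullback--Leibler lower bound as a separate lemma.
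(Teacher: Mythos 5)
Your proposal is correct in substance and arrives at the same mathematical content as the paper, but by a genuinely different (self-contained) route. The paper's proof is citation-based: it invokes \cite[Prop.\,11]{Dunker-Hohage}, which combines the Flemming--Hofmann result (general source condition plus tangential cone condition implies a variational source condition in terms of $\|F(\mu)-F(\mu^\dagger)\|$) with the Hofmann--Math\'e theorem (every element of a Hilbert space satisfies a general source condition $\mu^\dagger=\Theta(K)w$ for $K=F'(\mu^\dagger)^\#F'(\mu^\dagger)$ compact, self-adjoint, nonnegative and \emph{injective}), and then replaces the $L^2$-norm by $\mathrm{KL}_\tau^\pi$ via \cref{thm:KLdivergenceEstimate}. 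Your spectral splitting at level $\epsilon$ --- Young's inequality on the modes with $\lambda_k<\epsilon$ against the Bregman term, Cauchy--Schwarz on the modes with $\lambda_k\geq\epsilon$ against $\|Ah\|$, then $\phi(t)=\inf_\epsilon[\cdot]$ --- is precisely the argument underlying those two cited theorems, unpacked; it has the advantage of making $\phi$ explicit as a spectral-tail functional and of exposing exactly where injectivity of $F'(\mu^\dagger)$ enters (namely $\phi(0)=0$ forces $\mu^\dagger\perp\ker F'(\mu^\dagger)$), a point the paper only remarks on informally. Your Kullback--Leibler lower bound is exactly the paper's \cref{thm:KLdivergenceEstimate} (Pinsker-type bound from \cite{Borwein} combined with the marginalization/Jensen argument exploiting the Markov-kernel structure of $y^\pi$), and your uniform $L^\infty$ bounds come from the same $H^s$-boundedness of $F(\mathcal B)$ and Sobolev embedding. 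The one shared soft spot is injectivity: the paper asserts $\ker F'(\mu^\dagger)=\{0\}$ with a heuristic, while you reduce it to global injectivity of $F$ via the tangential cone condition (a clean reduction, but injectivity of $F$ is likewise not proved here); neither treatment is fully rigorous on this point, so your proposal is on par with the paper. Two minor remarks: you only need the one-sided bound $\|Ah\|\lesssim\|F(\mu^\dagger+h)-F(\mu^\dagger)\|$ from the tangential cone condition, so no shrinking of $R$ is required; and your $\phi$ is concave and nondecreasing with $\phi(0)=0$ but may be bounded (hence not strictly increasing), which is harmless for \cref{convergence rates Hohage} but worth a sentence if one insists on a strict index function.
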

Note that here we only obtain the variational source condition in a ball around $\mu^\dagger$,
so to apply the convergence rate results from the previous section we need to choose $\mathcal B$ as that ball (see also remark \ref{rem:localConditions}).
The proof of \cref{thm:tangentialConeImpliesVariationalSource} is identical to that of \cite[Prop.\,11]{Dunker-Hohage} up to minor modifications due to our slightly different setting:
For us, the functional $\mathrm{KL}_\tau^\pi$ plays the role of $\mathrm{KL}_\tau$ in the original proof, in which it was used to bound the $L^2$-norm as follows.
\begin{lemma}[Estimate for Kullback--Leibler divergence, \protect{\cite[Thm.\,2.3]{Borwein}}] \label{L^2--KL relationship}
	For all nonnegative functions $x, y \in L^\infty(D)$ with $x-y \in L^2(D)$ and $y>0$ a.e.\ it holds
	\begin{align*}
		\| x-y \|_{L^2(D)}^2 \leq 2 \max(\|x\|_{L^\infty(D)}, \|y\|_{L^\infty(D)}) \mathrm{KL}(x,y).
	\end{align*}
\end{lemma}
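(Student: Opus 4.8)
The plan is to reduce the claim to a pointwise inequality and then integrate. Every quantity involved is the integral over $D$ of a function of the two pointwise values $x(t),y(t)$, so it suffices to prove
\begin{equation*}
(a-b)^2 \le 2\max(a,b)\,g(a,b),\qquad g(a,b)\defeq b\ln\tfrac ba + a - b,
\end{equation*}
for every $a\ge 0$, $b>0$; note that $g(x(t),y(t))$ is exactly the integrand of $\mathrm{KL}(x,y)$ in the paper's convention $\mathrm{KL}(z,u)=\int u\ln\frac uz + z - u\,\d x$. Granting this pointwise bound, and using $g\ge 0$ together with $\max(x(t),y(t))\le\max(\|x\|_{L^\infty(D)},\|y\|_{L^\infty(D)})\eqdef M$ for a.e.\ $t$, I would integrate to obtain
\begin{equation*}
\|x-y\|_{L^2(D)}^2 = \int_D (x-y)^2 \le \int_D 2\max(x,y)\,g(x,y) \le 2M\int_D g(x,y) = 2M\,\mathrm{KL}(x,y).
\end{equation*}

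For the pointwise inequality I would recognize $g(a,b)$ as the Bregman divergence $D_\phi(b,a)=\phi(b)-\phi(a)-\phi'(a)(b-a)$ of the convex function $\phi(s)=s\ln s$, whose second derivative is $\phi''(s)=1/s$. Taylor's formula with integral remainder then gives $g(a,b)=\int_a^b \frac{b-s}{s}\,\d s$, an integral of a nonnegative integrand over the interval between $a$ and $b$. On that interval one has $s\le\max(a,b)$, hence $1/s\ge 1/\max(a,b)$; pulling this constant out and evaluating the remaining integral of a linear function yields $g(a,b)\ge \frac{(a-b)^2}{2\max(a,b)}$, which is precisely the desired bound. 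This is immediate in both cases $a<b$ and $a>b$ (and trivial for $a=b$).

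Finally I would dispose of the degenerate configurations. If $x=0$ on a set of positive measure while $y>0$ a.e.\ (as assumed), then $g(x,y)=+\infty$ there, so $\mathrm{KL}(x,y)=+\infty$ and the inequality is vacuous; otherwise $x>0$ a.e.\ and the pointwise estimate applies almost everywhere. Finiteness of $M$ (from $x,y\in L^\infty(D)$) and of the left-hand side (from $x-y\in L^2(D)$) keep the displayed chain meaningful. The argument is an elementary one-dimensional convexity estimate, so I do not anticipate a genuine obstacle; the only points needing mild care are using the \emph{local} constant $\max(a,b)$ rather than the global $M$ inside the pointwise step (which keeps the constant sharp, and matches the $\max$ appearing in the statement) and handling the loci where a density vanishes.
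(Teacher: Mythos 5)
Your proof is correct. Note, however, that the paper does not prove this lemma at all: it is imported verbatim as \cite[Thm.\,2.3]{Borwein}, so there is no in-paper argument to compare against. What you supply is a self-contained elementary derivation, and it checks out: with the paper's convention $\mathrm{KL}(z,u)=\int u\ln\frac uz+z-u\,\d x$ your pointwise integrand $g(a,b)=b\ln\frac ba+a-b$ is indeed the Bregman divergence of $\phi(s)=s\ln s$, the Taylor remainder identity gives
\begin{equation*}
g(a,b)=\int_a^b\frac{b-s}{s}\,\d s=\int_{\min(a,b)}^{\max(a,b)}\frac{|b-s|}{s}\,\d s\geq\frac{1}{\max(a,b)}\cdot\frac{(a-b)^2}{2},
\end{equation*}
and integrating the resulting pointwise bound (using $g\geq0$ to replace the local $\max(a,b)$ by the global $\max(\|x\|_{L^\infty},\|y\|_{L^\infty})$) yields the claim. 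Your handling of the degenerate loci is also right: where $x=0$ and $y>0$ on a set of positive measure the right-hand side is $+\infty$ and the inequality is vacuous. The only cosmetic imprecision is the phrase ``nonnegative integrand over the interval between $a$ and $b$'' --- for $b<a$ the integrand of the oriented integral is nonpositive and the orientation flips the sign --- but the rewritten unoriented form above makes this harmless. In short: a correct, more informative replacement for a result the paper merely cites.
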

Therefore we have to replace this estimate in our setting by the following one (which is a consequence of lemma \ref{L^2--KL relationship}).
\begin{lemma}[Estimate for Kullback--Leibler divergence]\label{thm:KLdivergenceEstimate}
	Let $z=F(\mu_z), u=F(\mu_u)  \in\mathcal{MZ}(\Omega)^M$ be in the range of operator $F$ and assume \eqref{eqn:divergenceFreeAssumption}, then the following holds:
	\begin{equation*}
		\| z - u  \|_{{L^2(\Omega \times \Omega)}^M}^2
		\leq\tfrac{2M|\Omega|^{M-1}}{\inf_{x\in\Omega}g^0(x)^2}\max(\|u^\pi+\tau\|_{L^\infty(\Omega^{M+1})}, \|z^\pi+\tau\|_{L^\infty(\Omega^{M+1})})\mathrm{KL}_\tau^\pi(z,u).
	\end{equation*}
\end{lemma}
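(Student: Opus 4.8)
The plan is to combine two ingredients: the pointwise estimate of \cref{L^2--KL relationship} applied to the shifted densities, together with an elementary ``product'' inequality that transfers control of $z^\pi-u^\pi$ on $\Omega^{M+1}$ back to control of the individual kernels $z_i-u_i$ on $\Omega\times\Omega$. Concretely, I would first apply \cref{L^2--KL relationship} on $D=\Omega^{M+1}$ to the pair $x=z^\pi+\tau$, $y=u^\pi+\tau$: both are nonnegative, bounded, bounded below by $\tau>0$, and their difference $z^\pi-u^\pi$ lies in $L^2$, so the lemma yields $\|z^\pi-u^\pi\|_{L^2(\Omega^{M+1})}^2\le 2\max(\|z^\pi+\tau\|_{L^\infty},\|u^\pi+\tau\|_{L^\infty})\,\mathrm{KL}(z^\pi+\tau,u^\pi+\tau)=2\max(\,\cdots)\,\mathrm{KL}^\pi_\tau(z,u)$. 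It therefore remains to prove the purely geometric bound $\|z-u\|_{L^2(\Omega\times\Omega)^M}^2\le \tfrac{M|\Omega|^{M-1}}{(\inf_\Omega g^0)^2}\,\|z^\pi-u^\pi\|_{L^2(\Omega^{M+1})}^2$, after which the claim follows by chaining the two estimates.

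For this geometric bound I would first pull out the initial density: since $z^\pi-u^\pi=g^0\,(\prod_i z_i-\prod_i u_i)$ and $g^0(x^0)^2\ge (\inf_\Omega g^0)^2$, it suffices to show $\sum_{k=1}^M\|z_k-u_k\|_{L^2(\Omega\times\Omega)}^2\le M|\Omega|^{M-1}\int_{\Omega^{M+1}}(\prod_i z_i-\prod_i u_i)^2\,\d x$, the factor $M$ arising from the $M$ summands. To isolate the $k$-th kernel I integrate out all coordinates other than $x^{k-1},x^k$: the ``future'' factors $z_i,u_i$ with $i>k$ are Markov kernels that integrate to $1$ in their first argument, which collapses the tail of the product, while integrating out the remaining coordinates by Cauchy--Schwarz produces the powers of $|\Omega|$. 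The essential structural input is the normalization $\int_\Omega z_k\,\d x^k=\int_\Omega u_k\,\d x^k=1$, i.e.\ $\int_\Omega(z_k-u_k)\,\d x^k=0$: it is exactly this zero-mean property that prevents the discrepancy $z_k-u_k$ from being annihilated when the surrounding factors are integrated against the constant $1$.

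I expect the main obstacle to be precisely this last point, namely ruling out cancellation in the difference of products: unlike a difference of sums, $\prod_i z_i-\prod_i u_i$ can be small even when some $z_k-u_k$ is not, because the remaining factors may mask the deviation, and controlling this is where the normalization of the kernels (together with the uniform bounds of the setting and the lower bound $\inf_\Omega g^0>0$) must be used carefully. I would organize the estimate as an induction on $M$, peeling off the last coordinate: writing $A=\prod_{i<M}z_i$ and $B=\prod_{i<M}u_i$, Cauchy--Schwarz in $x^M$ splits $\int_\Omega(A z_M-B u_M)^2\,\d x^M$ into a term equal to $|\Omega|^{-1}(A-B)^2$ (using $\int z_M\,\d x^M=\int u_M\,\d x^M=1$) that feeds the induction hypothesis for the first $M-1$ kernels, plus a nonnegative remainder accounting for the $M$-th kernel. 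Closing the induction reduces to showing this remainder dominates $|\Omega|^{-(M-1)}\|z_M-u_M\|^2$ — the delicate core of the argument — and tracking the constants $\tfrac{1}{|\Omega|}I_{M-1}+(\text{remainder})$ through the recursion then delivers the claimed factor $M|\Omega|^{M-1}$.
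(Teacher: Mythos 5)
Your first step and your reduction target both match the paper: apply lemma~\ref{L^2--KL relationship} on $D=\Omega^{M+1}$ to the pair $z^\pi+\tau$, $u^\pi+\tau$, and then reduce everything to the purely geometric bound $\|z-u\|_{L^2(\Omega\times\Omega)^M}^2\le M|\Omega|^{M-1}(\inf_\Omega g^0)^{-2}\|z^\pi-u^\pi\|_{L^2(\Omega^{M+1})}^2$. The gap is in your proof of this second inequality. You treat possible cancellation in $\prod_i z_i-\prod_i u_i$ as a genuine obstacle and propose an induction on $M$ whose closing step --- that the variance remainder $\int_\Omega(Az_M-Bu_M)^2\,\d x^M-|\Omega|^{-1}(A-B)^2$, integrated over the remaining variables, dominates $|\Omega|^{-(M-1)}\|z_M-u_M\|_{L^2(\Omega\times\Omega)}^2$ --- you explicitly leave unproven. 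That step is where the argument breaks: writing $Az_M-Bu_M-(A-B)/|\Omega|=A(z_M-u_M)+(A-B)(u_M-1/|\Omega|)$, the square produces a cross term $2A(A-B)(z_M-u_M)(u_M-1/|\Omega|)$ which can be negative and is not controlled by the quantities available from the induction hypothesis, so the remainder need not dominate $\|z_M-u_M\|^2$ with a uniform constant; at best this requires substantial extra work and would not obviously yield the stated constant $M|\Omega|^{M-1}$.

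The paper sidesteps all of this with an exact identity that makes the cancellation question moot. Since the kernels in $\mathcal{MZ}(\Omega)$ integrate to one in \emph{both} arguments, integrating $z^\pi/g^0=\prod_j z_j$ over all coordinates except $x^{i-1},x^i$ collapses the future factors (first-argument normalization, integrating $x^M$ down to $x^{i+1}$) and the past factors (second-argument normalization, integrating $x^0$ up to $x^{i-2}$) and returns $z_i(x^i,x^{i-1})$ \emph{exactly}. Hence $z_i-u_i$ is literally the marginal of $(z^\pi-u^\pi)/g^0$ --- a linear operation, so no cancellation can hide a discrepancy --- and Jensen's inequality gives $\|z_i-u_i\|_{L^2(\Omega\times\Omega)}^2\le|\Omega|^{M-1}\|z^\pi-u^\pi\|_{L^2(\Omega^{M+1})}^2/\inf_{x\in\Omega}g^0(x)^2$ in one line; summing over $i$ produces the factor $M$. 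Your instinct that the normalization of the kernels is the essential structural input is correct, but you invoke it only in the first argument; the two-sided normalization is precisely what turns the problem into a marginalization estimate rather than a delicate induction.
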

\begin{proof}%[Proof of lemma\,\ref{thm:KLdivergenceEstimate}]
Let us first assume that $\div \mu_z = \div \mu_u = 0$ (and thus each component of $u$ and $z$ is doubly stochastic, \ie $\int_\Omega z_i(x,\hat x)\,\d x=1=\int_\Omega z_i(x,\hat x)\,\d\hat x$ and analogously for $u$), then it holds
\begin{align}\label{eqn:MarginalDensity}
z_i(x^i,x^{i-1})
&=\int_{\Omega}z_{i-1}(x^{i-1},x^{i-2})\cdots \int_\Omega z_{1}(x^{1},x^0)\,\d x^0\cdots\d x^{i-2} \nonumber \\
&\quad\cdot z_i(x^i,x^{i-1})\cdot
\int_{\Omega}z_{i+1}(x^{i+1},x^i)\cdots \int_\Omega z_{M}(x^{M},x^{M-1})\,\d x^M\cdots\d x^{i+1}  \\
&=\int_{\Omega^{M-1}}z^\pi(x^0,\ldots,x^M)/g^0(x^0)\,\d x^0\cdots\d x^{i-2}\,\d x^{i+1}\cdots\d x^M \nonumber
\end{align}
for all $i=1,\ldots,M$. Thus, with Jensen's inequality we obtain
\begin{align*}
	\| z_i - u_i \|_{L^2(\Omega \times \Omega)}^2
  &= \int_{\Omega^2} \left(\int_{\Omega^{M-1}}\!\!\!\frac{z^\pi(x^0,\ldots,x^M)-u^\pi(x^0,\ldots,x^M)}{g^0(x^0)}\,\d x^0\cdots\d x^{i-2}\,\d x^{i+1}\cdots\d x^M\right)^2 \d x^{i-1} \,\d x^i  \\
  &\leq|\Omega|^{M-1} \int_{\Omega^{M+1}}\!\!\!\frac{|z^\pi(x^0,\ldots,x^M)-u^\pi(x^0,\ldots,x^M)|^2}{g^0(x^0)^2}\,\d x^0\cdots\d x^M \\
	&\leq|\Omega|^{M-1}\| z^\pi-u^\pi \|_{L^2(\Omega^{M+1})}^2/\inf_{x\in\Omega}g^0(x)^2.
\end{align*}
The desired estimate now follows from lemma\,\ref{L^2--KL relationship} and the definition of $\mathrm{KL}_\tau^\pi$. Finally, if instead of divergence-free drifts we assume equally long time intervals, all components of $z$ coincide with the first component $z_1$ (analogously for $u$), for which \eqref{eqn:MarginalDensity} still holds. Thus the result follows for all $i = 1,\ldots,M$.
\end{proof}
This estimate is applied to $g^\dagger$ and $F(\mu)$ so that we have to bound $\|(g^\dagger)^\pi+\tau\|_{L^\infty(\Omega^{M+1})}$ and $\|F(\mu)^\pi+\tau\|_{L^\infty(\Omega^{M+1})}$ by a constant;
as in the previous section this follows from the boundedness of $\mathrm{range}(F)$ on $\mathcal B$ in $H^s(\Omega\times\Omega)^M$ and Sobolev embedding.
The remainder of the proof is identical.
Note that it exploits the fact by Flemming and Hofmann \cite[Thm.\,3.1]{Flemming_Hofmann} that general source conditions imply variational source conditions provided the tangential cone condition (or even a weaker variant) is fulfilled,
together with the following fact by Hofmann and Math\'e stating that in Hilbert spaces, general source conditions can always be fulfilled.
\begin{theorem}[General classical source conditions, \protect{\cite{mathe}}]
	Let $K: X \to X $ be a compact, self adjoint, injective and nonnegative linear operator. Then for every $x \in X$  there is an index function $\Theta$ such that $x = \Theta(K)w$ for some $w \in X$.
\end{theorem}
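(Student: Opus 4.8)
The plan is to diagonalise $K$ via the spectral theorem and then build $\Theta$ by hand, exploiting the square-summability of the coefficients of $x$. Since $K$ is compact, self-adjoint and nonnegative, it admits an orthonormal system of eigenvectors with nonnegative eigenvalues; injectivity forces $\ker K=\{0\}$, so this system is a complete orthonormal basis of $X$ and $0$ is not an eigenvalue. If $\dim X<\infty$ the claim is trivial ($K$ is invertible, take $\Theta=\mathrm{id}$ and $w=K^{-1}x$), so assume $\dim X=\infty$. Grouping eigenvalues by their distinct values I write $K=\sum_k\mu_kP_k$ with $\mu_1>\mu_2>\dots>0$, $\mu_k\to0$, and $P_k$ the finite-rank orthogonal eigenprojections. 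Decomposing $x=\sum_kx^{(k)}$ with $x^{(k)}\defeq P_kx$ gives $\sum_k\|x^{(k)}\|^2=\|x\|^2<\infty$. Functional calculus yields $\Theta(K)=\sum_k\Theta(\mu_k)P_k$, so the task reduces to choosing the values $t_k\defeq\Theta(\mu_k)>0$ such that $w\defeq\sum_kt_k^{-1}x^{(k)}$ lies in $X$, \ie $\sum_k\|x^{(k)}\|^2/t_k^2<\infty$, while $(t_k)$ remains compatible with extending $\Theta$ to a genuine index function.

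The heart of the argument is a divergent-multiplier estimate. Writing $a_k\defeq\|x^{(k)}\|^2$ and $r_k\defeq\sum_{j\ge k}a_j$, one has $r_k\downarrow0$ and $a_k=r_k-r_{k+1}$, so comparing the sum with $\int_0^{r_1}t^{-1/2}\,\d t$ gives
\begin{equation*}
\sum_k\frac{a_k}{\sqrt{r_k}}\le\int_0^{r_1}\frac{\d t}{\sqrt t}=2\sqrt{r_1}<\infty.
\end{equation*}
This is the key point: the tails decay fast enough that $a_k/\sqrt{r_k}$ stays summable even though $\sqrt{r_k}\to0$. I then set $t_k\defeq r_k^{1/4}+2^{-k}$. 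Because $t_k\ge r_k^{1/4}$ we obtain $\sum_ka_k/t_k^2\le\sum_ka_k/\sqrt{r_k}<\infty$, so $w$ is well defined in $X$; and since $r_k^{1/4}$ is nonincreasing while $2^{-k}$ is strictly decreasing, $(t_k)$ is \emph{strictly} decreasing with $t_k\to0$ (the additive $2^{-k}$ is included precisely to force strict monotonicity when some $a_k$, and hence a jump of $r_k$, vanishes).

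It remains to turn the data $(\mu_k,t_k)$ into an index function. Since $\mu_k\downarrow0$ and $t_k\downarrow0$ are both strict, the assignment $\mu_k\mapsto t_k$ together with $0\mapsto0$ is order-preserving; defining $\Theta$ by linear interpolation on each interval $[\mu_{k+1},\mu_k]$, by $\Theta(0)=0$, and extending it strictly increasingly beyond $\mu_1=\|K\|$ yields a continuous, strictly increasing $\Theta\colon[0,\infty)\to[0,\infty)$ with $\Theta(0)=0$ and $\Theta(\mu_k)=t_k$, \ie an index function. Finally $\Theta(K)w=\sum_k\Theta(\mu_k)\,t_k^{-1}x^{(k)}=\sum_kx^{(k)}=x$, as required. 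I expect the only genuinely delicate step to be the divergent-multiplier estimate above, the remainder being the spectral decomposition and a routine interpolation; the secondary subtlety—keeping $\Theta$ strictly monotone across clusters of equal eigenvalues and across vanishing components—is handled by the grouping into distinct $\mu_k$ and the perturbation $2^{-k}$.
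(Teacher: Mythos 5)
Your proof is correct; note that the paper does not prove this statement at all but merely quotes it from the cited reference \cite{mathe}. Your argument---spectral decomposition into distinct eigenvalue clusters, the tail sums $r_k=\sum_{j\geq k}\|P_jx\|^2$, the telescoping bound $\sum_k a_k/\sqrt{r_k}\leq 2\sqrt{r_1}$, the choice $\Theta(\mu_k)=r_k^{1/4}+2^{-k}$ (with the perturbation guaranteeing strict monotonicity when components vanish), and monotone interpolation---is essentially the standard proof from that reference, and all steps, including the handling of the finite-dimensional and degenerate cases, check out.
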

In the proof of \cite[Prop.\,11]{Dunker-Hohage} this statement is applied for the operator $K=F'(\mu^\dagger)^\#F'(\mu^\dagger)$,
where their forward operator $F$ maps a drift $\mu$ onto the resulting equilibrium distribution of particles (the solution of \eqref{eqn:FokkerPlanck} after infinite time);
it seems to us that this actually requires an additional argument since the corresponding $F'(\mu^\dagger)$ is not injective (its kernel should contain all vector fields $h$ with $\div(F(\mu^\dagger)h)=0$).
In our setting, on the other hand, the kernel of $F'(\mu^\dagger)$ is empty (it would contain all infinitesimal drift perturbations $h$ that leave the final time solution of \eqref{eqn:FokkerPlanck} invariant,
\emph{irrespective of the initial condition}).

\notinclude{
\begin{proof}[Proof of \cref{thm:tangentialConeImpliesVariationalSource}]
	By the last theorem there is some index function $\Theta$ such that
\todo[inline]{need to argue that $F'$ is compact from $H^s(\Omega^{M+1})$ into itself}
	\begin{align*}
		\mu^\dagger = \Theta(F'(\mu^\dagger)^*F'(\mu^\dagger)) w
	\end{align*}
\todo[inline]{here $(F')^*$ is used, before it was $(F')^\#$?}
	for some $w \in H^{s}(\Omega,\mathbb{R}^d)$. This implies that for some other index function $\lambda$, $\mu^\dagger$ also satisfies a variational source condition of the type 
	\begin{align*}
		\beta  \mathcal{D}_J^\xi(\mu, \mu^\dagger) \leq J(\mu) - J(\mu^\dagger) + \lambda \left(\| F'(\mu)[\mu - \mu^\dagger]\|_{L^2(\Omega \times \Omega)^{M}}^2  \right).
	\end{align*}
\todo[inline]{why? by Flemming's result}
\todo[inline]{rewrite as in \eqref{eqn:VariationalSourceCondition}?}
	Now \eqref{eqn:TangentialConeCondition} implies
	\begin{equation*}
		\| F'(\mu)[\mu-\mu^\dagger] \|_{L^2(\Omega \times \Omega)^M}
		\lesssim \| \mu-\mu^\dagger \|_{L^\infty(\Omega)} \| F(\mu^\dagger) - F(\mu) \|_{L^2(\Omega \times \Omega)^M} + \| F(\mu^\dagger) - F(\mu) \|_{L^2(\Omega \times \Omega)^M}.
	\end{equation*}
	Thus for all $\mu \in H^{s}(\Omega,\mathbb{R}^d)$ with $\| \mu-\mu^\dagger \|_{L^\infty(\Omega)} \leq 1$, the variational source condition becomes
	\begin{align*}
		\beta  \mathcal{D}_J^\xi(\mu, \mu^\dagger) \leq J(\mu) - J(\mu^\dagger) + \Tilde{\lambda} \left( \| F(\mu^\dagger) - F(\mu) \|_{L^2(\Omega \times \Omega)^M}^2 \right),
	\end{align*}
	and with the monotonicity of $\lambda$ and the relation between the Kullback--Leibler divergence and the $L^2$-norm
	\begin{align*}
		\beta  \mathcal{D}_J^\xi(\mu, \mu^\dagger) \leq J(\mu) - J(\mu^\dagger) + \Tilde{\Tilde{\lambda}} \left(\mathrm{KL}_\tau^\pi (F(\mu),g^\dagger) \right).
	\end{align*}
	Here we used that the domain of $F$ is bounded in $L^\infty(\Omega)^d$ and hence also its range. Thus we can bound the term  $\max (\|u^\dagger\|_{L^\infty}, \|F(\mu)\|_{L^\infty})$ by a constant. By choosing another index function $\phi$ we get
	\todo[inline]{write somewhere the assumption that the domain we are looking at is bounded, i.e $\| h\|_{L^\infty} < C$ and why is the range of $P$ then bounded in the $\infty$-norm?}
	\begin{align*}
		\beta  \mathcal{D}_J^\xi(\mu, \mu^\dagger) \leq J(\mu) - J(\mu^\dagger) + \phi \left(\mathrm{KL}_\tau^\pi(F(\mu),g^\dagger) \right).
	\end{align*}
	Here we used that the range of $F$ is bounded in $H^s(\Omega)$ by the assumption on $s$ and this space embeds continuously in $L^\infty(\Omega)$, thus we can bound the term  $\max (\|u^\dagger\|_{L^\infty(\Omega)}, \|F(\mu)\|_{L^\infty})$ by a constant.
  This is exactly the desired statement for $\phi=\tilde{\tilde\lambda}$ and $\beta_1=1-\beta$.
\todo[inline]{why not directly use $\phi$?}
\todo[inline]{where is it used that the source condition only needs to hold on a ball?}
\end{proof}
% Hence we can apply the theory to this particular problem.
}%\notinclude

\section{A weaker tangential cone condition}\label{sec:tangentialCone}

In this section we will prove that in our parameter identification setting a weaker version of \eqref{eqn:TangentialConeCondition} holds for all sufficiently regular $\mu^\dagger$, \ie for all $\mu^\dagger$ in the space
\begin{equation*}
H^r_{\mathrm N}=\{\mu\in H^r(\Omega;\R^d)\,|\,\mu\cdot\nu=0\text{ on }\partial\Omega\}
\end{equation*}
of drifts with vanishing normal component on the boundary, where $r>1+\frac d2$.
In more detail, we extend forward operator $F$ to a time-dependent operator $P$ with $F(\mu)_i=P(\mu)(t_i-t_{i-1})$ and then prove
\begin{equation} \label{eqn:weakTangentialConeCondition}
	\left\| P(\mu^\dagger\!\!+\!h) \!-\! P(\mu^\dagger) \!-\! P'(\mu^\dagger)[h] \right\|_{L_\alpha^\infty L^2(\Omega \times \Omega)} \lesssim \| h \|_{L^\infty(\Omega;\R^d)} \| P(\mu^\dagger\!\!+\!h) \!-\! P(\mu^\dagger) \|_{L_\alpha^\infty L^2(\Omega \times \Omega)}
  \ \ \text{for all }h\in H^r_{\mathrm N}\!
\end{equation}
with some $\alpha>0$ (and the involved constant depending only on $\|\mu^\dagger\|_{H^r}$),
where for a function space $X$ we abbreviate
\begin{equation*}
L_\alpha^\infty X=L_\alpha^\infty((0,T);X)=\{f:(0,T)\to X\,|\,\|f\|_{L_\alpha^\infty X}<\infty\}
\qquad\text{with}\quad
\|f\|_{L_\alpha^\infty X}=\esssup_{t\in(0,T)}t^\alpha\|f(t)\|_X.
\end{equation*}
It will be future work to sharpen this result and actually obtain the validity of \eqref{eqn:TangentialConeCondition}, which we conjecture to hold.
We will first prove continuity of the operator $P$ (which implies continuity of $F$) and then the above tangential cone condition, which also implies differentiability of $P$ (implying in turn differentiability of $F$).

Let us first introduce the operator $P$.
For given drift $\mu\in C^1(\overline\Omega;\R^d)$ with $\mu\cdot\nu=0$ on $\partial\Omega$, let us define the elliptic differential operator and associated bilinear form
\begin{align*}
L_\mu&:H^1(\Omega)\to H^1(\Omega)^*,&
L_\mu u&=-\Delta(\tfrac{\sigma^2}2u)+\div(\mu u),\\
B_\mu&:H^1(\Omega)^2\to\R,&
B_\mu(u,v)&=\int_\Omega -\mu u \cdot \nabla v + \tfrac{\sigma^2}{2} \nabla u \cdot \nabla v \,\d x.
\end{align*}
Next define $P$ to be the nonlinear operator that maps a drift $\mu\in C^1(\Omega)$ to the function $(t,x,x^0)\mapsto g_\mu(t,x;0,x^0)$ with $u=g_\mu(\cdot,\cdot;0,x^0)$ being the solution of the parabolic partial differential equation (PDE)
\begin{align*}
\partial_tu+L_\mu u&=0
&&\text{in }[0,T]\times\Omega,\\
(\tfrac{\sigma^2}2\nabla u-\mu u)\cdot\nu&=0
&&\text{on }[0,T]\times\partial\Omega,\\
u&=\delta_{x^{0}}
&&\text{at }t=0,
\end{align*}
which in weak form can be expressed as
\begin{align*}
\langle\partial_tu,v\rangle+B_\mu(u,v)&=0
&&\text{for all }t\in(0,T],v\in H^1(\Omega),\\
\lim_{t\searrow0}\langle u,v\rangle&=v(x^0)
&&\text{for all }v\in C^0(\overline\Omega).
\end{align*}
In other words, $P(\mu)$ is the Green function associated with the PDE (due to the PDE's time-invariance it just depends on a single time variable instead of two).
Recall from \cref{sec:forwardOp} that the $i$th component of the forward operator is given by
\begin{equation*}
F(\mu)_i(x^i,x^{i-1})=P(\mu)(t_i-t_{i-1},x^i,x^{i-1}).
\end{equation*}
As a side remark, the requirement $\mu\in C^1(\Omega;\R^d)$ leads to the classical setting for second order parabolic equations,
in which all coefficients of $L_\mu u=-\frac{\sigma^2}2\Delta u+\mu\cdot\nabla u+\div(\mu)u$ are uniformly bounded.
The operator $P$ satisfies the following bounds.

\begin{lemma}[Estimate on fundamental solution, \protect{\cite[Ch.\,1, Sec.\,8]{Friedman}}]\label{thm:EstimatesFundamentalSolution}
For given dimension $d$, $r>1+\frac d2$, and diffusion coefficient $\sigma^2$ there exist a constant $C>0$ and a continuous function $\hat C:[0,\infty)\to(0,\infty)$ such that
	\begin{align*}
		|P(\mu)(t,x,x^0)|
		&\leq\hat C(\|\mu\|_{H^r(\Omega;\R^d)})t^{-\frac{d}2}\ \ \ \exp\big(-C\tfrac{|x-x^0|^2}{t}\big)
		\eqdef G_{0}(t,x-x^0),\\
		|\partial_{x^0} P(\mu)(t,x,x^0)|
		&\leq\hat C(\|\mu\|_{H^r(\Omega;\R^d)})t^{-\frac{d+1}2}\exp\big(-C\tfrac{|x-x^0|^2}{t}\big)
		\eqdef G_{1}(t,x-x^0).
		\end{align*}
\end{lemma}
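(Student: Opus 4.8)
The plan is to reduce the claimed bounds to the classical Gaussian estimates for fundamental solutions of uniformly parabolic operators with Hölder-continuous coefficients, which is precisely the content of \cite[Ch.\,1, Sec.\,8]{Friedman}, and then to transfer these whole-space estimates to the bounded domain $\Omega$ with reflecting boundary conditions. Writing the operator in nondivergence form as $L_\mu u=-\frac{\sigma^2}2\Delta u+\mu\cdot\nabla u+(\div\mu)u$, the principal part is the constant-coefficient operator $-\frac{\sigma^2}2\Delta$, which is uniformly elliptic with ellipticity constant $\sigma^2/2$ independent of $\mu$; thus uniform parabolicity is automatic, and the only $\mu$-dependence enters through the lower-order coefficients $\mu$ and $\div\mu$.

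The first step is to convert the $H^r$-bound on $\mu$ into pointwise Hölder bounds on the coefficients. Since $\Omega$ is a bounded smooth domain and $r>1+\frac d2$, the Sobolev--Morrey embedding gives $H^r(\Omega;\R^d)\hookrightarrow C^{1,\beta}(\overline\Omega;\R^d)$ for some $\beta\in(0,1)$, with $\|\mu\|_{C^{1,\beta}}\le c\,\|\mu\|_{H^r}$. Hence $\mu$ is $C^{1,\beta}$ and $\div\mu$ is $C^{0,\beta}$, with norms controlled by $\|\mu\|_{H^r}$; this is exactly the regularity required by the parametrix method, and it is the reason for the threshold $r>1+\frac d2$. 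The zeroth-order coefficient $\div\mu$ must be Hölder continuous, which forces $\mu\in C^{1,\beta}$, and this extra derivative is also what is needed for the estimate on $\partial_{x^0}P(\mu)$.

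With Hölder coefficients in hand, I would extend $\mu$ to a compactly supported field on $\R^d$ by a bounded extension operator, preserving the $C^{1,\beta}$-norm up to a constant, and invoke Friedman's parametrix construction for the Cauchy problem. This yields a fundamental solution $\Gamma(t,x,x^0)$ satisfying $|\Gamma(t,x,x^0)|\le C_1 t^{-d/2}\exp(-c|x-x^0|^2/t)$ and $|\partial_{x^0}\Gamma(t,x,x^0)|\le C_1 t^{-(d+1)/2}\exp(-c|x-x^0|^2/t)$, where $C_1$ and $c$ depend only on the ellipticity constant, the dimension, and the Hölder norms of the coefficients. Tracking this dependence through the parametrix estimates---each term of the Levi series is majorized by a Gaussian whose constants depend continuously and monotonically on the coefficient norms---produces a continuous function $\hat C(\cdot)$ of $\|\mu\|_{H^r}$, as claimed. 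The $\partial_{x^0}$-estimate follows from the corresponding gradient estimate for the parametrix, which is where the additional $C^{1,\beta}$ regularity is consumed.

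The main obstacle is the passage from the whole-space fundamental solution to the Green function $P(\mu)$ on the bounded domain with conservative (reflecting) Neumann boundary conditions, since near $\partial\Omega$ the reflecting kernel genuinely differs from the free one---reflection of mass at the boundary can only increase the kernel. I would control this by a localization-and-reflection argument: away from the boundary the interior estimate coincides with the whole-space one, while in a boundary collar one flattens $\partial\Omega$ by a smooth change of coordinates (absorbing the resulting variable principal part, still uniformly elliptic with $C^{0,\beta}$ coefficients, into the parametrix framework) and uses even reflection across the flattened boundary, which at most doubles the Gaussian majorant. Alternatively one may invoke the parabolic maximum principle to dominate $P(\mu)$ by the whole-space fundamental solution of a suitably modified operator. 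Either route preserves the form of the bounds at the cost of enlarging $\hat C$ and shrinking $C$, yielding the stated estimates uniformly for $t\in(0,T]$ and all $\mu$ with $\|\mu\|_{H^r}$ bounded.
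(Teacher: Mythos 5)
Your proposal shares the two load-bearing ingredients of the paper's justification --- the Sobolev--Morrey embedding $H^r(\Omega;\R^d)\hookrightarrow C^{1,\beta}(\overline\Omega;\R^d)$ for $r>1+\frac d2$, which converts the $H^r$-bound into the H\"older bounds on $\mu$ and $\div\mu$ that classical parabolic theory needs, and the parametrix-based Gaussian estimates of Friedman --- but it assembles them along a genuinely different route. The paper does not pass through the whole-space fundamental solution at all: it cites boundary Green's function estimates directly (Ladyzenskaja, Ch.\,4, Thm.\,16.3, and Kalashnikov), which bound $\partial_x P(\mu)$, and then obtains the $\partial_{x^0}$-bound from the duality $g_\mu(t,x;\tau,x^0)=g_\mu^*(\tau,x^0;t,x)$ with the adjoint equation; it explicitly concedes that the cited estimates are stated only for Dirichlet boundary conditions. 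You instead read the $\partial_{x^0}$-estimate straight off the parametrix on $\R^d$ (legitimate, and at bottom the same adjoint mechanism) and transfer to $\Omega$ by boundary flattening and even reflection. Your route buys a self-contained treatment of the Neumann condition rather than a citation that does not quite cover it; its cost is that the reflection step is the thinnest part of the argument.

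Concretely, even reflection reproduces the Neumann kernel exactly only for a reflection-symmetric operator on a half-space. After flattening, the transformed operator is not symmetric under the reflection, and the evenly reflected coefficients are in general only Lipschitz (not $C^{1,\beta}$) across the interface, so one cannot simply re-run the parametrix for the reflected problem; the claim that reflection ``at most doubles the Gaussian majorant'' is a heuristic for the model case, not an estimate. To make this step carry weight you would need to exploit $\mu\cdot\nu=0$ on $\partial\Omega$ (so that the appropriate odd/even reflection of the normal and tangential drift components stays H\"older) or construct the boundary correction by a layer-potential argument as in the cited references. The maximum-principle alternative has the same issue: with reflecting boundary conditions $P(\mu)$ can exceed the free kernel near $\partial\Omega$, so the comparison only works against a modified operator with an enlarged constant, as you half-acknowledge. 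In fairness, the paper's own remark leaves an analogous gap (Dirichlet versus Neumann), so your proposal sits at a comparable level of rigor while being more explicit about where the difficulty lies.
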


%\begin{lemma}[Boundedness of Green's function]
%Let $r>\frac d2+1$.
%There exist constants $C,\hat C>0$ only depending on $\Omega$ and $\|\mu\|_{H^r(\Omega;\R^d)}$ such that for all $a,b,c\in\N$ with $2a+b+c<2r$ it holds
%\begin{equation*}
%|\partial_t^a\partial_x^b\partial_{x^0}^cP(\mu)(t,x,x^0)|
%\leq\hat Ct^{-\frac{2a+b+c+d}2}\exp\big(-C\tfrac{|x-x^0|^2}{t}\big)
%\eqdef G_{2a+b+c}(t,x-x^0).
%\end{equation*}
%\todo[inline]{I am guessing the condition on a,b,c. I will focus on the case $c = 0, 2a + b \leq 2$, which corresponds to the coefficients being in $C^{0,\alpha}$ only. Should we state the lemma as a claim about general parabolic PDEs??}
%\end{lemma}

\begin{remark}[Estimates and regularity of Green's function]
For the fundamental solution (instead of Green's function $P(\mu)$) the previous estimate can be found in \cite[Ch.\,1, Sec.\,8]{Friedman}.
For Green's functions $P(\mu)$ we find estimates on $\partial_xP(\mu)$ (instead of $\partial_{x^0}P(\mu)$) in \cite[Ch.\,4, Thm.\,16.3]{Ladyzenskaja}.
However, those estimates can be transferred to estimates of $\partial_{x^0}P(\mu)$ by the fact that for general Green's functions $g_\mu$ it holds $g_\mu(t,x;\tau,x^0) = g_\mu^*(\tau,x^0;t,x)$,
the latter being the Green function of the adjoint PDE. %, for which the estimates on the derivatives in the variable $x_0$ hold by using the same methods. Therefore we obtain the estimate in the lemma from the estimates in \cite[Ch.\,4, Thm.\,16.3]{Ladyzenskaja}
(Note, though, that the estimates in \cite[Ch.\,4, Thm.\,16.3]{Ladyzenskaja} are explicitly stated only for Dirichlet boundary conditions.) %, whereas our problem is of the secon kind). Indeed, since estimating $G^*$ is sufficient and the coefficients of the adjoint equation do not depend on the derivatives of the drift, we need the drift only to be in $C^{0,\alpha}$ (the diffusion in contrast, needs to be in $C^{2,\alpha}$) \protect\cite[Ch.\, 1, Sec.\,8]{Friedman}.
Similar results can also be found in \cite[Sec.\,4, Thm.\,1]{Kalashnikov}. %Notice also that $G_0$ and $G_1$ are both time- and space-invariant under translation.
The estimates are usually derived for drift and reaction coefficients bounded in $C^{0,\lambda}(\overline\Omega)$,
which in our case is satisfied due to $L_\mu u=-\Delta(\frac{\sigma^2}2u)+\mu\cdot\nabla u+(\div\mu)u$ and due to the compact embedding $H^r(\Omega;\R^d)\hookrightarrow C^{1,\lambda}(\overline\Omega)$ for $\lambda<r-1-d/2$.

In fact one expects an estimate
$|\partial_t^a\partial_x^b\partial_{x^0}^cP(\mu)(t,x,x^0)|
\leq\hat C(\|\mu\|_{H^r(\Omega;\R^d)})t^{-\frac{2a+b+c+d}2}\exp\big(-C\tfrac{|x-x^0|^2}{t}\big)$
for any $2a+b+c$ small enough depending on $r$, but this would require some more work.
\end{remark}

As can be readily checked using
\begin{equation*}
\int_{\Omega}\exp\big(-C\tfrac{|x-x^0|^2}{t}\big)\,\d x
\leq\int_{\R^d}\exp\big(-C\tfrac{|x-x^0|^2}{t}\big)\,\d x
=\prod_{k=1}^d\int_{\R}\exp\big(-C\tfrac{|x_k-x^0_k|^2}{t}\big)\,\d x_k
\lesssim t^{d/2},
\end{equation*}
the previous lemma implies the uniform boundedness
\begin{equation}\label{eqn:forwardOperatorBounded}
\|P(\mu)\|_{L_{\alpha}^\infty L^2(\Omega\times\Omega)},
%\|\partial_tP(\mu)\|_{L_{\beta}^\infty L^2(\Omega\times\Omega)}, $\beta=\frac{d+2}4$
\|\partial_{x_0}P(\mu)\|_{L_{\beta}^\infty L^2(\Omega\times\Omega)}
\leq\tilde C
\end{equation}
for some $\tilde C<\infty$ and $\alpha=\frac d4$, $\beta=\frac{d+2}4$, if $\mu$ is bounded in $H^r(\Omega;\R^d)$.
Furthermore, these bounds imply the following estimate on the solution operator in low dimensions.

\begin{lemma}[Boundedness of solution operator]\label{thm:boundedness}
Let $d \leq 3$, %and $\alpha=d/4$.
$\mu \in {H^r(\Omega;\R^d)}$ for $r>\frac d2+1$, and $g\in L_\alpha^\infty L^2(\Omega;\R^d)\cap L_\beta^\infty H^1(\Omega;\R^d)$ % be defined for all $t \in (0,T]$
with $g \cdot \nu = 0$ on $\partial\Omega$. %, where $\nu$ is the normal vector to the boundary of $\Omega$.
Let $u$ solve $\partial_tu+L_\mu u=\div g$ in $\Omega$ with homogeneous initial and Neumann boundary conditions.
There exists $\bar C>0$ only depending on $\|\mu\|_{H^r(\Omega;\R^d)}$ and $\Omega$ such that $\|u\|_{L_0^\infty L^2(\Omega)}\leq\bar C\|g\|_{L_\alpha^\infty L^2(\Omega)}$.
\end{lemma}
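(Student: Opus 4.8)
The plan is to represent $u$ via Duhamel's formula and then exploit the divergence structure of the right-hand side, which buys half an order of smoothing. Denote by $S(\tau)$ the Neumann solution operator of the homogeneous equation $\partial_tw+L_\mu w=0$, whose integral kernel is precisely the Green function $P(\mu)(\tau,\cdot,\cdot)$; the solution with vanishing initial datum is then
\begin{equation*}
u(t)=\int_0^t S(t-s)\,\div g(s)\,\d s .
\end{equation*}
First I would record that $S(\tau)$ is bounded on $L^2(\Omega)$ with operator norm at most $e^{C\tau}$, where $C$ depends only on $\|\div\mu\|_{L^\infty}$: this is the energy identity for $\partial_tw+L_\mu w=0$, in which the drift contributes only $\tfrac12\int(\div\mu)w^2$ after integration by parts (the boundary term vanishing since $\mu\cdot\nu=0$), and $\div\mu\in L^\infty$ follows from $H^r\hookrightarrow C^{1,\lambda}$ for $r>1+\tfrac d2$. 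In particular this constant, and every constant below, depends only on $\|\mu\|_{H^r}$ and $\Omega$.

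The crucial ingredient is a smoothing estimate for $S(\tau)$ applied to a divergence. Moving the derivative from $g$ onto the kernel and using $g\cdot\nu=0$ on $\partial\Omega$ to discard the boundary term yields
\begin{equation*}
\big(S(\tau)\,\div g\big)(x)=-\int_\Omega \partial_{x^0}P(\mu)(\tau,x,y)\cdot g(y)\,\d y ,
\end{equation*}
where $\partial_{x^0}P(\mu)$ is the gradient of the Green function in its source variable. By \cref{thm:EstimatesFundamentalSolution} this kernel is dominated by $G_1(\tau,x-y)$, so (extending $g$ by zero outside $\Omega$) Young's convolution inequality on $\R^d$ gives
\begin{equation*}
\|S(\tau)\,\div g\|_{L^2(\Omega)}\le \|G_1(\tau,\cdot)\|_{L^1(\R^d)}\,\|g\|_{L^2(\Omega)}\lesssim \tau^{-1/2}\|g\|_{L^2(\Omega)} ,
\end{equation*}
since $\|G_1(\tau,\cdot)\|_{L^1(\R^d)}=\hat C(\|\mu\|_{H^r})\,\tau^{-(d+1)/2}\int_{\R^d}\exp(-C|z|^2/\tau)\,\d z\lesssim\tau^{-1/2}$.

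Inserting this into Duhamel's formula and using $\|g(s)\|_{L^2}\le s^{-\alpha}\|g\|_{L_\alpha^\infty L^2}$ reduces the estimate to a scalar time integral,
\begin{equation*}
\|u(t)\|_{L^2}\lesssim \|g\|_{L_\alpha^\infty L^2}\int_0^t (t-s)^{-1/2}\,s^{-\alpha}\,\d s = \mathrm B\big(\tfrac12,1-\alpha\big)\,t^{1/2-\alpha}\,\|g\|_{L_\alpha^\infty L^2} ,
\end{equation*}
whose Beta-function factor is finite because $\alpha=\tfrac d4<1$. It then remains to control $t^{1/2-\alpha}$ uniformly for $t\in(0,T)$ and take the supremum to conclude $\|u\|_{L_0^\infty L^2}\le\bar C\|g\|_{L_\alpha^\infty L^2}$.

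The main obstacle is precisely the short-time behaviour of this integral: near $s=0$ the source is singular like $s^{-\alpha}$, while near $s=t$ the divergence-smoothing kernel is singular like $(t-s)^{-1/2}$, and the two must combine to something controlled uniformly as $t\to0$. This is where the dimensional restriction enters, through $\alpha=\tfrac d4$, and where the second hypothesis $g\in L_\beta^\infty H^1$ becomes useful: on the near-diagonal part $s\in(t/2,t)$ one may trade the smoothing estimate for the crude bound $\|S(\tau)\,\div g(s)\|_{L^2}\le e^{C\tau}\|\div g(s)\|_{L^2}\lesssim s^{-\beta}\|g\|_{L_\beta^\infty H^1}$ coming from the $L^2$-boundedness of $S(\tau)$, splitting the integral at $s=t/2$ and handling each regime with whichever estimate keeps the exponent integrable. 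Verifying that the exponents $\tfrac12-\alpha$ and $1-\beta$ produced by the two regimes remain admissible — equivalently, controlling the $t\to0$ corner — is the delicate point on which the argument hinges.
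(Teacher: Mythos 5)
Your proposal follows the paper's proof essentially line by line: Duhamel's representation, integration by parts to move the divergence onto the source variable of the Green function (using $g\cdot\nu=0$ to kill the boundary term), the Gaussian bound $|\partial_{x^0}P(\mu)|\leq G_1$ from \cref{thm:EstimatesFundamentalSolution}, Young's convolution inequality with $\|G_1(\tau,\cdot)\|_{L^1(\R^d)}\lesssim\tau^{-1/2}$, and finally the Beta-function integral $\int_0^t s^{-\alpha}(t-s)^{-1/2}\,\d s=\mathrm B(\tfrac12,1-\alpha)\,t^{1/2-\alpha}$. Up to that point everything you write is correct and is exactly what the paper does.

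The one step you leave open --- uniform control of $t^{1/2-\alpha}$ as $t\to0$ --- is indeed the only delicate point, and you are right to flag it. For $d\leq2$ it is immediate, since $\alpha=\tfrac d4\leq\tfrac12$ gives $t^{1/2-\alpha}\leq T^{1/2-\alpha}$. For $d=3$ one has $\alpha=\tfrac34$ and the factor $t^{-1/4}$ is \emph{not} uniformly bounded; the paper's own proof passes over this by appealing only to $\alpha<1$, which guarantees finiteness of the integral for each fixed $t$ but not the uniform bound needed for $\|u\|_{L_0^\infty L^2}$. Be aware, however, that your proposed repair does not close this corner: the asserted conclusion bounds $\|u\|_{L_0^\infty L^2}$ by $\|g\|_{L_\alpha^\infty L^2}$ alone, so a bound involving $\|g\|_{L_\beta^\infty H^1}$ would prove a different (weaker) statement, and in any case with $\beta=\tfrac{d+2}4=\tfrac54>1$ for $d=3$ the near-endpoint contribution $\int_{t/2}^t s^{-\beta}\,\d s\sim t^{1-\beta}$ also degenerates as $t\to0$. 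So your write-up matches the paper's argument and is, if anything, more candid about where it is thin; just do not present the $L_\beta^\infty H^1$ splitting as the resolution.
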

\begin{proof}
\todo[inline,disable]{turn below into a proof, do we prefer $\div$ or $\nabla \cdot$ ? Is $w$ defined in this way regular enough to put the norm inside the integral? Is the Langrangian correct with this pairing and can we assume $p \in H^1$?}
%   At this point we need a  fact about stability of the operator $ \left( \partial_t+L_\mu \right)^{-1}$ which is the source-to-solution operator of \eqref{weak form u tilde} and acts from  % $L^\infty_\alpha(H^1(\Omega)^*)$ to $Y^P$ mapping the right-hand side of the equation to its solution and is linear if we assume trivial initial conditions. Consider the following estimate:
% 	\begin{align} \label{stability estimate}
% 		\left\| \Tilde{u} \right\|_{L^\infty_\alpha(L^2(\Omega))}
% 		\lesssim \left\| \left( \partial_t+L_\mu \right) \Tilde{u} \right\|_{L^\infty_\alpha(H^1(\Omega)^*)}.
% 	\end{align}
%	Let us see that this estimate holds in low dimensions. In fact, since $\Tilde{u}$ solves \eqref{weak form u tilde} we know $\left( \partial_t+L_\mu \right) \Tilde{u} = \div (h\  P(\mu+h)) = \div % (hu_{\mu +h}) \eqdef \div g$. Recall the fact that the drift is assumed to have vanishing normal component at the boundary and thus $g \cdot \nu$ on $\partial\Omega$ as well.
	By Duhamel's principle,
	$
		u(t,x) = \int_0^t \int_\Omega P(\mu)(t-\tau,x,x^0) \div_{x^0} g (\tau,x^0) \,\d x^0 \, \d\tau
	$
	so that
	\begin{align} \label{u tilde has bounded L^2 norm}
		\| u(t,\cdot)\|_{L^2(\Omega)}
		&=  \left\| \int_0^t \int_\Omega  P(\mu)(t-\tau,\cdot,x^0) \div_{x^0} g (\tau,x^0)  \,\d x^0 \ \d\tau \right\|_{L^2(\Omega)} \nonumber\\
		&=  \left\| \int_0^t \int_\Omega \partial_{x^0} P(\mu)(t-\tau,\cdot,x^0) g (\tau,x^0)  \,\d x^0 \ \d\tau \right\|_{L^2(\Omega)} \nonumber\\
		&\leq  \left\| \int_0^t \int_\Omega | G_1(t-\tau,\cdot-x^0) | | g (\tau,x^0) | \,\d x^0 \ \d\tau \right\|_{L^2(\Omega)} \nonumber\\
		&=  \left\| \int_0^t | G_1(t-\tau,\cdot)| \ast | g (\tau,\cdot) | \ \d\tau \right\|_{L^2(\Omega)} \nonumber\\
		&\leq   \int_0^t  \left\| |G_1(t-\tau,\cdot)| \ast |g (\tau,\cdot)| \right\|_{L^2(\Omega)} \ \d\tau \nonumber \\
		&\leq    \int_0^t  \left\| G_1(t-\tau,\cdot) \right\|_{L^1(\R^d)} \left\|  g (\tau,\cdot) \right\|_{L^2(\Omega;\R^d)} \ \d\tau \nonumber \\
% 		&\leq   \int_0^t \frac{\tau^\alpha}{\tau^\alpha} \left\|G_1(t-\tau,\cdot)
% 		 \right\|_{L^1(\R^d)} \left\|  g (\tau,\cdot) \right\|_{L^2(\Omega)} \d\tau \nonumber\\
		&\leq \left\|  g\right\|_{L^\infty_\alpha L^2(\Omega;\R^d)} \int_0^T \tau^{-\alpha} \left\|G_1(t-\tau,\cdot) \right\|_{L^1(\R^d)} \ \d\tau\nonumber\\
    &=\hat C(\|\mu\|_{H^r(\Omega;\R^d)})\left(\frac{2\pi}C\right)^{d/2}\left\|  g \right\|_{L^\infty_\alpha L^2(\Omega;\R^d)}\int_0^t \tau^{-\alpha} \frac1{\sqrt{t-\tau}}\,\d\tau\nonumber\\
    &=\tilde C\|  g \|_{L^\infty_\alpha L^2(\Omega;\R^d)},
	\end{align}
	where we used integration by parts, lemma \ref{thm:EstimatesFundamentalSolution}, the translation invariance of $G_1$, Young's convolution inequality, and $\alpha=\frac d4<1$.
% 	We proceed using that $d \leq 3$ and that $\alpha = \frac{d}{4}$, then again by lemma \ref{thm:EstimatesFundamentalSolution} we get
% 	\begin{align}
% 		\left\|G_1(t-s,\cdot) \right\|_{L^1(\Omega)} \leq \frac{\bar{C}}{(t-s)^{\frac{1}2}},
% 	\end{align}
% 	where the constant in the estimate depends on the drift $\mu$ as in the lemma and thus
% 	\begin{align} \label{L2 norm u tilde}
% 		\| u(t,\cdot)\|_{L^2(\Omega)}
% 		&\leq \bar{C}  \underbrace{ \int_0^t (t-s)^{-\frac{1}{2}} s^{-\frac{d}{4}} \ ds}_{< \infty} \left\|  g  \right\|_{L_\alpha^\infty (L^2(\Omega))} \nonumber\\
% 		&\leq \bar{C} \left\|  g  \right\|_{L_\alpha^\infty L^2(\Omega)}.
% 	\end{align}
% 	Multiplying both sides by $t^\alpha$ and taking the supremum between $0$ and $T$ essentially only affects the left side and yields
% 	\begin{align} \label{u tilde against g}
% 		\| u\|_{L^\infty_\alpha L^2(\Omega)} \leq \bar{C} \left\|  g  \right\|_{L_\alpha^\infty L^2(\Omega)}.
% 	\end{align}
\todo[inline,disable]{Following seems not needed:\\
Next note that in the above we can replace $g(\tau,\cdot)$ with any vector field $w(\tau,\cdot)$ having the same divergence as $g(\tau,\cdot)$,
in particular with $w(\tau,\cdot)=\nabla p$ for $p\in H^1(\Omega)$ the solution of $\Delta p=\div g(\tau,\cdot)$ with homogeneous Neumann boundary conditions (and vanishing average).
By the Lax--Milgram lemma there exists some $c>0$ depending only on $\Omega$ with
\begin{equation*}
\|w(\tau,\cdot)\|_{L^2(\Omega)}
=\|\nabla p\|_{L^2(\Omega)}
\leq\|p\|_{H^1(\Omega)}
\leq c\|\div g(\tau,\cdot)\|_{H^1(\Omega)^*},
\end{equation*}
thus in summary we arrive at $\|u(t,\cdot)\|_{L^2(\Omega)}\leq\tilde C\|w\|_{L^\infty_\alpha L^2(\Omega)}\leq\tilde Cc\|\div g(\tau,\cdot)\|_{L_\alpha(H^1(\Omega)^*)}$, as desired.
}
\end{proof}

We next show that operator $P$ is continuous with respect to the $L^\infty$-topology.

\begin{lemma}[Continuity of Green function operator]\label{thm:continuityForwardOp}
% 	The operator $P: (H^r(\Omega)^d \cap L^\infty(\Omega)^d \cap H^1_\nu(\Omega)^d,\|\cdot\|_{L^\infty}) \longrightarrow (Y^P,\| \cdot \|_{L^\infty_{\alpha}(L^2(\Omega))})$ is continuous for $d \leq 3$, $\alpha = \frac{d}{4}$ and $r > \frac{d}{2} + 1$.
Let $d \leq 3$ and $\mu \in H^r_{\mathrm N}$ for $r>\frac d2+1$.
There exists $\tilde C>0$ depending only on $\|\mu\|_{H^r(\Omega;\R^d)}$ such that
\begin{equation*}
(1-\tilde C\|h\|_{L^\infty})\|P(\mu+h)-P(\mu)\|_{L_\alpha^\infty L^2(\Omega\times\Omega)}
\leq\tilde C\|h\|_{L^\infty}\|P(\mu)\|_{L_\alpha^\infty L^2(\Omega\times\Omega)}
\qquad\text{for all }h\in H^r_{\mathrm N}.
\end{equation*}
\end{lemma}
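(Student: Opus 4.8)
The plan is to work with the difference $v:=P(\mu+h)-P(\mu)$ of the two Green functions and to show that it solves an inhomogeneous parabolic equation governed by the \emph{unperturbed} operator $L_\mu$, to which the solution-operator estimate of \cref{thm:boundedness} applies. Set $u:=P(\mu)$ and $w:=P(\mu+h)$, both well defined since $\mu,\mu+h\in H^r_{\mathrm N}\hookrightarrow C^{1,\lambda}(\overline\Omega)$; as Green functions they carry the same initial datum $\delta_{x^0}$, so $v$ has vanishing initial data. Using $L_{\mu+h}w=-\Delta(\tfrac{\sigma^2}2w)+\div((\mu+h)w)=L_\mu w+\div(hw)$ and subtracting $\partial_tu+L_\mu u=0$ from $\partial_tw+L_{\mu+h}w=0$ gives
\begin{equation*}
\partial_tv+L_\mu v=-\div(hw)
\qquad\text{in }[0,T]\times\Omega ,
\end{equation*}
with homogeneous initial and Neumann conditions, the variable $x^0$ entering only as a passive parameter carried along inside $w$. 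The forcing is of divergence type, $\div g$ with $g:=-hw$, and --- this is the one place the boundary condition $h\in H^r_{\mathrm N}$ is used --- its normal flux $g\cdot\nu=-(h\cdot\nu)\,w$ vanishes on $\partial\Omega$; moreover $\|g\|_{L_\alpha^\infty L^2}\le\|h\|_{L^\infty}\|w\|_{L_\alpha^\infty L^2}$, and $g$ inherits enough regularity from $w=P(\mu+h)$ (via the Green-function bounds of the preceding lemmas) for the Duhamel representation underlying \cref{thm:boundedness} to be valid.

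Next I would run the argument behind \cref{thm:boundedness} on this error equation. Representing $v$ by Duhamel's principle with kernel $P(\mu)$, integrating by parts in the second spatial slot (the boundary term dropping precisely because $h\cdot\nu=0$), and bounding that derivative by $G_1$ via \cref{thm:EstimatesFundamentalSolution} yields a convolution in $x$ against $G_1(t-\tau,\cdot)$ with the passive variable $x^0$ untouched. Applying Young's convolution inequality fibrewise in $x$ and then integrating in $x^0$ extends the $L^2(\Omega)$-estimate of \cref{thm:boundedness} verbatim to $L^2(\Omega\times\Omega)$. Since $\|G_1(t-\tau,\cdot)\|_{L^1(\R^d)}$ is a $\|\mu\|_{H^r}$-dependent constant times $(t-\tau)^{-1/2}$, and $\int_0^t(t-\tau)^{-1/2}\tau^{-\alpha}\,\d\tau$ is a constant multiple of $t^{1/2-\alpha}$ (finite because $\alpha=\tfrac d4<1$ for $d\le3$), multiplication by the weight $t^\alpha$ leaves the bounded factor $t^{1/2}\le T^{1/2}$. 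This gives, with a constant $\tilde C$ depending on the base drift $\mu$ only,
\begin{equation*}
\|v\|_{L_\alpha^\infty L^2(\Omega\times\Omega)}\le\tilde C\,\|h\|_{L^\infty}\,\|w\|_{L_\alpha^\infty L^2(\Omega\times\Omega)} .
\end{equation*}

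Finally I would close the argument by the triangle inequality $\|w\|_{L_\alpha^\infty L^2}\le\|P(\mu)\|_{L_\alpha^\infty L^2}+\|v\|_{L_\alpha^\infty L^2}$, which, substituted into the previous bound and with the $\|v\|$-term moved to the left, is exactly the claim
\begin{equation*}
(1-\tilde C\|h\|_{L^\infty})\,\|P(\mu+h)-P(\mu)\|_{L_\alpha^\infty L^2(\Omega\times\Omega)}\le\tilde C\|h\|_{L^\infty}\,\|P(\mu)\|_{L_\alpha^\infty L^2(\Omega\times\Omega)} .
\end{equation*}
Keeping the factor $(1-\tilde C\|h\|_{L^\infty})$ on the left is what lets the statement hold without any a priori smallness assumption on $h$.

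I expect the genuine difficulty to lie not in any single estimate but in correctly setting up and justifying the error equation: one must check that $v$ is the \emph{unique} solution of the inhomogeneous problem driven by the unperturbed $L_\mu$ with zero initial data (so that Duhamel with kernel $P(\mu)$ is legitimate), that the perturbation really appears in divergence form with vanishing normal flux --- where $h\in H^r_{\mathrm N}$ enters twice, once so that $L_{\mu+h}w-L_\mu u$ closes into $L_\mu v+\div(hw)$ and once for the boundary term in the integration by parts --- and, crucially for the constant, that the dependence on the perturbed drift $\mu+h$ sits entirely inside $\|w\|_{L_\alpha^\infty L^2}$ and is subsequently removed by the triangle inequality, leaving $\tilde C$ a function of $\|\mu\|_{H^r}$ alone.
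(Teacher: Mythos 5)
Your proposal is correct and follows essentially the same route as the paper: derive the error equation $\partial_t v+L_\mu v=-\div(hP(\mu+h))$ with homogeneous initial and Neumann data, apply the solution-operator bound of \cref{thm:boundedness} with $g=-hP(\mu+h)$ (using $h\cdot\nu=0$ for the admissibility of $g$), and close with the triangle inequality. Your explicit remark that the Duhamel/Young estimate should be run fibrewise in $x$ and then integrated in $x^0$ to pass from $L^2(\Omega)$ to $L^2(\Omega\times\Omega)$ is a point the paper leaves implicit, but it does not change the argument.
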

\begin{proof}
For fixed $x^0$ define $u_h(t,x)\defeq P(\mu+h)(t,x,x^0)$ and $\tilde u_\mu^h(t,x)\defeq[P(\mu+h)-P(\mu)](t,x,x^0)$, then $\tilde u_\mu^h$ satisfies
\begin{equation*}
\partial_t\tilde u_\mu^h+L_\mu\tilde u_\mu^h=-\div(hu_h).
\end{equation*}
with homogeneous initial and Neumann boundary conditions.
Due to $u_h\in L_\alpha^\infty L^2(\Omega)\cap L_\beta^\infty H^1(\Omega)$ by \eqref{eqn:forwardOperatorBounded} and $r>1+\frac d2$ we can apply lemma \ref{thm:boundedness} with $g=-hu_h$ to obtain
\begin{equation*}
\|\tilde u_\mu^h\|_{L_0^\infty L^2(\Omega)}\leq\bar C\|hu_h\|_{L_\alpha^\infty L^2(\Omega)}\leq\bar C\|h\|_{L^\infty(\Omega)}\|u_h\|_{L_\alpha^\infty L^2(\Omega)},
\end{equation*}
which for $\tilde C=\bar CT^\alpha$ implies the desired inequality
\begin{multline*}
\|P(\mu+h)-P(\mu)\|_{L_\alpha^\infty L^2(\Omega\times\Omega)}
\leq\tilde C\|h\|_{L^\infty(\Omega)}\|P(\mu+h)\|_{L_\alpha^\infty L^2(\Omega\times\Omega)}\\
\leq\tilde C\|h\|_{L^\infty(\Omega)}[\|P(\mu)\|_{L_\alpha^\infty L^2(\Omega\times\Omega)}+\|P(\mu+h)-P(\mu)\|_{L_\alpha^\infty L^2(\Omega\times\Omega)}].
\qedhere
\end{multline*}
\end{proof}

We are finally in the position to derive differentiability of $P$ with respect to the $L^\infty$-topology, which implies the desired tangential cone condition.

\begin{proposition}[Differentiability and tangential cone condition for Green function operator]\label{thm:weakTangentialConeCondition}
Let $d \leq 3$ and $\mu \in H^r_{\mathrm N}$ for $r>\frac d2+1$.
Define the operator $P'(\mu):H^r_{\mathrm N}\to L_\alpha^\infty L^2(\Omega\times\Omega)\cap L_\beta^\infty H^1(\Omega\times\Omega)$ via
\begin{equation*}
P'(\mu)[h](t,x,x^0)=u_\mu^h(t,x)
\qquad\text{for $u_\mu^h$ the solution of}\qquad
\partial_tu_\mu^h+L_\mu u_\mu^h=-\div(hP(\mu)(\cdot,\cdot,x^0))
\end{equation*}
with homogeneous initial and Neumann boundary conditions,
then the tangential cone condition \eqref{eqn:weakTangentialConeCondition} (whose right-hand side is $o(\|h\|_{L^\infty(\Omega)})$ due to lemma \ref{thm:continuityForwardOp}) holds for $\mu^\dagger=\mu$.
\end{proposition}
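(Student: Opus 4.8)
The plan is to reduce the statement to \cref{thm:boundedness} by identifying the parabolic equation satisfied by the linearization residual, mirroring the proof of \cref{thm:continuityForwardOp}. Fix $x^0\in\Omega$ and abbreviate $u_h(t,x)\defeq P(\mu+h)(t,x,x^0)$, $v(t,x)\defeq P(\mu)(t,x,x^0)$ and $w(t,x)\defeq P'(\mu)[h](t,x,x^0)$. Using $L_{\mu+h}u=L_\mu u+\div(hu)$, the defining relations $\partial_tu_h+L_{\mu+h}u_h=0$ and $\partial_tv+L_\mu v=0$ show that $\tilde u\defeq u_h-v$ solves $\partial_t\tilde u+L_\mu\tilde u=-\div(hu_h)$ (as already exploited in \cref{thm:continuityForwardOp}), whereas $w$ solves $\partial_tw+L_\mu w=-\div(hv)$ by definition of $P'(\mu)$. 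Subtracting, the residual $R\defeq\tilde u-w=[P(\mu+h)-P(\mu)-P'(\mu)[h]](\cdot,\cdot,x^0)$ obeys
\begin{equation*}
\partial_tR+L_\mu R=-\div(h\tilde u)
\end{equation*}
with homogeneous initial and Neumann conditions. The essential observation is that this is structurally identical to the equation for $\tilde u$ in \cref{thm:continuityForwardOp}, merely with the source datum $u_h$ replaced by $\tilde u=P(\mu+h)-P(\mu)$.

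I would next check that $g\defeq-h\tilde u$ satisfies the hypotheses of \cref{thm:boundedness} so that the lemma applies with drift $\mu$. The normal trace vanishes, $g\cdot\nu=-\tilde u\,(h\cdot\nu)=0$ on $\partial\Omega$, since $h\in H^r_{\mathrm N}$. Moreover $P(\mu),P(\mu+h)\in L_\alpha^\infty L^2(\Omega\times\Omega)\cap L_\beta^\infty H^1(\Omega\times\Omega)$ by \eqref{eqn:forwardOperatorBounded} (together with the analogous bound on $\partial_xP$ obtained from the adjoint symmetry of the Green function), so that $\tilde u=P(\mu+h)-P(\mu)$ lies in the same space, and the embedding $H^r(\Omega;\R^d)\hookrightarrow C^{1,\lambda}(\overline\Omega)$ guarantees that the product $h\tilde u$ does too. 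Crucially, this qualitative regularity holds for every $h\in H^r_{\mathrm N}$, while the quantitative constant produced by \cref{thm:boundedness} depends only on $\|\mu\|_{H^r}$ (the lemma being applied with drift $\mu$), exactly as required. The same reasoning applied to the source $-hP(\mu)$ shows that $P'(\mu)[h]$ is well defined in the asserted codomain, with the $L_\beta^\infty H^1$-regularity obtained from the analogous gradient estimate on $P(\mu)$.

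Applying \cref{thm:boundedness} to $R$ then yields
\begin{equation*}
\|R\|_{L_\alpha^\infty L^2(\Omega\times\Omega)}\leq\tilde C\,\|h\|_{L^\infty(\Omega)}\,\|\tilde u\|_{L_\alpha^\infty L^2(\Omega\times\Omega)}=\tilde C\,\|h\|_{L^\infty(\Omega)}\,\|P(\mu+h)-P(\mu)\|_{L_\alpha^\infty L^2(\Omega\times\Omega)},
\end{equation*}
which is precisely \eqref{eqn:weakTangentialConeCondition}. By \cref{thm:continuityForwardOp} the right-hand side is $O(\|h\|_{L^\infty}^2)=o(\|h\|_{L^\infty})$, so $R$ is of higher order and $P'(\mu)$ is indeed the Fr\'echet derivative of $P$, as claimed.

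The only step demanding genuine care is the passage to the mixed norm $L_\alpha^\infty L^2(\Omega\times\Omega)$ in the last display: \cref{thm:boundedness} is formulated for fixed $x^0$, and naively integrating its conclusion over $x^0$ fails because the supremum in $t$ does not commute favourably with the $L^2$-integration in $x^0$. Instead I would repeat the Duhamel estimate underlying \cref{thm:boundedness} while carrying $x^0$ as an inert parameter, invoking Minkowski's integral inequality to move the $L^2_{x^0}$-norm inside the time convolution before applying Young's convolution inequality with $\|G_1(t-\tau,\cdot)\|_{L^1(\R^d)}$. This reproduces the scalar argument verbatim with $L^2(\Omega\times\Omega)$ in place of $L^2(\Omega)$, and the integrability of $\int_0^t\tau^{-\alpha}\|G_1(t-\tau,\cdot)\|_{L^1}\,\d\tau$ is again ensured by $\alpha=\tfrac d4<1$ for $d\leq3$.
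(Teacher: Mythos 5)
Your proof is correct and follows essentially the same route as the paper: identify the parabolic equation $\partial_tR+L_\mu R=-\div\bigl(h(P(\mu+h)-P(\mu))\bigr)$ satisfied by the linearization residual and apply lemma \ref{thm:boundedness} with drift $\mu$ and source $g=-h\tilde u$. Your closing observation is in fact a genuine refinement: the paper dismisses the passage from fixed $x^0$ to the $L^2(\Omega\times\Omega)$-norm with the phrase ``upon squaring and integrating in $x^0$'', which, applied literally to the final per-$x^0$ inequality, yields $\int_\Omega\sup_t(\cdots)^2\,\d x^0$ on the right rather than the required $\sup_t\int_\Omega(\cdots)^2\,\d x^0$; your fix --- squaring and integrating at the level of the Duhamel formula and invoking Minkowski's integral inequality before Young's --- is exactly what makes that step rigorous.
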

\begin{proof}
For fixed $x^0$ let again $\tilde u_\mu^h(t,x)=[P(\mu+h)-P(\mu)](t,x,x^0)$, then $\tilde u(t,x)=[P(\mu+h)-P(\mu)-P'(\mu)[h]](t,x,x^0)$ satisfies
\begin{equation*}
\partial_t\tilde u+L_\mu\tilde u=-\div(h\tilde u_\mu^h)
\end{equation*}
with homogeneous initial and Neumann boundary conditions.
Again appealing to lemma \ref{thm:boundedness} (which is possible due to $u_\mu^h\in L_\alpha^\infty L^2(\Omega)\cap L_\beta^\infty H^1(\Omega)$ by \eqref{eqn:forwardOperatorBounded} and $r>1+\frac d2$) we obtain
\begin{equation*}
T^{-\alpha}\|\tilde u\|_{L_\alpha^\infty L^2(\Omega)}
\leq\|\tilde u\|_{L_0^\infty L^2(\Omega)}
\leq\bar C\|h\tilde u_\mu^h\|_{L_\alpha^\infty L^2(\Omega)}
\leq\bar C\|h\|_{L^\infty}\|\tilde u_\mu^h\|_{L_\alpha^\infty L^2(\Omega)},
\end{equation*}
which (upon squaring and integrating in $x^0$) directly implies the desired inequality.
\end{proof}

\section{Numerical experiments}\label{sec:numerics}
In the following, we perform numerical experiments to empirically validate the existence of a convergence rate in \cref{convergence rates Dunker Expectation} -- in fact, we will observe a rate $O(n^{-1/2})$.
For this we perform a series of simulations that model motion of $n$ particles in a one-dimensional domain $\Omega$ over time $t \in [0,1]$.
At the beginning of the simulation for time $t=0$ the particles are uniformly distributed in $[0,1] \subset \Omega$.
Motion of each particle is (independently) governed by the SDE \eqref{eqn:SDE}, in which we express the drift $\mu$ as the superposition of a known spatially constant flux $u(x)=5$ and the gradient of a potential $\Phi\colon \Omega \rightarrow \R$,
\begin{equation*}
\mu=u+\nabla\Phi
\end{equation*}
(this is allowed since in one space dimension there is a one-to-one relation between potentials and drifts).
% Here, $x_i \colon [0,1] \rightarrow \Omega$ is the position of particle $i \in \lbrace1,\ldots,n\rbrace$ at time point $t \in [0,1]$, $u \colon [0,1] \times \Omega \rightarrow \R$ is a drift term that can vary over time and space.
% The function $\Phi \colon [0,1] \times \Omega \rightarrow \R$ is a potential that can attract or repel particles in $\Omega$, while $W_i \colon [0,1] \rightarrow \R$ models the stochasticity of each particle via Brownian motion.
Our goal in this numerical experiment is to infer the potential $\Phi$ from the observation of motion trajectories of the $n$ simulated particles.

We use the Euler--Maruyama (EM) method with $M$ time steps to discretize the SDE in time and track the position of the particles in our simulations.
For simplicity, in our simulations we assume that the observation times $t_0,\ldots,t_M$ coincide with these discretized time steps, \ie we have a large number of observation times.
The EM discretisation provides a direct way of computing the likelihood from one time step to the next.
Since we can compute the position $q_j$ of particle $j \in \lbrace 1,\ldots,n \rbrace$ for the next time step $k+1$ via
\begin{align*}
    q_j^{(i+1)} = q^{(i)}_j %+ \Delta t \alpha u(t, q^{(i)}_j)
    + \Delta t\mu(q^{(i)}_j)+ \sigma \sqrt{\Delta t}\xi,
\end{align*}
with $\xi\sim\mathcal N(0,1)$ normally distributed,
we know that for fixed $\sigma$ and potential function $\Phi$ it holds
\begin{align*}
    \left.q_j^{(i+1)}\right| q^{(i)}_j \ \sim \ \mathcal N\left( q^{(i)}_j %+ \Delta t \alpha u(t, q^{(i)}_j)
    + \Delta t\mu(q^{(i)}_j), \: \sigma^2\Delta t \right).
\end{align*}
As described in the previous sections, our parameter estimation is based on the fidelity term $S_\tau$, the log-likelihood of the observations, shifted by some $\tau>0$.
In practice, we actually use $\tau=0$, \ie we take $S$ as our fidelity term.
The likelihood of the discrete motion trajectories can practically be calculated by multiplying the above densities for each time step sequentially, using the Markovian property of the underlying SDE and its discretisation.
In practice, we directly use the log-likelihood (and summation) to avoid numerical underflow and enable us to evaluate the likelihood for a fixed choice of parameters and a given potential.
This approach has the benefit that these parameters and functions can be used within a standard optimisation procedure to infer the potential $\Phi$ from the observed trajectories.

As in the previous sections, we assume $\sigma$ to be known.
% Since we are mainly interested in empirically validation of the convergence rate stated in this paper, we assume the parameters $\alpha$ and $\beta$ to be known.
% For simplicity we fix the drift term $u$ to be an external constant flow from left to right, leaving only the problem of inferring the potential $\Phi$.
As ground truth $\Phi^\dagger$ of the potential we choose a simple double-well potential, discretised in Fourier space.
Naturally, we perform optimisation during inference of $\Phi$ in the space of Fourier modes.
Note that in this numerical setup we do not need any additional regularisation since this simple one-dimensional problem is relatively well-posed.

In \cref{fig:numerics} we present the results of two particular numerical experiments simulating $n=12$ (left) and $n=120$ particles.
In the top row we show simulated trajectories for the uniformly distributed particles.
The $x$-axis shows the respective position $q_j \in \Omega$ of the $n$ particles, while the $y$-axis denotes the time $t \in [0,1]$.
Note that for the case $n=120$ (right) we used shades of grey to accumulate trajectories and highlighted only a few particular trajectories in colour for a better visualization.
In the second row we visualize the inferred and true potentials for both numerical experiments.
Peaks in the potential lead to repulsion of particles, while valleys in the potential will attract particles.
This can already be observed in the simulated trajectories in the top row.

While for the case $n=12$ (left) the inferred potential (orange curve) loosely follows the characteristics of the ground truth potential (blue curve), the inferred potential in the case $n=120$ (right) is much closer to the true potential.
\begin{figure}
    \centering
    \includegraphics[width=0.5\linewidth]{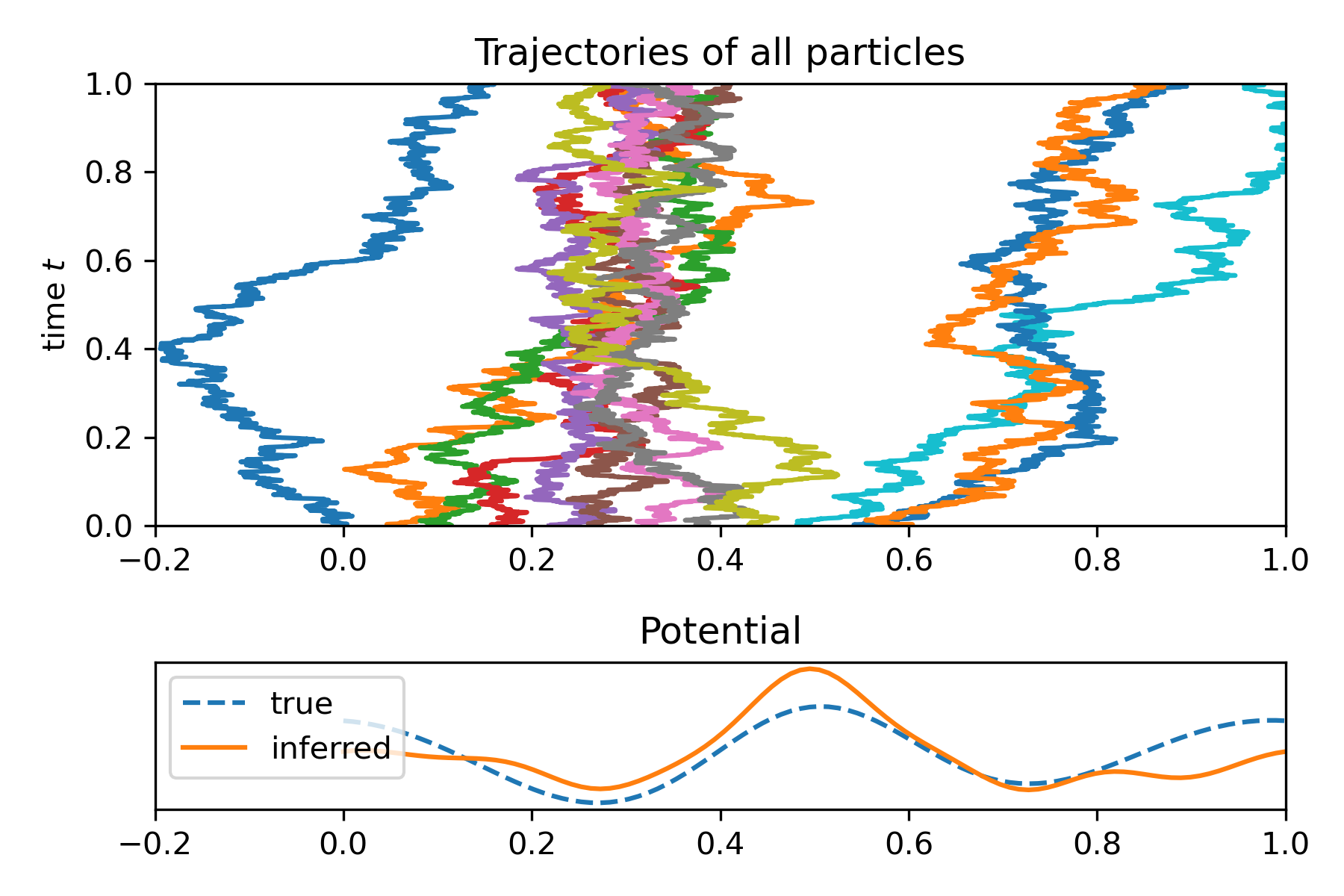}%
    \includegraphics[width=0.5\linewidth]{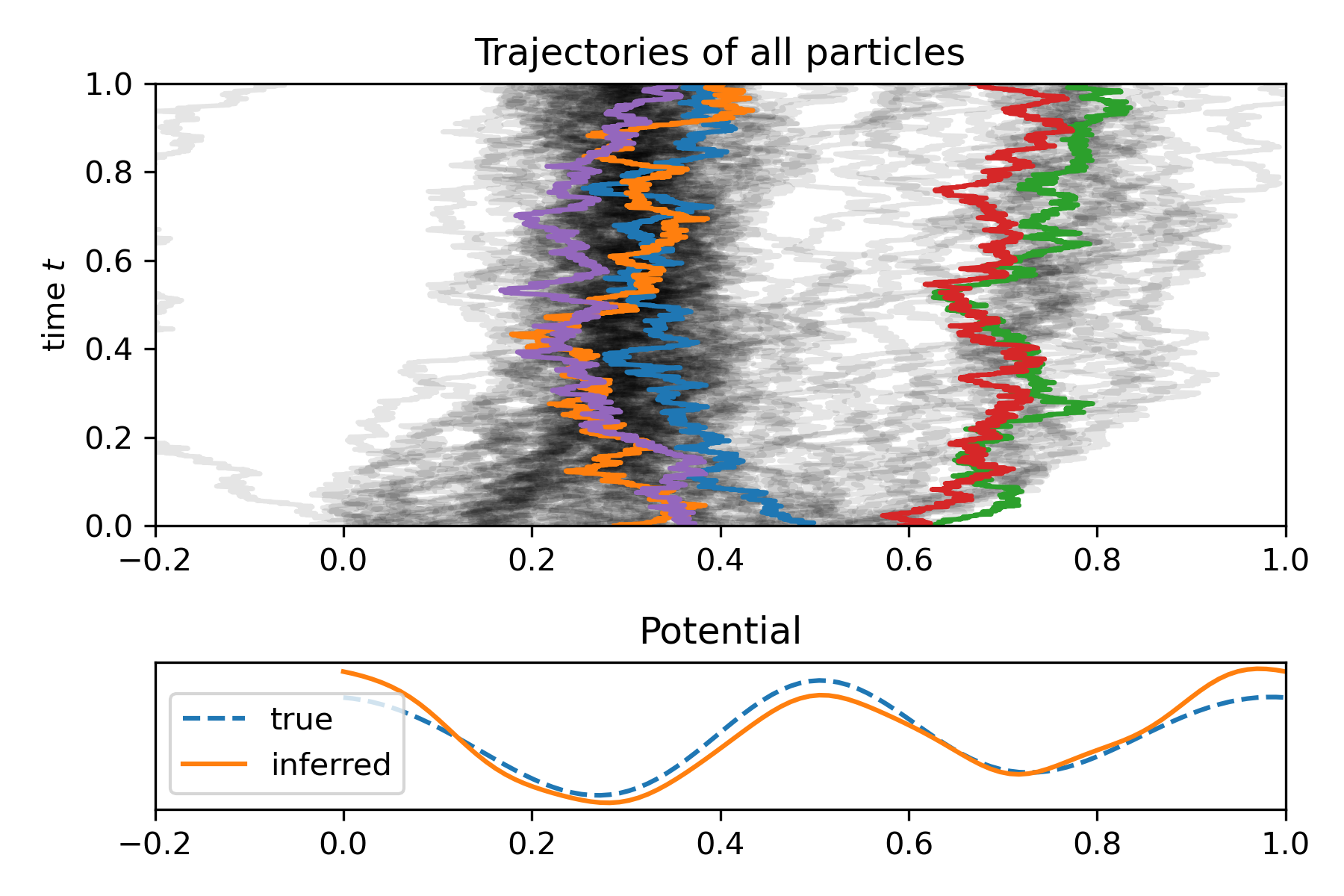}\\
    \caption{Left: Simulated trajectories (top) and potential inference (bottom) for $n=12$ particles. Right: Simulated trajectories (top) and potential inference (bottom) for $n=120$ particles. }
    \label{fig:numerics}
\end{figure}

To quantify the error between the inferred potential and the true underlying potential, we performed additional experiments in which the distance is measured in the $L^2$ norm.
In particular, we infer the potential $\Phi$ from the simulated particle trajectories for an increasing amount of particles $n=2^k$, for $k=3,\ldots,10$.
To take the stochasticity of the simulations into account, we perform $25$ independent experiments for each amount $n$ of particles and subsequently compute the mean value and variance of the computed $L^2$ errors.
\Cref{fig:convergence_rate} shows a box plot of the computed $L^2$ errors on a logarithmic scale.
Additionally, we plot two reference lines for different theoretical rates of convergence: the blue line shows a convergence rate of $\mathcal O(n^{-1/2})$, while the orange curve shows a convergence rate of $\mathcal O(n^{-1})$.
\begin{figure}
\centering
\setlength\unitlength\linewidth
\begin{picture}(.72,.33)
\put(.02,.02){\includegraphics[width=0.7\unitlength,trim=30 30 10 10,clip]{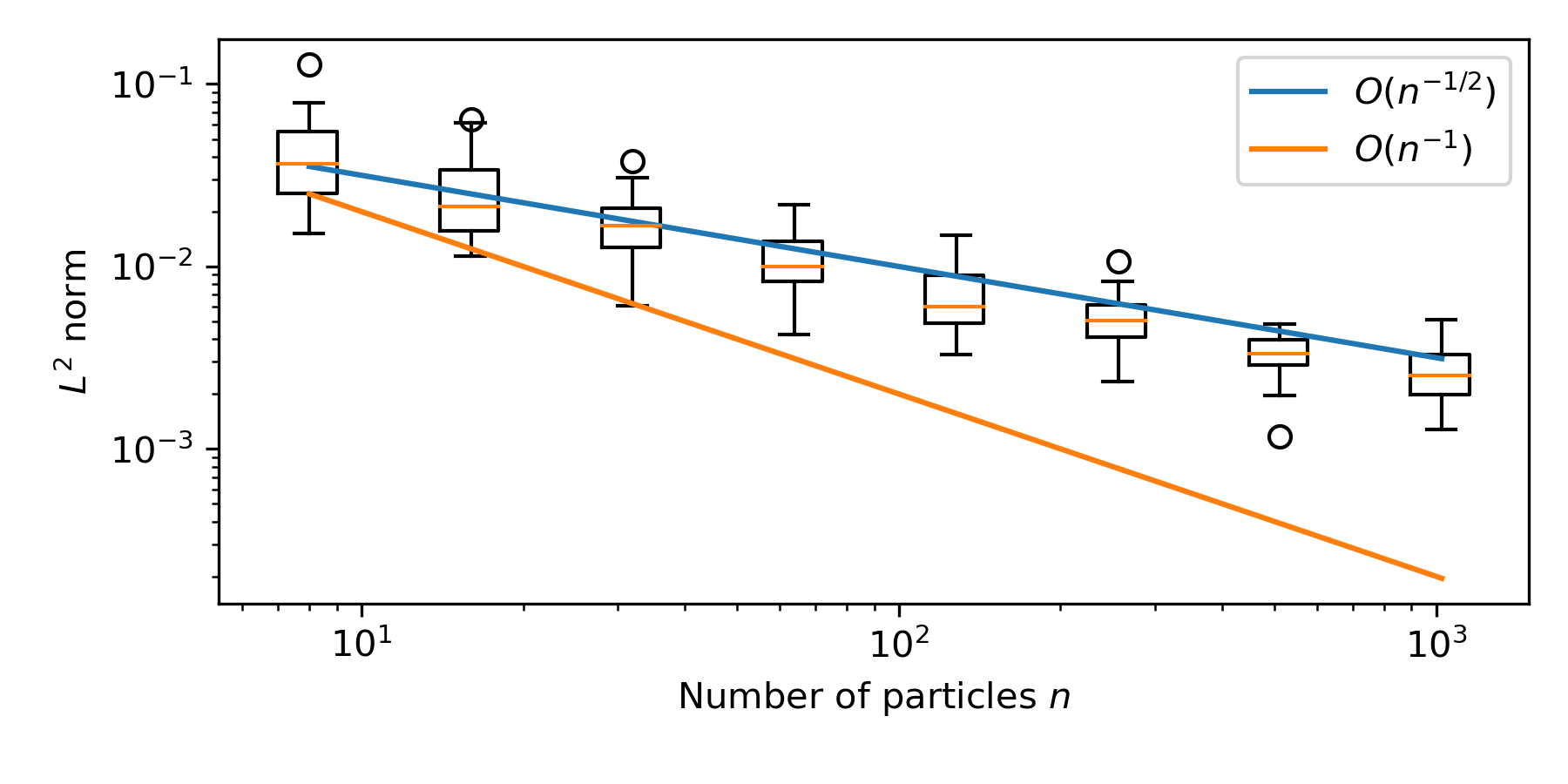}}
\put(.32,0){Number $n$ of particles}
\put(0,.13){\rotatebox{90}{$L^2$-norm error}}
\put(.63,.265){\color{white}\rule{8.5ex}{6ex}}
\put(.625,.288){\begin{minipage}{10ex}$O(n^{-1/2})$\\[0.1\baselineskip]$O(n^{-1})$\end{minipage}}
\end{picture}
  \caption{Error between ground truth and inferred potential in $L^2$ norm, as a function of number of particles tracked, visualised as a box plot over $25$ independent runs each. Asymptotic curves for $\mathcal O(n^{-1/2})$ and $\mathcal O(n^{-1})$ show that convergence is close to $\mathcal O(n^{-1/2})$.}
  \label{fig:convergence_rate}
\end{figure}
As can be observed, our numerical experiments exhibit a convergence rate of $\mathcal O(n^{-1/2})$.

    For our numerical experiments we worked in finite dimensions,
	\ie the (smooth) probability distribution of particles is governed by the only finitely many Fourier coefficients of the potential $\Phi$
	through the nonlinear, yet in finite dimensions smooth and injective forward operator.
	Essentially this also explains why numerically we did not require any regularization.
	In such a setting one expects an $L^2$-convergence rate of $\mathcal O(n^{-1/2})$ for approximating the (nonlinearly, but finite-dimensionally) parameterized probability distribution from $n$ empirical samples
	(and thus automatically the same rate for approximating the potential).
	In contrast, convergence \cref{convergence rates Dunker Expectation} for the truly infinite-dimensional setting does not come with an explicit rate.
	In fact, the rate in general depends on the smoothness of the ground truth,
	so an obvious question is whether our experimentally observed rate is really due to the regularity of the ground truth potential or rather an artefact of working with a finite-dimensional parameterization of the potential.
	In order to avoid the latter, above we employed more Fourier coefficients than actually necessary to represent the ground truth potential;
	however, a definitive answer would require more numerical testing of how the observed rate depends on the number of Fourier coefficients or on the ground truth regularity.
	
	Coming back to the application, the biological data from \cref{fig:embryo} consists of almost a thousand embryo recordings with each a handful of PGCs, accumulating to an order of $n=10,000$ observed cells.
	One could argue that, assuming a rate of $\mathcal O(n^{-1/2})$, this should yield a relative $L^2$-norm error two orders of magnitude smaller than the relative error obtained from observing a single cell,
	which one expects to be of order one.
	Given all other modelling uncertainties this accuracy seems sufficient.
	In fact, looking at \cref{fig:embryo}, one expects to be able to extract at least some useful information since already the final cell distribution looks quite characteristic.
	The obtainable spatial resolution, though, is difficult to predict and also depends on the amplitude of the ground truth potential:
	Indeed, from large deviation theory one knows that the time for a particle to move between two wells is exponential in the \emph{potential barrier} divided by $\sigma^2$.
	
	Our theoretical result \cref{thm:weakTangentialConeCondition} is as yet insufficient to obtain a convergence rate for the rconstruction of the potential from observing particles at discrete time points:
	Inside the norms, the supremum over all times would have to be replaced by evaluation at specific discrete time points.
	Since the norms are taken of solutions to parabolic equations, whose time-reversal is ill-posed, the time-dependent norm can unfortunately not be bounded in terms of the norm at the discrete time points.
	However, the structure of all involved estimates is to estimate the norm of the solution of the parabolic equation in terms of the norm of its right-hand side, which in turn is the solution of a closely related parabolic equation.
	It seems plausible that such estimates also hold at specific times (rather than in time-dependent norms), since the PDE solution and the PDE right-hand side smoothen at the same rate.
	In essence we expect that one needs a quantitative version of classical uniqueness results for identifying a drift or an initial condition from the PDE solution at a discrete time point.

\section*{acknowledgement}
This work was supported by the Deutsche Forschungsgemeinschaft (DFG, German Research Foundation)
via project 431460824 -- Collaborative Research Center 1450
and via Germany's Excellence Strategy project 390685587 -- Mathematics M\"unster: Dynamics-Geometry-Structure.

\vspace{\baselineskip}
%% The style of the following references should be used in all documents.

\bibliographystyle{plain}  
\bibliography{Bibfile}

\end{document}